\documentclass[oneside,notitlepage,12pt]{article}

\usepackage{amssymb}
\usepackage[leqno]{amsmath}
\usepackage{amsfonts}
\usepackage{amsopn}
\usepackage{amstext}
\usepackage{amsthm}
\usepackage[normalem]{ulem}

\usepackage{tikz}
\usetikzlibrary{arrows.meta}
\usetikzlibrary{graphs}
\usetikzlibrary{decorations.pathmorphing}

\usepackage{enumitem}

\usepackage{verbatim}
\usepackage[colorlinks, backref]{hyperref}

\usepackage{mathrsfs}
\usepackage{calrsfs}
\usepackage{clock} 

\usepackage{marvosym} 

\setlist{itemsep = 0pt}
\setlist[enumerate, 1]{label=\upshape (\arabic*), ref=(\arabic*)}
\setlist[enumerate, 2]{label=\upshape (\arabic{enumi}\alph*), ref=(\arabic{enumi}\alph*)}

\newtheorem{tw}{Theorem}[section]
\newtheorem{wn}[tw]{Corollary}
\newtheorem{lm}[tw]{Lemma}
\newtheorem{prop}[tw]{Proposition}

\theoremstyle{definition}
\newtheorem{df}[tw]{Definition}

\newtheorem{question}[tw]{Question}
\theoremstyle{remark}
\newtheorem{uwgi}[tw]{Remark}
\newtheorem{observation}[tw]{Observation}
\newtheorem{strategy}[tw]{Strategy}

\newcommand{\green}[1]{{\color{green}{#1}}}

\newcommand \set [1]{\{#1\}}
\newcommand \seq [1]{\langle #1 \rangle}
\newcommand \map [3]{#1\colon #2 \to #3} 
\newcommand \maps {\colon} 
\newcommand \im [1]{[#1]} 

\newcommand \from {\leftarrow} 
\newcommand \restr [1] {\mathord{\upharpoonright}_{#1}}
\newcommand \id [1]{{\operatorname{id}_{#1}}}
\newcommand \dom {\operatorname{dom}}
\newcommand \rng {\operatorname{rng}}

\newcommand \cmp {\circ} 
\newcommand \subs {\subseteq}

\newcommand \card [1] {\lvert#1\rvert} 

\newcommand \sig {\sigma}
\newcommand \eps {\varepsilon}

\newcommand \N {\mathbb{N}}
\newcommand \Z {\mathbb{Z}}
\newcommand \Q {\mathbb{Q}}
\newcommand \R {\mathbb{R}}
\newcommand \Qpos {\Q^{\geq 0}}
\newcommand \Rpos {\R^{\geq 0}}

\newcommand \U {\mathbb{U}}
\newcommand \Su {\mathscr{S}}
\newcommand \C {\mathscr{C}}
\newcommand \D {\mathscr{D}}
\newcommand \K {\mathscr{K}}

\newcommand \fin {{\operatorname{fin}}}

\newcommand \Lin {\mathfrak{L}}
\newcommand \Ult {\mathfrak{U}}
\newcommand \UltIso {\mathfrak{I}}
\newcommand \UltConv {\Ult^\prec}

\newcommand \tp {{\rm{tp}}}

\newcommand \Aut {\operatorname{Aut}}
\newcommand \Iso {\operatorname{Iso}}

\let \aut \Aut

\newcommand \email [1] {%
    \quad{\Letter\,\small\href{mailto:#1}{\nolinkurl{#1}}}%
}

\newcommand \arxivlink [1]{%
    \href{https://arxiv.org/abs/#1}{\texttt{arXiv:#1}}%
}

\hypersetup{
	pdftitle = {Universal homogeneous two-sorted ultrametric spaces},
	pdfauthor = {Adam Bartoš, Wiesław Kubiś, Aleksandra Kwiatkowska, Maciej Malicki},
	colorlinks,
	linkcolor = [rgb]{0, 0, 0.7},
	citecolor = [rgb]{0, 0.7, 0},
	urlcolor = [rgb]{0.4, 0.2, 0},
	linktoc = section,
}

\title{Generic dc-automorphisms of two-sorted ultrametric spaces}
\author{
{\sc Adam Barto\v{s}}
    \email{bartos@math.cas.cz}\\
    {\small Institute of Mathematics, Czech Academy of Sciences (CZECHIA)}
\and
{\sc Wies{\l}aw Kubi\'s}
    \email{kubis@math.cas.cz}\\
    {\small Institute of Mathematics, Czech Academy of Sciences (CZECHIA)}
\and
{\sc Aleksandra Kwiatkowska}
    \email{aleksandra.kwiatkowska@math.uni.wroc.pl}\\
    {\small University of Wrocław (POLAND) and  University of M\"unster (GERMANY)}
\and
{\sc Maciej Malicki}
    \email{mmalicki@mimuw.edu.pl}\\
    {\small Faculty of Mathematics, Informatics and Mechanics, University of Warsaw (POLAND)}
}

\date{\clocktime\today}

\begin{document}

\maketitle

\begin{abstract}
We continue to study ultrametric spaces as two-sorted structures consisting of a set
of points and of a linearly ordered set of distances, together with the dc-embeddings, which we introduced in our earlier paper ``Universal homogeneous two-sorted ultrametric spaces''. The class of all finite two-sorted ultrametric spaces with dc-embeddings is Fraïssé whose limit we denote by $\U$.

The main result of the article is  that $\Aut(\U)$ has a comeager conjugacy class. For that we show the cofinal amalgamation property of partial automorphisms and characterize amalgamation bases. In fact we develop a general strategy for showing cofinal amalgamation property for a broad class of categories.

Furthermore, we show that there is no generic pair of automorphisms, we provide a detailed description of single orbits under dc-automorphisms, and we prove that any finite partial dc-automorphism, even in the presence of other
orbits, can be extended to one that is closed or monotone. 
\end{abstract}

\paragraph{Keywords:}
ultrametric spaces, Fraïssé theory, automorphism groups, generic automorphisms

\vspace{-2ex}
\paragraph{Mathematics Subject Classification (2020): }
54E35, 
03C50, 
20B27, 
18A22  

\color{black}

\vspace{-2ex}
\paragraph{Acknowledgments.} 
Research of Adam Bartoš and Wiesław Kubiś was supported by GAČR (Czech Science Foundation) grant EXPRO 20-31529X and by the Czech Academy of Sciences (RVO 67985840). Research of Aleksandra Kwiatkowska was supported by DFG (German Research Foundation) under Germany’s Excellence Strategy EXC 2044–390685587, Mathematics M\"{u}nster: Dynamics–Geometry–Structure. Research of Maciej Malicki was partially supported by the National Science Centre, Poland
under the Weave-UNISONO call in the Weave programme [grant no
2021/03/Y/ST1/00072].

\tableofcontents

\section{Introduction}

A metric space $(X,d)$ is called an {\em ultrametric space} if for all $x,y,z\in X$, the ultrametric triangle inequality holds: $d(x,y)\leq\max\{d(x,z), d(y,z)\}$. 
Consequently, every triangle is isosceles.
The {\em (open) ball} of center $x\in X$ and radius $r>0$ in an ultrametric space is the set $B_r(x)=\{y\in X\colon d(y,x)<r\}$.
Note that for any two balls $B$ and $C$, they are either disjoint ($B\cap C=\emptyset$) or one is contained in the other ($B\subseteq C$ or $C\subseteq B$). Moreover, every point in a ball can serve as its center.

In our previous paper~\cite{paper1}, we reframed ultrametric spaces as two-sorted structures consisting of a set of points and a linearly ordered set of distances, in order to construct a universal separable ultrametric space, which is impossible in the classical setting.
Instead of isometric embeddings, we used appropriate two-sorted embeddings called \emph{dc-embeddings} (for “distance-carrying”) as morphisms.
Furthermore, we initiated a systematic study of two-sorted ultrametric spaces, ways of representing them, methods of constructing universal objects, and properties of their automorphism groups.
In particular, we studied the generic countable two-sorted ultrametric space~$\U$.
We showed that the class $\Ult_\fin$ of all finite two-sorted ultrametric spaces is Fraïssé, and that its limit $\U$ can be identified with the rational Urysohn ultrametric space, the Fraïssé limit of the class $\mathfrak{C}_\fin$ of all classical finite rational ultrametric spaces.
Hence, $\U$ is homogeneous both with respect to isometries and dc-automorphisms.
We denote the respective Polish groups by $\Iso(\U)$ and $\Aut(\U)$.

In the present paper we are interested in the existence of generic $n$-tuples of dc-automorphisms of $\U$.
They behave quite differently than isometries.
Unlike an isometry, a dc-automorphism can move distances around.
In fact, we showed~\cite[Theorem~4.4]{paper1} that $\Aut(\U)$ is canonically isomorphic to the topological semidirect product $\Iso(\U) \rtimes \Aut(\Qpos)$, involving the automorphism group of the linear order on the rationals.

The situation with generic isometries is settled – by the result of Malicki~\cite[Corollary~6.1]{Mal}, every Polish Urysohn ultrametric space has ample generics of isometries.
The argument applies to the countable rational Urysohn ultrametric space as well – by \cite[Lemma~4.2]{Mal}, the class $\mathfrak{C}_\fin$ has the \emph{Hrushovski property} (EPPA), i.e.  every $A \in \mathfrak{C}_\fin$ is contained in $B \in \mathfrak{C}_\fin$ such that every partial isometry of $A$ extends to a total isometry of $B$.
Since ultrametric spaces have a canonical amalgamation (as in Lemma~\ref{thm:complete_isometric_amalgamation}), $\mathfrak{C}_\fin$ has also \emph{amalgamation property with automorphisms} (APA), i.e. all $A \leq B, C$ in $\mathfrak{C}_\fin$ admit an amalgamation $D \in \mathfrak{C}_\fin$ such that all coherent isometries $f \in \Iso(A)$, $g \in \Iso(B)$, and $h \in \Iso(C)$ extend to an isometry of $D$.
The Fraïssé limit of a class with EPPA and APA automatically has ample generics (see \cite[Theorem~2.1.5]{Siniora}).

This standard strategy for obtaining ample generics completely fails for dc-automorphisms – since finite linear orders have no nontrivial automorphisms, every finite total dc-automorphism is an isometry, and so $\Ult_\fin$ does not have EPPA.
In fact, we show that $\U$ does not have a generic pair of dc-automorphisms.
On the other hand, a generic dc-automorphism exists, but we use a strategy based on so-called \emph{determined partial automorphisms} in place of finite total automorphisms.

Let us summarize the results obtained in the article.
\begin{itemize}
    \item We provide a detailed description of single orbits under dc-automorphisms  (Theorem~\ref{thm:single_orbit}), which extends to partial dc-automorphisms (Corollary~\ref{thm:partial_single_orbit}).
    Any orbit of a finite partial dc-automorphism, even in the presence of other orbits, can be extended to one that is closed or monotone (Theorem~\ref{closedormonotone}).
    
    \item There is a generic dc-automorphism of $\U$ (Corollary~\ref{cor:generic_auto}) but no generic pair (Corollary~\ref{cor:no_generic_pair}).
    Moreover, every orbit of the generic dc-automorphism is either closed or monotone, and the induced automorphism on distances is the generic automorphism of $\Qpos$ (Corollary \ref{wn:generic-clo or mon}).

    \item Similarly, there is no generic pair of automorphisms of the Urysohn ultrametric space $\U^\prec$ endowed with the generic convex order (Corollary~\ref{cor:no_generic_pair_convex}), and we believe $\U^\prec$ has a generic automorphism (Question~\ref{que:generic_auto_convex}).

    \item To show the existence of a generic dc-automorphism of $\U$ we prove the cofinal amalgamation property (CAP) of the category $\Ult^*_\fin$ of finite partial dc-automorphisms, and we characterize amalgamation bases (Theorem~\ref{spaces_totaliz}).

    \item To that end we develop a general strategy (Strategy~\ref{strategy}) for showing CAP of $\C^*_\fin$ for any so-called nice category of structures $\C$.
    For such a category, every amalgamation base in $\C^*_\fin$ is a determined partial automorphism -- the partial orbits can be completed to total ones in a unique way (Lemma~\ref{thm:amalgamation_base_determined}).
    The strategy is summarized as follows:
    \begin{itemize}
        \item First show that total (potentially infinite) automorphisms have AP.
        Then amalgamation bases are exactly determined partial automorphisms (Theorem~\ref{thm:amalgamation_from_totalization})
        \item Pick a full subcategory $\Su \subs \C^*_\fin$ of so-called sufficient partial automorphisms and show that they have unique one-point orbit extensions, that the extensions lie in $\Su$, and that $\Su \subs \C^*_\fin$ is cofinal.
        Then $\Su$ is a family of determined partial automorphism witnessing CAP (Proposition~\ref{prop: suffimpldeterm}).
        \item If being in $\C$ is also necessary for being an amalgamation base, we have a characterization.
    \end{itemize}
    In the cases of linear orders and ultrametric spaces, the amalgamation bases are exactly partial automorphisms whose orbits are stable, i.e. we cannot close an orbit or merge two orbits.
    We guess this choice of $\Su$ does not work in general (Question~\ref{que:stable_always_works}).
\end{itemize}

\section{Preliminaries}

\subsection{General framework}

We shall use the general framework of categories of structures introduced in \cite{paper1}.
Recall that a category $\C$ consists of a family of objects (sometimes called $\C$-objects) and of a composable family of morphisms (sometimes called arrows or $\C$-maps).
We also have the identity morphism $\id{X}\maps X \to X$ for every $\C$-object $X$.
A subcategory $\D \subseteq \C$ is called \emph{full} if every $\C$-map between $\D$-objects is a $\D$-map.
We will use some other standard category-theoretic notions such as functors and colimits. See a standard reference such as \cite{MacLane} for details.

Our categories consist of model-theoretic structures in a fixed signature (sometimes many-sorted, but we use only finitely many sorts), and of all embeddings.
We call them \emph{categories of structures}.
For every category of structures $\C$, the full subcategory of all finite structures is denoted by $\C_\fin$.
Also, there is a notion of a substructure, i.e. of an embedding that is an inclusion on the level of sets.
The notation is $X \leq Y$, meaning that $X$ is a substructure of $Y$ and that $Y$ is an extension of $X$.
Arguments are sometimes simplified by replacing embeddings with substructures. This is without loss of generality because, up to isomorphism, every embedding is an inclusion.

Recall that in general an {\it isomorphism} in a category $\C$ is a $\C$-map $f\maps X \to Y$ such that there is $\C$-map $f^{-1}\maps Y \to X$ satisfying $f^{-1} \cmp f = \id{X}$ and $f \cmp f^{-1} = \id{Y}$.
In a category of structures, it is equivalently a $\C$-map that is bijective (in every sort) as $\C$-maps are embeddings.
Isomorphisms $X \to X$ are called {\it automorphisms}.
For every $\C$-object $X$ the automorphism group of $X$ is denoted by $\Aut(X)$.

\medskip

Let $\C$ be a category of finite structures.
For every $\C$-object $A$ a \emph{partial automorphism} of $A$ is a $\C$-isomorphism $p$ such that $\dom(p), \rng(p) \leq A$, i.e. an isomorphism between substructures of $A$.
An embedding of partial automorphisms $f\maps \seq{A,p} \to \seq{B,q}$ is a $\C$-map $f\maps A \to B$ such that $f \cmp p = q \cmp f$ on $\dom(p)$, from which it follows that $f$ restricts to (unique) $\C$-maps $\dom(p) \to \dom(q)$ and $\rng(p) \to \rng(q)$.
We denote the category of partial automorphisms in $\C$ and their embeddings by $\C^*$.
We can view $\C^*$ as a category of structures. Indeed, if structures in $\C$ are one-sorted, we turn $\seq{A,p}$ into a three-sorted structure with sorts $A,\dom(p),\rng(p)$, and we add the embeddings  $\dom(p)\to A$ and $\rng(p) \to A$ as well as both $p$ and $p^{-1}$ as operations. If $\C$ is many-sorted, we do this for every sort.

More generally, for an $n$-tuple   $\seq{A, p_1, \ldots, p_n}$ such that every $p_i$ is a partial automorphism of $A$,  an embedding $f\maps \seq{A, p_1, \ldots, p_n} \to \seq{B, q_1, \ldots, q_n}$ is a $\C$-map $f\maps A \to B$ that is an embedding  from $\seq{A,p_i}$ to $ \seq{B,q_i}$ for every $i$.
The corresponding category of $n$-tuples of partial automorphisms in $\C$ is denoted by $\C^{*n}$.

\begin{observation}
Every functor $F\maps \C \to \D$ between categories of finite structures lifts to a functor $F^{*n}\maps \C^{*n} \to \D^{*n}$ as follows.

For every partial automorphism $p$ of a $\C$-object $A$ we have that $F(p)$ is a $\D$-isomorphism between $F(\dom(p)) \leq F(A)$ and $F(\rng(p)) \leq F(A)$, so $F(p)$ is a partial automorphism of $F(A)$.
Hence, it is correct to define $F^{*n}(\seq{A, p_1, \ldots, p_n}) = \seq{F(A), F(p_1), \ldots, F(p_n)}$.
(Technically, since $F$ is an abstract functor, it is not guaranteed that $F(\dom(p))$ and $F(\rng(p))$ are substructures of $F(A)$, only that they are embedded to $F(A)$.
But then, they can be canonically replaced by their images.)

Moreover, for every $\C^*$-map $f\maps \seq{A, p} \to \seq{B, q}$,  the $\D$-map $F(f)\maps F(A) \to F(B)$ restricts to  $\D$-maps $\dom(F^*(p)) \to \dom(F^*(p))$ and $\rng(F^*(p)) \to \rng(F^*(p))$.
Hence, for every $\C^{*n}$-map $f\maps \seq{A, p_1, \ldots, p_n} \to \seq{B, q_1, \ldots, q_n}$ putting $F^{*n}(f) = F(f)$ is well defined.
\end{observation}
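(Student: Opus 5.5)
The claim to verify is that $F^{*n}$ is a well-defined functor $\C^{*n} \to \D^{*n}$. I will check three things in turn: that objects are sent to objects, that morphisms are sent to morphisms, and that identities and composition are preserved.

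For objects, I fix $\seq{A, p_1, \ldots, p_n}$. Each $p_i\maps \dom(p_i) \to \rng(p_i)$ is a $\C$-isomorphism with inverse $p_i^{-1}$. Functoriality gives $F(p_i^{-1}) \cmp F(p_i) = F(\id{\dom(p_i)}) = \id{F(\dom(p_i))}$, and symmetrically for the other composite, so $F(p_i)$ is a $\D$-isomorphism.

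The substructure inclusions $\iota\maps \dom(p_i) \to A$ and $\kappa\maps \rng(p_i) \to A$ are $\C$-maps, so $F(\iota)$ and $F(\kappa)$ are $\D$-maps, hence embeddings. Following the parenthetical remark, I replace $F(\dom(p_i))$ by its image $F(\iota)[F(\dom(p_i))] \leq F(A)$, and likewise for the range. I transport $F(p_i)$ along these identifications, i.e.\ I take $F(\kappa) \cmp F(p_i) \cmp F(\iota)^{-1}$ on the image. This yields an isomorphism between substructures of $F(A)$, which is a partial automorphism of $F(A)$.

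For morphisms, I take $f\maps \seq{A,p_i} \to \seq{B,q_i}$ and check that $F(f) \cmp F(p_i) = F(q_i) \cmp F(f)$ on $\dom(F(p_i))$. The key step is to rewrite the hypothesis "$f \cmp p_i = q_i \cmp f$ on $\dom(p_i)$" as an equation of $\C$-maps. Let $f_d\maps \dom(p_i) \to \dom(q_i)$ and $f_r\maps \rng(p_i) \to \rng(q_i)$ be the restrictions noted in the definition. Then the hypothesis becomes
\[ f_r \cmp p_i = q_i \cmp f_d, \]
together with the compatibilities $f \cmp \iota_A = \iota_B \cmp f_d$ and $f \cmp \kappa_A = \kappa_B \cmp f_r$.

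Applying $F$ to these three commuting squares gives the same squares in $\D$. After passing to the images, as above, this says precisely that $F(f)$ maps $\dom(F(p_i))$ into $\dom(F(q_i))$ and intertwines $F(p_i)$ with $F(q_i)$. So $F(f)$ is a $\D^{*n}$-map.

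Preservation of identities and composition is inherited from $F$, since $F^{*n}(f) = F(f)$.

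I expect the main obstacle to be only bookkeeping. "Restricting" $f$ is a statement about sets, but $F$ is an abstract functor, so everything must be phrased through the commuting squares with the inclusion maps. The one point that needs care is that the canonical replacement by images is compatible with these squares, which follows from the uniqueness of factorizations through embeddings (injectivity).
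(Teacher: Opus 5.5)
Your proposal is correct and follows essentially the same route as the paper. $F$ preserves isomorphisms, the objects $F(\dom(p_i))$ and $F(\rng(p_i))$ are replaced by their images in $F(A)$, and applying $F$ to the restrictions $f_d$, $f_r$ of $f$ (your commuting squares) shows that $F(f)$ restricts to domains and ranges and intertwines the partial automorphisms, with functoriality inherited from $F$. You simply make explicit the bookkeeping that the paper leaves implicit, and in doing so you implicitly correct the paper's typo: $\dom(F^*(p)) \to \dom(F^*(p))$ should read $\dom(F^*(p)) \to \dom(F^*(q))$.
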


For a category of structures $\C$ and a partial automorphism $\seq{A, p} \in \C^*$ we consider the \emph{orbit equivalence relation} (on every sort separately) defined by $a \sim_p b$ if $p^k(a) = b$ for some $k \in \Z$.
An \emph{orbit} is an equivalence class of the orbit equivalence relation.
In particular, \emph{trivial orbits} $\set{a}$ for $a \notin \dom(p) \cup \rng(p)$ are allowed.
The orbit of $a$ is sometimes denoted by $p^\Z(a)$.
An orbit $p^\Z(a)$ is called \emph{closed} if $p^k(a) = a$ for some $k > 0$.

We say that a category of structures $\K$ is \emph{hereditary} if for every $A \in \K$ and every substructure $B \leq A$ we have $B \in \K$.
Given $\seq{A, p} \in \K^*$ and a substructure $B \leq A$, we consider the \emph{restriction} $\seq{B, q} = \seq{A, p}\restr{B}$ ($q$ may also be denoted by $p\restr{B}$) where $\dom(q) = \set{b \in B: p(b) \in B}$ and $\rng(q) = \set{b \in B: p^{-1}(b) \in B}$.
If $\K$ is hereditary, then $\dom(q), \rng(q) \in \K$, and so $\seq{A, p}\restr{B} \in \K^*$.

\subsection{Fraïssé theory}

We shall recall notions from Fraïssé theory and adapt them to our context.
We follow the setup used in the previous paper~\cite{paper1}.
Here we additionally consider \emph{weak} Fraïssé classes and \emph{weak} Fraïssé sequences in order to apply Fraïssé theory to categories of partial automorphisms where the amalgamation property can hold only in trivial situations.

Let $\K$ be a category of structures.
Fraïssé theory relates properties of the class $\K_\fin$ of finite structures and of countable structures that are colimits of countable direct sequences from $\K_\fin$, i.e. up to isomorphism they are unions of countable increasing chains.
We denote this class by $\sig\K_\fin$.
We assume that $\K$ is \emph{$\sigma$-complete} in the sense that it is closed under countable unions and their isomorphic copies (i.e. under colimits of countable sequences taken in the ambient category of all structures of a given many-sorted signature).

A sequence $(A_n, f_n)$ in $\K$ consists of $\K$-objects $A_n$ and $\K$-maps $f_n\maps A_n \to A_{n + 1}$, $n \in \N$.
We also introduce notation for compositions of the bonding maps: $f_m^n\maps A_m \to A_n$ for $m \leq n \in \N$.
The colimit is denoted by $(A_\infty, f^\infty_n)$, i.e. it consists of a $\K$-object $A_\infty$ and of $\K$-maps $f_n^\infty\maps A_n \to A_\infty$ forming a cone.

Below we summarize the relevant properties of $\K_\fin$ such as variants of the amalgamation, properties of objects from $\sig\K_\fin$ such as homogeneity and extension property, and we formulate a general Fraïssé theorem in our context.
As a rule of thumb, a weak Fraïssé-theoretic property involves an extra ``guardian'' embedding $i\maps A \to A'$ through which a general embedding considered has to factorize.

We define the following properties for $\K_\fin$.
\begin{itemize}
    \item[(JEP)] The {\it joint embedding property}: for any $A,B\in\mathcal{K}_\fin$ there is $C\in \mathcal{K}_\fin$, which embeds both $A$ and $B$.
    
    \item[(AP)] The {\it amalgamation property}: for every $A\in \mathcal{K}_\fin$, $\alpha_1\colon A\to B$ and $\alpha_2\colon A\to C$ in $\mathcal{K}_\fin$ there is $D\in \mathcal{K}_\fin$ together with $\beta_1\colon B\to D$ and $\beta_2\colon C\to D$ such that $\beta_1\circ\alpha_1=\beta_2\circ \alpha_2$.
    In that case, we say that $A$ is an {\it amalgamation base} in $\mathcal{K}_\fin$.
    
    \item[(SAP)] The {\it strong amalgamation property} is a strengthening of the AP, where we additionally have $\beta_1(B)\cap \beta_2(C)=\beta_1(\alpha_1(A)) (=\beta_2(\alpha_2(A))) $.  In that case, we say that $A$ is a {\it strong amalgamation base}. 
    \item[(CAP)] The {\it cofinal amalgamation property}:  any $A\in \mathcal{K}_\fin$ embeds into $A'\in \mathcal{K}_\fin$ that is an amalgamation base in $\mathcal{K}_\fin$.
    
    \item[(WAP)] The \emph{weak amalgamation property}:
    for every $A_0 \in \K_\fin$ there is an embedding $i\maps A \to A'$ in $\K_\fin$ that is an \emph{amalgamable map}, i.e. for any $\alpha_1\colon A' \to B$ and $\alpha_2\colon A'\to C$ in $\mathcal{K}_\fin$ there is $D\in \mathcal{K}_\fin$ together with $\beta_1\colon B\to D$ and $\beta_2\colon C\to D$ such that $\beta_1\circ \alpha_1\circ i=\beta_2\circ \alpha_2\circ i$ holds.
\end{itemize}

We say that $\K_\fin \neq \emptyset$ with countably many isomorphism types is a \emph{Fraïssé class} if it satisfies JEP and AP, and it is a \emph{weak Fraïssé class} if it satisfies JEP and WAP.

Note that we do not require $\K_\fin$ to be hereditary.

\medskip

\begin{df} \label{def:Fraisse}
We say that a structure $U \in \K$ 
\begin{itemize}
    \item is \emph{universal} for $\K_\fin$  if for every $A \in \K_\fin$ there is an embedding $f\maps A \to U$,
    
    \item is \emph{homogeneous} for $\K_\fin$ if for every $A, B \in \K_\fin$ such that $A, B \leq U$ and every isomorphism $f\maps A \to B$ there is $h \in \Aut(U)$ extending $f$, or equivalently, for every $A \in \K_\fin$ and all embeddings $f, g\maps A \to U$ there is $h \in \Aut(U)$ such that $h \cmp g  = f$,
    
    \item is \emph{weakly homogeneous} for $\K_\fin$ if for every $A \in \K_\fin$ and $f\maps A \to U$ there is $A' \in \K_\fin$ and embeddings $i\maps A \to A'$ and $f'\maps A' \to B$ with $f' \cmp i = f$ such that for every $g\maps A' \to U$ there is an isomorphism $h\maps U \to U$ with $h \cmp g \cmp i = f' \cmp i$,
    
    \item has the \emph{extension property} / is \emph{injective} for $\K_\fin$ if for every $A, B \in \K_\fin$ and embeddings $f\maps A \to U$ and $g\maps A \to B$ there is an embedding $h\maps B \to U$ such that $h \cmp g = f$,
    
    \item has the \emph{weak extension property} / is \emph{weakly injective} for $\K_\fin$ if for every $A \in \K_\fin$ and $f\maps A \to U$ there is $A' \in \K_\fin$ and an embedding $i\maps A \to A'$ such that for every $B \in \K_\fin$ and $g\maps A' \to B$ there is an embedding $h\maps B \to U$ such that $h \cmp g \cmp i = f$.
\end{itemize}
Similarly we say that a sequence $(A_n, f_n)$ in $\K_\fin$ 
\begin{itemize}
    \item is \emph{cofinal} if for every $A \in \K_\fin$ there is an embedding $f\maps A \to A_n$ for some $n$,
    
    \item has the \emph{extension property} if for every $A, B \in \K_\fin$ and embeddings $f\maps A \to A_m$ and $g\maps A \to B$ there is an embedding $h\maps B \to A_n$ for some $n \geq m$ such that $h \cmp g = f^n_m \cmp f$,
    
    \item is \emph{absorbing} if for every $B \in \K_\fin$ and an embedding $g\maps A_m \to B$ there is an embedding $h\maps B \to A_n$ for some $n \geq m$ such that $h \cmp g = f^n_m$, i.e. it has the extension property for $f = \id{A_m}$,

    \item is \emph{weakly absorbing} if for every $m$ there is $m' \geq m$ such that for every $B \in \K_\fin$ and an embedding $g\maps A_{m'} \to B$ there is an embedding $h\maps B \to A_n$ for some $n \geq m$ such that  $h \cmp g \cmp i = f^n_m = f^n_{m'} \cmp i$ where $i := f^{m'}_m$,
    
    \item is \emph{Fraïssé} if it has the extension property (which under AP is equivalent to just being absorbing) and is cofinal,

    \item is \emph{weak Fraïssé} if it weakly absorbing and cofinal.
\end{itemize}
\end{df}

Now that we have all the definitions, the core of (weak) Fraïssé theory can be summarized by the following two theorems.
The Fraïssé case including references is already covered by \cite[Theorems~3.2 and 3.3]{paper1}.
The weak Fraïssé case was proved by Kubiś~\cite[Theorems~3.7, 4.2 and 5.1]{KubWeakFraisse} in a more general setup.
For another exposition see \cite[Section~2.2]{BBDK}.

\begin{tw} \label{thm:Fclass}
    $\K_\fin$ has a (weak) Fraïssé sequence if and only if it is a (weak) Fraïssé class.
\end{tw}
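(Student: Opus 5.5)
We prove both directions, treating the Fraïssé and weak Fraïssé cases in parallel.

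\emph{From a sequence to the class properties.} Suppose $(A_n, f_n)$ is a (weak) Fraïssé sequence in $\K_\fin$. Nonemptiness is immediate.

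For JEP, take $A, B \in \K_\fin$. By cofinality both embed into terms of the sequence, say $A_m$ and $A_k$. Composing with bonding maps, both embed into $A_{\max(m,k)}$.

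For AP in the Fraïssé case, take $\alpha_1 \maps A \to B$ and $\alpha_2 \maps A \to C$.
\begin{itemize}
    \item Embed $B$ into some $A_m$ via $e$, using cofinality.
    \item Apply the extension property to $e \cmp \alpha_1 \maps A \to A_m$ and $\alpha_2 \maps A \to C$. This gives $h \maps C \to A_n$ with $h \cmp \alpha_2 = f^n_m \cmp e \cmp \alpha_1$.
    \item Then $D = A_n$ with $\beta_1 = f^n_m \cmp e$ and $\beta_2 = h$ is an amalgamation.
\end{itemize}

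For WAP in the weak case, fix $A$ and embed it into some $A_m$ via $e$. Let $m' \ge m$ be the index given by weak absorption, and set $i = f^{m'}_m \cmp e \maps A \to A_{m'}$. We check that $i$ is amalgamable.
\begin{itemize}
    \item Given $\alpha_1 \maps A_{m'} \to B$, weak absorption yields $h_1 \maps B \to A_{n_1}$ with $h_1 \cmp \alpha_1 \cmp f^{m'}_m = f^{n_1}_m$.
    \item Similarly, $\alpha_2 \maps A_{m'} \to C$ yields $h_2 \maps C \to A_{n_2}$ with $h_2 \cmp \alpha_2 \cmp f^{m'}_m = f^{n_2}_m$.
    \item Push both into $A_{\max(n_1,n_2)}$ by bonding maps. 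After precomposing with $e$, the two composites agree with $f^{\max(n_1,n_2)}_m \cmp e$.
\end{itemize}
Countably many isomorphism types follows since every finite object embeds into some $A_n$. This uses an implicit assumption: each finite structure has only finitely many substructures up to isomorphism, or this condition is part of the standing hypotheses.

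\emph{From the class properties to a sequence.} This is the main work: a bookkeeping construction.

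In the Fraïssé case, enumerate representatives $\{C_k\}$ of the isomorphism types. We build $A_0 \to A_1 \to \cdots$ so that:
\begin{itemize}
    \item each $C_k$ is eventually joined in, using JEP;
    \item every task is eventually handled, using AP.
\end{itemize}
A task is a pair consisting of an embedding $g \maps A_m \to B$, with $B$ among the representatives, up to isomorphism over $A_m$. There are countably many such pairs, since each $A_m$ is finite and there are countably many types. Handling a task at stage $n$ means amalgamating $A_n$ with $B$ over $A_m$. The dovetailing is standard: fix a bijection $\N \to \N \times \N$ so that every task is revisited at a later stage. The resulting sequence is cofinal and absorbing, and absorbing suffices under AP.

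In the weak case the same construction works with one change. At each step we first pass to an amalgamable extension: when $A_m$ is created, WAP gives an amalgamable $i \maps A_m \to A'$, and we take $A_{m+1} = A'$. That is, choose $m' = m+1$, and make each later amalgamation over $A_{m'}$ rather than over $A_m$. This gives weak absorption by construction, with the guardian $i = f^{m'}_m$ exactly as in the definition.

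I expect the main obstacle to be the weak case's bookkeeping. There one must ensure that subsequent extensions still factor through the guardian. The approach is to always amalgamate tasks $g \maps A_{m'} \to B$ against the current $A_n$ along $f^n_{m'}$. This requires amalgamability of $i$ with respect to arbitrary maps out of $A_{m'}$, which is precisely what WAP provides. Alternatively, we can cite \cite{KubWeakFraisse} for this direction.
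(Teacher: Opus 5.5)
Your proof is correct. The paper gives no argument of its own for this theorem. It defers the Fra\"{\i}ss\'e case to \cite{paper1} and the weak case to Kubi\'s \cite{KubWeakFraisse}, and what you wrote is the standard argument behind those results: cofinality gives JEP and countability, the extension property (resp.\ weak absorption) gives AP (resp.\ WAP), and a dovetailed bookkeeping construction gives the converse.

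Two points should be tightened.

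First, countably many isomorphism types needs no extra hypothesis. Every embedding is an isomorphism onto its image, and a finite structure has only finitely many subsets, hence only finitely many substructures.

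Second, in the weak construction you cannot literally put $A_{m+1}=A'$ at every stage, because then no stage is left for tasks and joint embeddings. Instead, let each stage perform its scheduled amalgamation or joint embedding and also pass to an amalgamable extension, in either order. This works because amalgamable maps are closed under composition with arbitrary embeddings on both sides. If $i\maps A\to A'$ is amalgamable, then $k\cmp i$ is amalgamable for every $k\maps A'\to A''$: apply amalgamability of $i$ to $\alpha_1\cmp k$ and $\alpha_2\cmp k$. Likewise $i\cmp j$ is amalgamable for every $j$ into $A$: precompose the resulting equation with $j$. So $f^{m+1}_m$ is still amalgamable. Your treatment of tasks $g\maps A_{m+1}\to B$, amalgamating $g$ with $f^n_{m+1}$ over the guardian $f^{m+1}_m$, then gives weak absorption exactly as you describe.
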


\begin{tw} \label{thm:Flim}
    Let $\K$ be a $\sigma$-complete category of structures, and let $U \in \sig\K_\fin$.
    The following conditions are equivalent.
    \begin{enumerate}
        \item $U$ is universal and (weakly) homogeneous for $\K_\fin$.
        \item $U$ is universal and has the (weak) extension property for $\K_\fin$.
        \item $U$ is the colimit of a (weak) Fraïssé sequence in $\K_\fin$.
    \end{enumerate}
    Moreover, such object $U$ is unique, and every sequence in $\K_\fin$ with colimit $U$ is (weak) Fraïssé.
    In the Fraïssé case, $U$ is universal for $\sig\K_\fin$.
\end{tw}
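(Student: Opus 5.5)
The plan is to prove the cycle $(3)\Rightarrow(2)\Rightarrow(1)\Rightarrow(3)$, together with the uniqueness statement, by back-and-forth arguments. I treat the weak case in parallel, with the guardian embedding $i\maps A \to A'$ carried through every step.

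For $(3)\Rightarrow(2)$, write $U$ as the colimit of a (weak) Fraïssé sequence $(A_n, f_n)$. Universality follows from cofinality composed with $f^\infty_n$. For the extension property, take a finite $A$ with $f\maps A \to U$. Since $A$ is finite and $U$ is a countable union, $f$ factors through some $f^\infty_m$. The sequence's extension property, or in the weak case the weak absorption at stage $m$ with $A' := A_{m'}$ and $i := f^{m'}_m$, then yields the required $h$.

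For $(2)\Rightarrow(1)$, I run a back-and-forth. Fix embeddings $f, g\maps A \to U$. Enumerate $U$ as a union of finite substructures $U_k$. Alternately extend a partial isomorphism between finite substructures, each time using the extension property to absorb the next $U_k$ on the appropriate side. The union of the partial isomorphisms is an automorphism $h$ with $h \cmp g = f$. In the weak case, at each step I first pass to the guardian $A'$. The extension property only guarantees agreement after precomposing with $i$, so the constructed maps cohere only on the images of the guardians. I handle this by choosing the sequence of finite pieces so that each stage is itself the guardian extension of the previous one. The resulting maps then agree on an increasing exhausting chain, and the colimit map is well defined.

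For $(1)\Rightarrow(3)$, take any chain $U = \bigcup U_n$ with $U_n$ finite. Cofinality comes from universality, since every finite $A$ embeds into $U$ and hence into some $U_n$. Absorption uses homogeneity: given $g\maps U_n \to B$, universality embeds $B$ into $U$. Homogeneity then gives an automorphism correcting the embedding so that it extends the inclusion of $U_n$ (or of its guardian image). Finiteness of $B$ puts the image into some $U_k$. This also proves the ``moreover'' clause about every sequence with colimit $U$. Uniqueness follows by applying the same back-and-forth between two such objects.

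Finally, universality for $\sigma\K_\fin$ in the Fraïssé case comes from building the embedding along a chain, using the extension property at each step. The main obstacle is the weak back-and-forth: one has to arrange that the guardians compose coherently. I would first try to reduce to the sequence formulation and cite \cite{KubWeakFraisse}.
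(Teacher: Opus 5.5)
Your outline is correct and is the standard (weak) back-and-forth argument. The paper gives no proof of its own here: it only cites \cite[Theorems~3.2 and 3.3]{paper1} for the Fraïssé case and Kubiś~\cite{KubWeakFraisse} for the weak case, which rest on the same arguments, so your route matches. Two small fixes are needed: since AP is not assumed and the paper defines a Fraïssé sequence via the extension property, in $(1)\Rightarrow(3)$ you should prove the full extension property of the chain by applying homogeneity to an arbitrary $f\maps A \to U_m$ rather than only to the inclusion of $U_n$, and in $(3)\Rightarrow(2)$ the guardian for $A$ should be $f^{m'}_m \cmp f'$, where $f'\maps A \to A_m$ is your factorization of $f$, rather than $f^{m'}_m$ itself.
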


The unique object $U$ from Theorem~\ref{thm:Flim} is called the \emph{(weak) Fraïssé limit} of $\K_\fin$ (in $\K$), and by Theorem~\ref{thm:Fclass} it exists if and only if $\K_\fin$ is a (weak) Fraïssé class.

\begin{observation} \label{obs:limit_from_cofinal}
    For any cofinal subcategory $\Su \subseteq \K_\fin$, 
    every weakly absorbing sequence $(A_n, f_n)$ in $\K_\fin$ can be intertwined with a sequence $(B_n, g_n)$ with $B_n \in \Su$ for every $n$.
    From this some properties of the limit may be concluded, as we will see in Corollary \ref{wn:generic-clo or mon}.
\end{observation}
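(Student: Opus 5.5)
The plan is to build the intertwining inductively, alternating an application of cofinality of $\Su$ with an application of weak absorption of $(A_n, f_n)$. The result will be a zig-zag
\[
A_{m_0} \to B_0 \to A_{m_1} \to B_1 \to A_{m_2} \to \cdots
\]
in which every triangle commutes. Here cofinality of $\Su \subseteq \K_\fin$ is used only in the form: every $\K_\fin$-object embeds into some $\Su$-object.

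Start with $m_0 = 0$. Suppose $m_k$ has been chosen.
\begin{enumerate}
\item By weak absorption, pick $m_k' \geq m_k$ as in the definition, and put $i_k := f^{m_k'}_{m_k}$.
\item By cofinality of $\Su$, choose $B_k \in \Su$ and an embedding $g_k\maps A_{m_k'} \to B_k$.
\item Weak absorption applied to $g_k$ gives $n \geq m_k$ and $h\maps B_k \to A_n$ with $h \cmp g_k \cmp i_k = f^n_{m_k}$. Take $m_{k+1} := n+1$ and $h_k := f_n \cmp h$; the identity persists, so $h_k \cmp (g_k \cmp i_k) = f^{m_{k+1}}_{m_k}$. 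This makes the indices strictly increasing.
\end{enumerate}

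Next define the maps of the new sequence. Set $u_k := g_k \cmp i_k \maps A_{m_k} \to B_k$, and let the bonding maps be $b_k := u_{k+1} \cmp h_k \maps B_k \to B_{k+1}$. Both kinds of triangle then commute:
\begin{itemize}
\item the triangle $A_{m_k} \to B_k \to A_{m_{k+1}}$ commutes by step~3, since $h_k \cmp u_k = f^{m_{k+1}}_{m_k}$;
\item the triangle $B_k \to A_{m_{k+1}} \to B_{k+1}$ commutes by the definition of $b_k$.
\end{itemize}
So $(B_k, b_k)$ is a sequence in $\Su$ intertwined with the subsequence $(A_{m_k})$. Consequently the two sequences have canonically isomorphic colimits, and that colimit is the colimit $A_\infty$ of the original sequence.

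I expect no genuine obstacle. The only subtle point is the guardian map $i_k$: weak absorption gives commutativity only after precomposing with $i_k$. That is why the map into $B_k$ must be taken from $A_{m_k}$ through $A_{m_k'}$, namely $u_k = g_k \cmp i_k$, rather than from $A_{m_k'}$ itself. With this choice every required identity holds on the nose, not merely up to the guardian.
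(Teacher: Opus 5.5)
Your proposal is correct: the zig-zag $A_{m_k} \to B_k \to A_{m_{k+1}}$ with $u_k = g_k \cmp i_k$ and $b_k = u_{k+1} \cmp h_k$ is the standard intertwining argument that the paper leaves implicit, since the observation is stated without proof. Routing $u_k$ through the guardian $i_k$ is exactly how weak absorption has to be used. One minor point: the bonding maps $b_k$ are $\K_\fin$-maps and need not be $\Su$-maps unless $\Su$ is full, but this is harmless because the statement only requires $B_n \in \Su$.
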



For every structure $X \in \sigma\K_\fin$ we endow its automorphism group $\Aut(X)$ with the topology of pointwise convergence, turning it into a Polish group.
A basic open set is of the form $V_{\seq{A, p}} = \set{f \in \Aut(X) : \seq{X, f} \geq \seq{A, p}}$ where $A \leq X$ and $\seq{A, p} \in \K^*_\fin$.

\begin{df}
Let $\K$ be a $\sigma$-complete category of structures such that $\K_\fin$ is a Fraïssé class, and let $U$ be the Fraïssé limit.
A \emph{generic $n$-tuple of automorphisms} of $U$ is $\seq{h_1,\ldots, h_n}\in \Aut(U)^n$ such that the conjugacy class
\[
    \{\seq{g h_1 g^{-1},\ldots, g h_n g^{-1}}: g \in \Aut(U)\}
\]
is comeager in $\Aut(U)^n$.
\end{df}

It turns out that existence of a generic $n$-tuple of automorphisms is closely connected to Fraïssé-theoretic properties of the class of finite partial automorphisms $\K^{*n}_\fin$.
However, partial automorphisms are essentially incompatible with AP.
By a correspondence of Kechris and Rosendal~\cite[Theorem~6.2]{KR} a generic $n$-tuple exists if and only if the class of finite partial automorphisms satisfies JEP and WAP.
A variant of the correspondence for $\omega$-categorical structures was independently proved by Ivanov~\cite[Theorem~1.2]{Iv}.
We shall use the following more precise formulation, which explicitly mentions  weak Fraïssé limit, in order to deduce properties of a generic automorphism in Corollary~\ref{wn:generic-clo or mon} via Observation~\ref{obs:limit_from_cofinal}.
To our knowledge, this more precise formulation is folklore.

Let us also note that the original theorem is stated for classes of one-sorted structures. This is not a problem as every many-sorted structure can be turned into a one-sorted structure while preserving the notion of embedding, and hence any category of structures is equivalent to a category of one-sorted structures.

\begin{tw} \label{thm:KR}
    Let $\K$  be a category of structures such that $\K_\fin$ is a Fraïssé class and let $U$ be its Fraïssé limit.
    Then for every $n \in \N$ and $\seq{h_1, \ldots, h_n} \in \Aut(U)^n$ the following are equivalent:
    \begin{enumerate}
        \item $\seq{h_1, \ldots, h_n}$ is a generic $n$-tuple of automorphisms of $U$,
        \item $\K_\fin^{*n}$ is a weak Fraïssé class and $\seq{U, h_1, \ldots, h_n} \in \K^{*n}$ is the limit.
    \end{enumerate}
    Hence, a generic $n$-tuple of automorphisms exists if and only if $\K_\fin^{*n}$ has JEP and WAP.
\end{tw}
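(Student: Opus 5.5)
The plan is to derive the equivalence from the correspondence of Kechris and Rosendal~\cite[Theorem~6.2]{KR}, together with Theorems~\ref{thm:Fclass} and~\ref{thm:Flim} applied to the $\sigma$-complete category $\K^{*n}$. Write $\bar h=\seq{h_1,\ldots,h_n}$. Before anything else, I would make the topology on $\Aut(U)^n$ concrete. Its basic open sets are $V_{\seq{A,\bar p}}=\set{\bar f: \seq{U,\bar f}\geq \seq{A,\bar p}}$ for finite $A\leq U$ and $\seq{A,\bar p}\in\K^{*n}_\fin$. Conjugating by $g\in\Aut(U)$ corresponds to replacing $\seq{A,\bar p}$ by an isomorphic copy embedded in $U$ via $g$. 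So the conjugacy class of $\bar h$ is exactly the set of $\bar f$ with $\seq{U,\bar f}\cong\seq{U,\bar h}$ in $\K^{*n}$. Moreover, every $\seq{U,\bar f}$ lies in $\sig\K^{*n}_\fin$, being the union of restrictions to an exhausting chain of finite substructures.

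For (2)$\Rightarrow$(1), assume $\K^{*n}_\fin$ is a weak Fraïssé class with limit $\seq{U,\bar h}$. By Theorem~\ref{thm:Flim}, the conjugacy class equals the set of $\bar f$ such that $\seq{U,\bar f}$ is universal and weakly injective for $\K^{*n}_\fin$. I would show this set is a dense $G_\delta$.

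Universality is a countable intersection of open sets, indexed over the countably many isomorphism types $\seq{A,\bar p}$. Each such set is dense because $\K_\fin$ is Fraïssé and $\K^{*n}_\fin$ has JEP: a jointly embedding object can be realised inside $U$ by the extension property of $U$ for $\K_\fin$.

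Weak injectivity is a countable conjunction as well. For each finite $\seq{A,\bar p}\leq U$, I would use WAP to fix an amalgamable $i\maps\seq{A,\bar p}\to\seq{A',\bar p'}$. Then, for each of the countably many $g\maps A'\to B$, I need the set of $\bar f$ which, if they extend $\bar p$, admit an embedding of $B$ over $A$. This set is open. It is dense by the amalgamability of $i$ combined with the extension property of $U$. The main obstacle here is the bookkeeping that turns ``for all $A'\geq A$'' into a genuinely countable intersection of open dense sets. I would handle it exactly as in~\cite{KR}: restrict to $\seq{A,\bar p}$ with $A$ ranging over finite subsets of $U$, and take one fixed witness $A'$ for each.

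For (1)$\Rightarrow$(2), the existence of a generic tuple gives JEP and WAP for $\K^{*n}_\fin$ by~\cite[Theorem~6.2]{KR}. Hence a weak Fraïssé limit $\seq{U',\bar h'}$ exists by Theorem~\ref{thm:Fclass}. Its underlying structure $U'$ is in $\sig\K_\fin$. I expect the second subtle step to be showing $U'\cong U$. For this I would check that $U'$ has the extension property for $\K_\fin$: given $A\leq U'$ and $A\leq B$, use the weak extension property in $\K^{*n}$ for the restriction $\seq{A,\bar h'\restr{A}}$, and pass to $B$ amalgamated with the guardian $A'$ in $\K_\fin$, with trivial partial automorphisms on the new points. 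Transporting along an isomorphism $U'\cong U$, we get $\bar h'\in\Aut(U)^n$, which is generic by the (2)$\Rightarrow$(1) direction. Two comeager conjugacy classes must intersect, so $\bar h$ is conjugate to $\bar h'$ and $\seq{U,\bar h}$ is the limit. The final ``hence'' sentence of the theorem then follows from Theorem~\ref{thm:Fclass}.
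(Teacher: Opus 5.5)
The paper does not prove this theorem itself; it cites Kechris--Rosendal (and Ivanov) and calls the precise formulation folklore, so your proposal has to stand on its own. Much of it does. The following steps are fine:
\begin{itemize}
\item identifying the conjugacy class with the set of $\bar f$ for which $\seq{U,\bar f}$ is universal and weakly injective;
\item the universality sets;
\item the extension-property argument giving $U'\cong U$.
\end{itemize}
The weak-injectivity step of (2)$\Rightarrow$(1), however, fails as written, and your (1)$\Rightarrow$(2) also relies on it. You fix in advance one amalgamable witness $i\maps\seq{A,\bar p}\to\seq{A',\bar p'}$ for each finite $\seq{A,\bar p}\le U$, and require that every $\bar f\supseteq\bar p$ realize each $g\maps\seq{A',\bar p'}\to\seq{B,\bar q}$ over $A$. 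The resulting sets are open but in general not dense.

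For a counterexample, take $\K=\Lin$, $U=\Q$, $n=1$, $A=\set{a}$, $p=\emptyset$, and the amalgamable extension $p'=\set{a\mapsto a}$ (even an amalgamation base). For $g=\id{}$ your set is $\set{f: f(a)=a}$, which misses the nonempty open set $\set{f: f(a)>a}$. The reason is that amalgamability of $i$ only amalgamates two extensions of $\seq{A',\bar p'}$. A basic neighbourhood inside $V_{\seq{A,\bar p}}$ instead corresponds to an arbitrary extension of $\seq{A,\bar p}$, which can be incompatible with $\seq{A',\bar p'}$ (here, it can move $a$). The witness in weak injectivity is existentially quantified and must be allowed to depend on $\bar f$.

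\textbf{Repair.} Turn that existential quantifier into a countable union over witnesses realized in $U$.
\begin{itemize}
\item For each finite $\seq{A,\bar p}\le U$, let $O_{\seq{A,\bar p}}$ be the union of the (clopen) complement of $V_{\seq{A,\bar p}}$ with all $V_{\seq{A',\bar p'}}$, where $\seq{A,\bar p}\le\seq{A',\bar p'}\le U$ is finite and the inclusion is amalgamable. This set is open. It is dense by WAP: apply WAP to a shrunken neighbourhood $\seq{C,\bar r}\ge\seq{A,\bar p}$, note that an amalgamable map precomposed with any embedding stays amalgamable, and realize the result in $U$ via the extension property and homogeneity.
\item For each such realized $\seq{A',\bar p'}$ and each of the countably many $g$, take the complement of $V_{\seq{A',\bar p'}}$ together with the set of $\bar f$ admitting $h$ with $h\cmp g\cmp i$ equal to the inclusion of $A$. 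This set is open, and now it is dense. Any basic neighbourhood inside $V_{\seq{A',\bar p'}}$ shrinks to one given by an extension of $\seq{A',\bar p'}$, and the amalgamability of the inclusion applies to that extension.
\end{itemize}
Intersecting all these sets with your universality sets gives a dense $G_\delta$ of tuples $\bar f$ with $\seq{U,\bar f}$ universal and weakly injective. With this change, the rest of your argument, including the comeager-classes-intersect step in (1)$\Rightarrow$(2), goes through.
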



We shall recall transfer principles formulated in \cite[Section~3.1]{paper1}.
These provide conditions for preservation of amalgamation and of the Fraïssé limit by a functor, and capture the situation of generalized reducts (images under forgetful functors) of a homogeneous structure being homogeneous in their respective categories.

Let us recall the relevant properties of functors.
A functor $F\maps \C \to \D$ is

\begin{itemize}
    \item {\it wide} / {\it surjective on objects} if for every $\D$-object $X$ there is a $\C$-object $X'$ such that $F(X')=X$;
    
    \item {\it essentially wide} / {\it essentially surjective on objects} if for every $\D$-object $X$ there is a $\C$-object $X'$ with $F(X') $ isomorphic to $ X$;
    
	\item {\it cofinal} if for every $\D$-object $X$ there is a $\C$-object $X'$ and a $\D$-map $f\maps X \to F(X')$.
\end{itemize}

\begin{uwgi}
We have the following implications for a functor $F$:

\smallskip
\centerline{wide $\implies$ essentially wide $\implies$ cofinal.}
\end{uwgi}

\noindent
We say that a functor $F\colon \C\to\D$  is
\begin{itemize}
    \item {\it star-surjective} if for every $\C$ object $X'$ and every $\D$-map $f\colon F(X')\to Y$ there is a $\C$-object $Y'$ and a $\C$-map $f'\colon X'\to Y'$ with $F(f')=f$;
    
	\item {\it essentially star-surjective} if for every $\C$-object $X'$ and every $\D$-map $f\maps F(X') \to Y$ there is a $\C$-object $Y'$ and a $\C$-map $f'\maps X' \to Y'$ and an $\D$-isomorphism $g\maps Y \to F(Y')$ with $F(f') = g \circ f$;
    
    \item {\it absorbing} if for every $\C$-object $X'$ and every $\D$-map $f\maps F(X') \to Y$ there is a $\C$-object $Y'$, a $\C$-map $f'\maps X' \to Y'$, and an $\D$-map $g\maps Y \to F(Y')$ with $F(f') = g \cmp f$;  
    
	\item {\it weakly absorbing} if for every $\C$-object $X'$ there is a $\D$-map $e\maps F(X') \to X$ such that for every $\D$-map $f\maps X \to Y$ there is a $\C$-object $Y'$, a $\C$-map $f'\maps X' \to Y'$, and an $\D$-map $g\maps Y \to F(Y')$ with $F(f') = g \circ f \circ e$ (wlog the map $e$ may be of the form $F(e')$ for some $\C$-map $e'\maps X' \to X''$).
\end{itemize}

\begin{uwgi}
We have the following implications for a functor $F$:

\smallskip
\centerline{star-surjective $\Rightarrow$ essentially star-surjective $\Rightarrow$ absorbing $\Rightarrow$ weakly absorbing.}
\end{uwgi}

\begin{uwgi}
    The above properties of functors $\C \to \D$ specialize to the corresponding properties of subcategories $\C \subseteq \D$ (via the inclusion functor) and of sequences in $\D$ (viewed as functors $\N \to \D$).
\end{uwgi}

Note that definitions of AP, CAP, and WAP make sense for arbitrary categories, and we use them in the following general proposition.

\begin{prop} \label{thm:fin_transfer}
    Let $\C$ and $\D$ be categories and $F\maps \C \to \D$ a functor.
    \begin{enumerate}
        \item If $\C$ has AP and $F$ is essentially wide and absorbing, then $\D$ has AP.
        \item If $\C$ has CAP and $F$ is cofinal and absorbing, then $\D$ has CAP.
        \item If $\C$ has WAP and $F$ is cofinal and weakly absorbing, then $\D$ has WAP.
    \end{enumerate}
    In particular, if $F$ is wide and star-surjective, all the amalgamation properties are transferred forward.
\end{prop}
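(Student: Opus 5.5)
We prove the three items separately by transporting diagrams along $F$ and using the defining properties of the functor. Throughout, the key device is the same. Given a span in $\D$ whose apex lies in the image of $F$, absorption lets us replace each leg $f\maps F(X') \to Y$ by a $\C$-map $f'\maps X' \to Y'$ together with a $\D$-map $g\maps Y \to F(Y')$ satisfying $F(f') = g \cmp f$. We then amalgamate in $\C$ and push the result forward by $F$.

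For (1), let $\alpha_1\maps A \to B$ and $\alpha_2\maps A \to C$ be $\D$-maps.
\begin{itemize}
    \item By essential wideness pick $X'$ and an isomorphism $u\maps F(X') \to A$, and replace $\alpha_i$ by $\alpha_i \cmp u$; since $u$ is an isomorphism, amalgamating the new span amalgamates the old one.
    \item By absorption obtain $\C$-maps $f_i'\maps X' \to Y_i'$ and $\D$-maps $g_1\maps B \to F(Y_1')$, $g_2\maps C \to F(Y_2')$ with $F(f_i') = g_i \cmp \alpha_i \cmp u$.
    \item By AP in $\C$ find $h_i\maps Y_i' \to Z'$ with $h_1 f_1' = h_2 f_2'$.
    \item Set $\beta_i = F(h_i) \cmp g_i$. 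Then $\beta_1 \alpha_1 u = F(h_1 f_1') = F(h_2 f_2') = \beta_2 \alpha_2 u$. Cancelling the isomorphism $u$ gives $\beta_1\alpha_1 = \beta_2\alpha_2$.
\end{itemize}

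For (2), let $A \in \D$.
\begin{itemize}
    \item By cofinality of $F$ pick $X$ and $e\maps A \to F(X)$.
    \item By CAP in $\C$ pick an amalgamation base $X'$ with $k\maps X \to X'$.
    \item We claim $F(X')$ is an amalgamation base in $\D$, which makes $F(k) \cmp e$ the required embedding.
    \item The claim is the argument of (1) with $u = \id{F(X')}$: absorb both legs, amalgamate at the base $X'$ in $\C$, and push forward.
\end{itemize}

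For (3), let $A \in \D$.
\begin{itemize}
    \item By cofinality pick $e\maps A \to F(X)$, and by WAP in $\C$ an amalgamable $\C$-map $i\maps X \to X_1$.
    \item Weak absorption of $F$ at $X_1$ gives $e_1 = F(e')$ with $e'\maps X_1 \to X_2$, such that every $\D$-map out of $F(X_2)$ can be absorbed after precomposing with $F(e')$.
    \item We claim that $F(e' \cmp i) \cmp e$ is amalgamable in $\D$. Take $\alpha_1, \alpha_2$ out of $F(X_2)$.
    \item Weak absorption yields $f_j'\maps X_1 \to Y_j'$ and $g_j$ with $F(f_j') = g_j \cmp \alpha_j \cmp F(e')$.
    \item Amalgamability of $i$ gives $h_j$ with $h_1 f_1' i = h_2 f_2' i$.
    \item Set $\beta_j = F(h_j) g_j$. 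Then $\beta_1\alpha_1 F(e' i) e = F(h_1 f_1' i) e = F(h_2 f_2' i) e = \beta_2 \alpha_2 F(e' i) e$.
    \item WAP in the paper is stated with a domain map $A \to A'$; starting from $A$ itself, the same calculation produces the amalgamable map $F(e'i)\cmp e\maps A \to F(X_2)$.
\end{itemize}

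For the final sentence, recall the implications wide $\Rightarrow$ essentially wide $\Rightarrow$ cofinal and star-surjective $\Rightarrow$ absorbing $\Rightarrow$ weakly absorbing. Hence a wide star-surjective $F$ satisfies the hypotheses of all three items.

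The only delicate point is (3): one has to check that the composite guardian is indeed amalgamable. The issue is placing the weak-absorption map $e'$ after $i$, so that the $\C$-amalgamation only needs to equalize the maps after $i$. The other items are routine diagram chases.
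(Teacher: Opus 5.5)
Your proof is correct: all three diagram chases check out, including the cancellation of the isomorphism $u$ in (1) and, in (3), applying weak absorption at $X_1$ (the codomain of the guardian $i$) so that amalgamability of $i$ in $\C$ can be applied to the absorbed legs. The paper states this proposition without proof, and your argument (absorb both legs, amalgamate in $\C$, push forward along $F$) is the routine diagram chase it implicitly relies on.
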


A functor $F\maps \C \to \D$ is {\it $\sigma$-continuous} if  for every sequence $(X_n, f_n)$ in $\C$ and its colimit $(X_\infty, f^\infty_n)$ we have that $(F(X_\infty), F(f^\infty_n))$ is a colimit of $(F(X_n), F(f_n))$ in $\D$.
For categories of structures, where objects are (many-sorted) structures and morphisms are all embeddings, the colimit of a sequence is essentially the union of the structures, and so to check $\sig$-continuity, it is enough to show that for every point $x \in F(X_\infty)$ there is $n \in \omega$ and $x' \in F(X_n)$ such that $f_n^\infty(x') = x$.

\begin{observation}
    If $\C$ is a $\sigma$-complete category of structures, then so is $\C^*$ with the colimit of $(\seq{X_n, p_n}, f_n)$ being $(\seq{X_\infty, p_\infty}, f^\infty_n)$ where $(X_\infty, f^\infty_n)$ is the colimit of $(X_n, f_n)$ and $p_\infty\maps \dom(p_\infty) \to \rng(p_\infty)$ is the colimit-induced isomorphism between the colimit of $(\dom(p_n))$ and the colimit of $(\rng(p_n))$, and similarly for every $\C^{*m}$.
    Hence, if $F\maps \C \to \D$ is a $\sigma$-continuous functor between $\sigma$-complete categories of structures, so is every $F^{*m}\maps \C^{*m} \to \D^{*m}$.
\end{observation}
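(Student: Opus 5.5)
We verify the two claims directly, treating $\seq{X, p}$ as the multi-sorted structure described earlier (sorts $X$, $\dom(p)$, $\rng(p)$, the two inclusion maps, and the operations $p$ and $p^{-1}$), so that colimits of sequences are computed essentially as unions. Fix a sequence $(\seq{X_n, p_n}, f_n)$ in $\C^*$ and let $(X_\infty, f^\infty_n)$ be its colimit in $\C$, which exists by $\sigma$-completeness of $\C$. Each bonding map $f_n$ is a $\C^*$-map. It therefore restricts to $\C$-maps $\dom(p_n) \to \dom(p_{n+1})$ and $\rng(p_n) \to \rng(p_{n+1})$, giving two sequences in $\C$ with colimits $D_\infty$ and $R_\infty$.

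First we identify $D_\infty$ and $R_\infty$ with substructures of $X_\infty$. Since colimits are unions, $D_\infty$ is the union of the images $f^\infty_n[\dom(p_n)]$ inside $X_\infty$, and likewise for $R_\infty$. We must check that these unions are substructures of $X_\infty$, i.e.\ that the canonical maps $D_\infty \to X_\infty$ and $R_\infty \to X_\infty$ are embeddings. This holds because an increasing union of substructures is a substructure: every tuple lies in some stage, where the relations and operations are computed correctly.

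Next we construct $p_\infty$. The relations $f_n \cmp p_n = p_{n+1} \cmp f_n$ on $\dom(p_n)$ say that the $p_n$ form a morphism of sequences from the domain sequence to the range sequence. Their inverses form a morphism in the opposite direction. By the universal property these induce mutually inverse maps between $D_\infty$ and $R_\infty$, so $p_\infty$ is a $\C$-isomorphism, and it lies in $\C$ by $\sigma$-completeness. Concretely, $p_\infty(f^\infty_n(x)) = f^\infty_n(p_n(x))$, which is well defined because the bonding maps are injective and commute with the $p_n$. Hence $\seq{X_\infty, p_\infty} \in \C^*$ and each $f^\infty_n$ is a $\C^*$-map.

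It remains to check the universal property in $\C^*$. Given a cocone $g_n\maps \seq{X_n, p_n} \to \seq{Y, q}$, the induced $\C$-map $g\maps X_\infty \to Y$ satisfies $g \cmp p_\infty = q \cmp g$ on $D_\infty$. This can be checked pointwise on each stage $f^\infty_n[\dom(p_n)]$, where it reduces to $g_n \cmp p_n = q \cmp g_n$. The same argument, run for each $p_i$ separately, handles $\C^{*m}$. In the many-sorted case one does all of this sortwise. Isomorphic copies are covered automatically, since $\C^*$-isomorphisms transport every part of the structure.

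For the second sentence we use the stated criterion for $\sigma$-continuity. Take a point of $F^{*m}(\seq{X_\infty, p_{\infty,1}, \ldots, p_{\infty,m}})$ in any sort. For the $X$-sort it comes from some stage by $\sigma$-continuity of $F$. For the $\dom(F(p_{\infty,i}))$ and $\rng(F(p_{\infty,i}))$ sorts, note that these are $F$ applied to the colimits $D_\infty$ and $R_\infty$ from above, with images taken canonically. Applying $\sigma$-continuity of $F$ to the domain and range sequences yields a preimage at some finite stage.

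The only real subtlety is bookkeeping. We must check that, after replacing $F(\dom(p))$ by its image in $F(X)$ as in the earlier Observation, the two descriptions of the domain sort agree, namely the image of $F(D_\infty)$ and the union of the images of $F(\dom(p_{n,i}))$. We would handle this by noting that both are the image under $F(\text{inclusion})$ of the colimit of the same sequence. This is the step we expect to require the most care, though it is not deep.
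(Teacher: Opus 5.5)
Your proposal is correct and follows the route the paper intends. The paper states this observation without a separate proof: its content is exactly the explicit description you verify, namely that the colimit is the union $X_\infty$, with $\dom(p_\infty)$ and $\rng(p_\infty)$ the unions of the $\dom(p_n)$ and $\rng(p_n)$ and $p_\infty$ the induced isomorphism, and that $\sigma$-continuity of $F^{*m}$ follows by applying $\sigma$-continuity of $F$ sortwise to the point, domain and range sequences, together with functoriality on the square formed by the inclusions and the colimit maps to handle the images.
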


The first two cases of the following proposition come from \cite[Proposition~3.8]{paper1}. The proof of the third case is analogous and we omit it.

\begin{prop} \label{thm:Flim_transfer}
    Let $\C$ and $\D$ be $\sigma$-complete categories of structures, and let $F\maps \C \to \D$ be a $\sigma$-continuous functor that restricts to $F_\fin\maps \C_\fin \to \D_\fin$.
    \begin{enumerate}
        \item If $\C_\fin$ is Fraïssé with limit $U$, and $F_\fin$ is essentially wide and absorbing, then $\D_\fin$ is Fraïssé with limit $F(U)$.
        
        \item If $\C_\fin$ is Fraïssé with limit $U$, $F_\fin$ is cofinal and absorbing, and $\D_\fin$ has AP, then $\D_\fin$ is Fraïssé with limit $F(U)$.
        
        \item If $\C_\fin$ is weak Fraïssé with limit $U$, and $F_\fin$ is cofinal and weakly absorbing, then $\D_\fin$ is weak Fraïssé with limit $F(U)$.
    \end{enumerate}
\end{prop}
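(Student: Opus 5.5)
The plan is to push a weak Fraïssé sequence of $\C_\fin$ through $F$ and check that its image is a weak Fraïssé sequence in $\D_\fin$. Fix a weak Fraïssé sequence $(A_n, f_n)$ in $\C_\fin$ with colimit $U$; it exists by Theorems~\ref{thm:Fclass} and~\ref{thm:Flim}. Put $B_n = F(A_n)$ and $g_n = F(f_n)$. Since $F$ is $\sigma$-continuous, $(F(U), F(f^\infty_n))$ is the colimit of $(B_n, g_n)$. Once we know that $(B_n, g_n)$ is cofinal and weakly absorbing in $\D_\fin$, Theorem~\ref{thm:Fclass} gives that $\D_\fin$ is a weak Fraïssé class, and Theorem~\ref{thm:Flim} identifies its limit with $F(U)$.

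\emph{Cofinality.} Let $X \in \D_\fin$. Cofinality of $F_\fin$ gives $X' \in \C_\fin$ and a $\D$-map $X \to F(X')$. Cofinality of $(A_n)$ gives $X' \to A_n$. Applying $F$ to the latter and composing yields $X \to B_n$. Countability of isomorphism types follows as well: every object of $\D_\fin$ embeds into some $B_n$, and each $B_n$ has only countably many finite substructures.

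\emph{Weak absorption.} This diagram chase is the main step. Fix $m$.
\begin{enumerate}
\item Weak absorption of the sequence at $m$ gives $m' \geq m$. Set $i = f^{m'}_m$.
\item Weak absorption of $F$ at $A_{m'}$ gives a $\D$-map $e$. As remarked in the definition, we may assume $e = F(e')$ for a $\C$-map $e'\maps A_{m'} \to X''$.
\item Apply the sequence property at $m$ to $e'$. This gives $h\maps X'' \to A_k$ with $h \cmp e' \cmp i = f^k_m$. We take $k$ as the guardian index, so the guardian map is $g^k_m = F(f^k_m)$.
\end{enumerate}
Now let $g\maps B_k \to Y$ be any $\D_\fin$-map.
\begin{enumerate}
\item The $\D$-map $g \cmp F(h)\maps F(X'') \to Y$ is a map out of the codomain of $e$. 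Weak absorption of $F$ therefore gives $Y' \in \C$, a $\C$-map $f'\maps A_{m'} \to Y'$, and $d\maps Y \to F(Y')$ with $F(f') = d \cmp g \cmp F(h) \cmp F(e')$.
\item Applying the sequence property at $m$ to $f'$ gives $h'\maps Y' \to A_n$ with $n \geq m$ and $h' \cmp f' \cmp i = f^n_m$.
\end{enumerate}
Then
\[
F(h') \cmp d \cmp g \cmp g^k_m = F(h') \cmp d \cmp g \cmp F(h \cmp e' \cmp i) = F(h' \cmp f' \cmp i) = g^n_m .
\]
So $F(h') \cmp d\maps Y \to B_n$ witnesses weak absorption with guardian $g^k_m$.

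The main obstacle I expect is this bookkeeping: arranging that the guardian supplied by $F$ sits inside the sequence. This is handled by first mapping $X''$ into some $A_k$ using the sequence's own weak absorption. A minor technical point is that $Y'$ must lie in $\C_\fin$. This holds if the weak absorption of $F_\fin$ is taken within the finite categories, as the hypothesis on $F_\fin$ provides.
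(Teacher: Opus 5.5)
Your diagram chase for part (3) is correct. The guardian index $k$ is fixed before $g$ is chosen. Taking the guardian of $F_\fin$ in the form $F(e')$ with $X''\in\C_\fin$ is legitimate, and the composite $F(h')\circ d$ does satisfy $F(h')\circ d\circ g\circ F(f^k_m)=F(f^n_m)$. This is exactly the kind of argument the paper has in mind: it cites the proof of (1)--(2), which pushes a Fra\"{\i}ss\'e sequence of $\C_\fin$ through $F$, and calls (3) analogous.

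The gap is that your proposal proves only part (3). Parts (1) and (2) are never addressed, and their conclusion is stronger: $\D_\fin$ must be Fra\"{\i}ss\'e, i.e.\ have AP, not merely WAP. Nothing in your argument produces AP in $\D_\fin$, so it has to come from elsewhere. In (2) it is a hypothesis. In (1) it follows from Proposition~\ref{thm:fin_transfer}(1), since $\C_\fin$ has AP and $F_\fin$ is essentially wide and absorbing.

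With AP available, the same chase with $m'=m$ and no guardian works. Given $g\maps F(A_m)\to Y$, absorption of $F_\fin$ gives $f'\maps A_m\to Y'$ and $d\maps Y\to F(Y')$ with $F(f')=d\circ g$. Absorption of $(A_n)$ gives $h\maps Y'\to A_n$ with $h\circ f'=f^n_m$. Hence $F(h)\circ d\circ g=F(f^n_m)$. So $(F(A_n),F(f_n))$ is cofinal and absorbing. Under AP, absorbing upgrades to the extension property (amalgamate first, then absorb), so this is a Fra\"{\i}ss\'e sequence with colimit $F(U)$.

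Alternatively, reduce to your part (3). A Fra\"{\i}ss\'e sequence is weak Fra\"{\i}ss\'e, absorbing functors are weakly absorbing, and essentially wide functors are cofinal. So (3) already shows that $F(U)$ is the weak Fra\"{\i}ss\'e limit of $\D_\fin$. Once AP makes $\D_\fin$ Fra\"{\i}ss\'e, its Fra\"{\i}ss\'e limit is also a weak Fra\"{\i}ss\'e limit, so it coincides with $F(U)$ by the uniqueness in Theorem~\ref{thm:Flim}.
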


By applying Proposition~\ref{thm:Flim_transfer}(3) to $F^{*n}$ and by using Theorem~\ref{thm:KR} we immediately obtain the following.

\begin{wn} \label{thm:generic_transfer}
    Let $\C$ and $\D$ be $\sigma$-complete categories of structures, and let $F\maps \C \to \D$ be a $\sigma$-continuous functor that restricts to $F_\fin\maps \C_\fin \to \D_\fin$.
    Suppose that $\C_\fin$ is Fraïssé with limit $U$ and that $\D_\fin$ is Fraïssé with limit $F(U)$.
    
    If $\seq{h_1, \ldots, h_n} \in \Aut(U)^n$ is a generic $n$-tuple, and $F^{*n}_\fin$ is cofinal and weakly absorbing, then $\seq{F(h_1), \ldots, F(h_n)} \in \Aut(F(U))^n$ is a generic $n$-tuple.
\end{wn}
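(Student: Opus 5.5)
The plan is to combine the two cited results. Theorem~\ref{thm:KR} converts genericity on both sides into weak Fraïssé limits, and Proposition~\ref{thm:Flim_transfer}(3) transports weak Fraïssé limits along $F^{*n}$.

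First I check that the hypotheses of Proposition~\ref{thm:Flim_transfer} hold for the lifted functor $F^{*n}\maps \C^{*n} \to \D^{*n}$. By the Observation following the definition of $\sigma$-continuity, $\C^{*n}$ and $\D^{*n}$ are $\sigma$-complete categories of structures. By the same Observation, $F^{*n}$ is $\sigma$-continuous. Since $F$ restricts to $F_\fin\maps \C_\fin \to \D_\fin$, the lifted functor sends finite partial automorphism tuples to finite ones, so it restricts to $(F_\fin)^{*n} = F^{*n}_\fin\maps \C^{*n}_\fin \to \D^{*n}_\fin$.

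Next I apply Theorem~\ref{thm:KR} on the $\C$ side. Since $\C_\fin$ is Fraïssé with limit $U$ and $\seq{h_1,\ldots,h_n}$ is generic, $\C^{*n}_\fin$ is a weak Fraïssé class whose weak Fraïssé limit is $\seq{U,h_1,\ldots,h_n}$. Proposition~\ref{thm:Flim_transfer}(3) applies because $F^{*n}_\fin$ is assumed cofinal and weakly absorbing. It gives that $\D^{*n}_\fin$ is weak Fraïssé with limit $F^{*n}(\seq{U,h_1,\ldots,h_n}) = \seq{F(U),F(h_1),\ldots,F(h_n)}$.

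Finally I apply Theorem~\ref{thm:KR} on the $\D$ side. Here $\D_\fin$ is Fraïssé with limit $F(U)$, and each $F(h_i)$ is an automorphism of $F(U)$ because $F$ is a functor and so preserves isomorphisms. The implication (2)$\Rightarrow$(1) of Theorem~\ref{thm:KR} then says that $\seq{F(h_1),\ldots,F(h_n)}$ is a generic $n$-tuple.

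I expect the main obstacle to be bookkeeping rather than mathematics. One point is that $F^{*n}(\seq{U,h_i})$ is only canonically isomorphic to $\seq{F(U),F(h_i)}$, since images of domains and ranges are replaced by substructures. The other is that the limit produced by Proposition~\ref{thm:Flim_transfer} is unique only up to isomorphism. Both issues are harmless, because genericity is invariant under conjugation by isomorphisms.
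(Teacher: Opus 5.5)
Your proposal is correct and is exactly the paper's argument: the $\C$-side direction (1)$\Rightarrow$(2) of Theorem~\ref{thm:KR}, then Proposition~\ref{thm:Flim_transfer}(3) applied to the lifted functor $F^{*n}$ (with $\sigma$-completeness of $\C^{*n},\D^{*n}$ and $\sigma$-continuity of $F^{*n}$ taken from the Observation), then the $\D$-side direction (2)$\Rightarrow$(1) of Theorem~\ref{thm:KR}. The identification $F^{*n}(\seq{U,h_1,\ldots,h_n}) = \seq{F(U),F(h_1),\ldots,F(h_n)}$ and uniqueness of the limit only up to isomorphism are the bookkeeping points the paper leaves implicit, and you handle them correctly.
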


\subsection{Two-sorted ultrametric spaces}\label{prel:2sort}

By $\Lin$ we denote the category of all linear orders $\seq{L, \leq_L}$ and all embeddings, i.e. maps $f\maps L \to L'$ such that $x \leq_L y$ if and only if $f(x) \leq_{L'} f(y)$.
It is well-known that $\Lin_\fin$ is a Fraïssé class, whose limit is the linearly ordered set of rationals $\Q$.

We shall consider ultrametric spaces with linear orders as sets of distances.
First, let $\Lin^0$ denote the category of linear orders $\seq{L, \leq_L, 0_L}$ with the least element $0_L$ and of embeddings of linear orders preserving the least element.
As usual, the triple $\seq{L, \leq_L, 0_L}$ is often denoted just by $L$, and $\leq_L$ and $0_L$ are often denoted just by $\leq$ and $0$, respectively.
Clearly, $\Lin^0$ is a category equivalent to $\Lin$.

In our previous paper~\cite{paper1}, we studied ultrametric spaces as two-sorted structures of the form $\seq{X, d_X, D_X}$,
where $X$ is a set of points, $D_X$ is a set of distances, i.e. an $\Lin^0$-object, and $\map{d_X}{X \times X}{D_X}$ is an ultrametric.
To be more specific, $d = d_X$ is symmetric, $d(x, x') = 0$ if and only if $x = x'$, and $d$ satisfies the ultrametric triangle inequality 
\[
    d(x, x'') \leq \max\set{d(x,x'), d(x',x'')} \qquad \text{for $x,x',x'' \in X$.}
\]
Again, we shall denote $d_X$ by $d$ and $\seq{X, d_X, D_X}$ by $\seq{X, D_X}$ or just $X$ whenever convenient.
We shall refer to the two sorts as the \emph{point sort} and the \emph{distance sort}, respectively, when convenient.

A \emph{distance-carrying embedding} $f\maps \seq{X, d_X, D_X} \to \seq{Y, d_Y, D_Y}$ (\emph{dc-embedding} for short) is a one-to-one map $f\maps X \to Y$ together with an $\Lin^0$-map $D_f\maps D_X \to D_Y$ such that
\[
    d_Y(f(x), f(x')) = D_f(d_X(x, x')) \qquad \text{for } x, x' \in X.
\]
Note that an \emph{isometric embedding} corresponds to a dc-embedding $f\maps X \to Y$ such that $D_f$ is the inclusion $D_X \leq D_Y$, i.e. we have $d_Y(f(x), f(x')) = d_X(x, x')$.

Let $\Ult$ denote the category of ultrametric spaces and distance-carrying embeddings.
Clearly the assignment $(f\maps X \to Y) \mapsto (D_f\maps D_X \to D_Y)$ yields a functor $D\maps \Ult \to \Lin^0$ with the restriction $D_\fin \maps \Ult_\fin \to \Lin^0_\fin$ (which we may denote just by $D$).
Note that a $\Ult_\fin$-object is an ultrametric space $X$ such that both the set of points $X$ and the set of distances $D_X$ are finite.

Let $\UltIso^E \subs \Ult$ for $E \in \Lin^0$ denote the subcategory of all ultrametric spaces $X$ with $D_X \leq E$ and all isometric embeddings.
We say that an ultrametric space $X$ is \emph{precise} if $d_X\maps X \times X \to D_X$ is surjective, i.e. every distance is achieved.
By $\UltIso^E_{\rm prec} \subs \UltIso^E$
  we mean the full subcategory consisting of precise spaces. 
In this view, the category of classical ultrametric spaces is just $\UltIso^{\Rpos}_{\rm prec}$.
A {\it precise partial dc-automorphism} $p$ of $A$ is a  partial automorphism such that all $\dom(p), \rng(p) \leq A$ are precise.
Then for every $r \in \dom(D_p)$ we have $D_p(r) = D_p(d(x, y)) = d(p(x), p(y))$ for some $x, y \in A$, i.e.  we can ignore the distance part of $D_p$.

Since every two-sorted ultrametric space $X$ can be viewed both as a member of $\Ult$ and $\UltIso^E$ (for any $E \in \Lin^0$ with $D_X \leq E$), we make a convention that $\Aut(X) = \Aut_{\Ult}(X)$ denotes the dc-automorphism group, while $\Iso(X) = \Aut_{\UltIso^E}(X)$ denotes the isometry group, which does not depend on $E$.
We say that $X$ is \emph{dc-homogeneous} if it is homogeneous for $\Ult_\fin$, and that $X$ is \emph{iso-homogeneous} if it is homogeneous for $\UltIso^E_\fin$.

Recall that the class of classical finite rational ultrametric spaces with isometric embeddings, which in our setup is isomorphic to the category $(\UltIso^{\Qpos}_{\rm prec})_\fin$, is Fraïssé and that its Fraïssé limit is the \emph{countable rational Urysohn ultrametric space} $\U_\Q$.
In the previous paper~\cite{paper1}, we showed that finite two-sorted ultrametric spaces also  form a Fraïssé class, with the same limit.

\begin{tw}[{\cite[Theorem~3.11]{paper1}}] \label{thm:U_Fraisse}
    $\Ult_\fin$ is a Fraïssé class, and its Fraïssé limit $\U$ is dc-isomorphic to $\U_\Q$.
    In particular, $\U$ is both dc-homogeneous and iso-homogeneous.
\end{tw}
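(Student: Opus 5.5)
We plan to deduce the theorem from the classical Fraïssé theorem for rational ultrametric spaces, using the transfer principle Proposition~\ref{thm:Flim_transfer}. The natural functor goes from the classical category $\C := \UltIso^{\Qpos}_{\rm prec}$ into $\Ult$. It is the inclusion functor $F\maps \C \to \Ult$, which regards an isometric embedding as a dc-embedding whose distance part is the inclusion of the (finite) value sets. This functor is $\sigma$-continuous, since colimits in both categories are unions and every point or distance of the union already appears at a finite stage. It also restricts to $F_\fin\maps \C_\fin \to \Ult_\fin$. By the classical result, $\C_\fin$ is Fraïssé with limit $\U_\Q$, regarded with $D_{\U_\Q} = \Qpos$.

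First, we check that $F_\fin$ is cofinal. Let $X \in \Ult_\fin$. We embed $D_X$ into $\Qpos$ by an order embedding preserving $0$. This turns $X$ into a finite rational ultrametric space $X'$. If $D_X$ contains unrealised distances, we adjoin for each of them a new two-point "witness" far away: a point at distance exactly that value from some point of $X$, with all other distances chosen to keep the space ultrametric (for instance, the maximum of the given distance and the existing one). This makes the space precise and gives a dc-embedding $X \to F(X'')$ with $X'' \in \C_\fin$.

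Second, and this is the main point, we check that $F_\fin$ is absorbing. Take $X' \in \C_\fin$ and a dc-embedding $f\maps F(X') \to Y$ with $Y \in \Ult_\fin$. We need $Y' \in \C_\fin$, an isometric embedding $f'\maps X' \to Y'$, and a dc-embedding $g\maps Y \to F(Y')$ with $F(f') = g \cmp f$. We define $D_g\maps D_Y \to \Qpos$ as an order embedding extending $D_f^{-1}$ on $D_f[D_{X'}]$. This is possible because the finite partial order-isomorphism $D_f[D_{X'}] \to D_{X'} \subs \Qpos$ extends to all of $D_Y$ by the density of $\Qpos$ and unboundedness above. We transport $Y$ along $D_g$ to a rational ultrametric space and then make it precise by adding witnesses as in the cofinality step. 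Call the result $Y'$, let $g$ be the obvious dc-embedding, and let $f' := g \cmp f$. Then $f'$ is isometric because $D_g \cmp D_f$ is the identity on $D_{X'}$. The only delicate point is this extension of the order isomorphism, which uses that $D_{X'}$ sits inside the dense order $\Qpos$ without endpoints above $0$. We also need $0$ to map to $0$, and this is automatic.

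Since $\Ult_\fin$ already has AP (it has countably many isomorphism types, and amalgamating distances and then points in the canonical way works), we apply Proposition~\ref{thm:Flim_transfer}(2). If we prefer not to prove AP separately, we use it to derive AP via Proposition~\ref{thm:fin_transfer}: essentially wide fails, but CAP plus the fact that every finite object is an amalgamation base would follow from the same absorbing argument. Either way, $\Ult_\fin$ is Fraïssé with limit $F(\U_\Q)$, that is, $\U \cong \U_\Q$ as dc-structures. Finally, dc-homogeneity is part of the Fraïssé conclusion. Iso-homogeneity follows because $\U_\Q$ is iso-homogeneous by the classical theorem and the identification is the identity on points with $D_{\U} = \Qpos$.
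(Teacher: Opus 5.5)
Your proof is correct. It follows the route this paper's framework is set up for (the paper itself only cites \cite{paper1} for the proof): Proposition~\ref{thm:Flim_transfer}(2) applied to the inclusion of finite precise rational ultrametric spaces into $\Ult_\fin$, with cofinality and absorption obtained by re-embedding distance sets into $\Qpos$ and adding witnesses, and with AP of $\Ult_\fin$ obtained by amalgamating the distance sets in $\Lin^0_\fin$, lifting via Observation~\ref{obs:liftings}, and applying Lemma~\ref{thm:complete_isometric_amalgamation} (a finite $E$ is complete with $0$ isolated). That AP step is what carries the proof, so you should write it out rather than leave it parenthetical.

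Drop the aside about avoiding a separate AP proof. Absorption only shows that objects in the image of $F$, i.e.\ precise spaces, are amalgamation bases. That gives CAP via Proposition~\ref{thm:fin_transfer}(2), but not AP for non-precise spaces, so the direct amalgamation argument is genuinely needed.
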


\section{Description of a single orbit}

In this section, we give a complete description of types of single orbits of automorphisms of two-sorted ultrametric spaces.

Let $\seq{X, f}$ be a total dc-automorphism.
We shall analyze the orbit of a point $a \in X$, which consists of the points $a_n = f^n(a)$ for $a \in \Z$.
First let us denote the sequence of the forward distances from $a$ by $(r_n)$, i.e. $r_n = d(a_0, a_n)$ for every $n \geq 1$.

Let $L \geq 1$ be the smallest integer such that $r_L = 0$, if it exists; otherwise put $L = \infty$.
We call $L$ the \emph{length} of the orbit.
Clearly, $L < \infty$ means that $f^\Z(a)$ is a \emph{closed} orbit of length $L$ as $a_L = a_0$.

Let $N \geq 1$ be the smallest integer such that $D_f(r_N) \neq r_N$, if it exists; otherwise put $N = \infty$.
Note that for every $n < N$ the distance $r_n$ is a fixed point of $D_f$, and so $d(a_k, a_{k + n}) = r_n$ for every $k \in \Z$.
On the other hand, $r_N$ induces the monotone orbit of $D_f$ consisting of the distances $(D_f)^k(r_N) = d(a_k, a_{k + N})$ for $k \in \Z$.
Hence, we call the orbit \emph{monotone} if $N < \infty$, and we call $N$ the \emph{monotone period} of the orbit.
A monotone orbit is either \emph{decreasing}, i.e. when $D_f(r_N) < r_N$, or \emph{increasing}, i.e. when $D_f(r_N) > r_N$.
We also call $r_n$ a \emph{fixed distance} or a \emph{monotone distance} if $D_f(r_n) = r_n$ or $D_f(r_n) \neq r_n$, respectively.

The cases of being closed or monotone are mutually exclusive since a closed orbit attains finitely many distances, while a monotone orbit attains infinitely many.
In the third case, when $L = N = \infty$, we call the orbit \emph{horizontal}.
Horizontal orbits are of two subtypes depending on whether the set $\set{r_n: n \geq 1}$ of all attained distances is finite or infinite.

\begin{observation} \label{obs:invariant}
    The length $L$, the monotone period $N$, the set $\set{r_n: 1 \leq n < N}$ of fixed distances, and so the classification of the orbit as being closed, monotone, or horizontal (with finitely or infinitely many distances) depends only on the orbit $f^\Z(a)$ as a whole, not on the particular choice of the point $a$.
    This is because $r_n = d(a_k, a_{k + n})$ for every $k \in \Z$ whenever $D_f(r_n) = r_n$ (which includes $r_n = 0$).

    Moreover, the invariants do not depend on the orientation of $f$, i.e. if we replace $f$ by $f^{-1}$, we obtain the same $L$, $N$, and $\set{r_n: 1 \leq n < N}$. Only an increasing monotone orbit becomes decreasing monotone and vice versa.
    This is because if $(a'_n)$ and $(r'_n)$ denote the points and distances with respect to $f^{-1}$, we have $r'_n = d(a'_0, a'_n) = d(a_0, a_{-n}) = (D_f)^{-n}(d(a_n, a_0)) = (D_f)^{-n}(r_n) = r_n$ whenever $D_f(r_n) = r_n$.
\end{observation}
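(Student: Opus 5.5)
The plan is to derive every claim of Observation~\ref{obs:invariant} from one identity. For each $n \geq 1$ and $k \in \Z$, the map $f^k$ is a dc-automorphism with distance part $(D_f)^k$, so
\[
    d(a_k, a_{k+n}) = (D_f)^k(d(a_0, a_n)) = (D_f)^k(r_n).
\]
If $r_n$ is a fixed point of $D_f$, this gives $d(a_k, a_{k+n}) = r_n$ for all $k \in \Z$. In particular $r_n = 0$ is fixed, since $D_f$ is an $\Lin^0$-map. Note also that $(D_f)^k(r) = r$ for all $k$ exactly when $D_f(r) = r$.

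First I show that the invariants do not depend on the base point. Replace $a$ by $b = a_k$, and write $r^b_n = d(a_k, a_{k+n}) = (D_f)^k(r_n)$. Since $(D_f)^k$ is a bijection on $D_X$ fixing $0$, we have $r^b_n = 0$ if and only if $r_n = 0$, so $L$ is unchanged. Because $(D_f)^k$ commutes with $D_f$, $r^b_n$ is fixed by $D_f$ if and only if $r_n$ is, so $N$ is unchanged. For $n < N$ the distance $r_n$ is fixed, hence $r^b_n = r_n$, and the set of fixed distances is unchanged. The classification then follows. Closed means $L < \infty$ and monotone means $N < \infty$, so both are determined. For the horizontal case, where all $r_n$ are fixed, we get $r^b_n = r_n$ for every $n$, so the set of attained distances, and hence whether it is finite, is the same for $b$.

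Next I show independence from the orientation. For $f^{-1}$, with $a'_n = a_{-n}$, the same identity with $k = -n$ gives
\[
    r'_n = d(a_0, a_{-n}) = (D_f)^{-n}(d(a_n, a_0)) = (D_f)^{-n}(r_n).
\]
As before, $r'_n = 0$ if and only if $r_n = 0$, and $r'_n$ is fixed by $D_{f^{-1}} = (D_f)^{-1}$ if and only if $r_n$ is fixed by $D_f$. Fixed distances coincide, so $L$, $N$ and $\set{r_n : 1 \leq n < N}$ agree. In the horizontal case all $r'_n = r_n$, so the finiteness subtype is preserved as well.

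Finally, for the monotone case I compare the direction of $r_N$ under $D_f$ with that of $r'_N = (D_f)^{-N}(r_N)$ under $(D_f)^{-1}$. Suppose $D_f(r_N) < r_N$. An order automorphism maps this inequality to the same inequality, so $D_f(x) < x$ for every $x$ in the $D_f$-orbit of $r_N$. Since $r'_N$ lies in that orbit, $D_f(r'_N) < r'_N$. Applying the automorphism $(D_f)^{-1}$ gives $r'_N < (D_f)^{-1}(r'_N)$. So the orbit is increasing for $f^{-1}$, and symmetrically an increasing orbit becomes decreasing.

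The only step that needs care is that fixedness and the sign of $D_f(r) - r$ are constant along $D_f$-orbits; this follows directly from $D_f$ being an order automorphism.
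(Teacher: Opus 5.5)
Your proposal is correct and follows essentially the same argument as the paper. You use the identity $d(a_k,a_{k+n}) = (D_f)^k(r_n)$ for independence from the base point, and the computation $r'_n = (D_f)^{-n}(r_n)$ for independence from the orientation. You also spell out what the paper leaves implicit: that fixedness (hence $N$) transfers via injectivity and commutation, and why the direction of a monotone orbit flips.
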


Let us adopt a convention that $n \leq N$ means $n \in \set{1, 2, \ldots, N}$ if $N < \infty$, and $n \in \set{1, 2, \ldots}$ if $N = \infty$.

\begin{lm} \label{thm:decreasing_distances}
    If $r_n \notin \set{r_m: m < n}$ for some $1 < n \leq N$, then $r_n < \min\set{r_m: m < n}$.\kern-0.4em
\end{lm}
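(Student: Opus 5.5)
Fix $1 < n \leq N$ with $r_n \notin \set{r_m : m < n}$, and fix an arbitrary $m < n$; it suffices to show $r_n < r_m$. The plan is to compare $r_n$ with $r_m$ and $r_{n-m}$ via the ultrametric triangle inequality in the triangle $a_0, a_m, a_n$. Both $m$ and $n - m$ lie in $\set{1, \ldots, n-1}$, and since $n \leq N$ we have $n - m < N$ and $m < N$. Hence $r_m$ and $r_{n-m}$ are fixed distances of $D_f$, so by the remark preceding Observation~\ref{obs:invariant} we get $d(a_k, a_{k+j}) = r_j$ for every $k \in \Z$ and $j \in \set{m, n-m}$. In particular $d(a_m, a_n) = d(a_m, a_{m + (n-m)}) = r_{n-m}$.

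In the triangle $a_0, a_m, a_n$ the three sides are then $d(a_0,a_m) = r_m$, $d(a_m,a_n) = r_{n-m}$ and $d(a_0,a_n) = r_n$. Every triangle in an ultrametric space is isosceles with the two largest sides equal. By hypothesis $r_n \neq r_m$ and $r_n \neq r_{n-m}$. So $r_n$ cannot be one of the two equal largest sides. Therefore $r_m = r_{n-m}$ is the maximum and $r_n < r_m$.

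Since $m < n$ was arbitrary, $r_n < \min\set{r_m : m < n}$. There is essentially no obstacle. The only point needing care is the bound $n - m < N$, which ensures that $r_{n-m}$ is a fixed distance and so is realized between $a_m$ and $a_n$. This is exactly where the hypothesis $n \leq N$ is used: when $n = N$, both $m$ and $n-m$ are still strictly below $N$. If $N < n$ were allowed, the distance $d(a_m, a_n)$ would be $(D_f)^m(r_{n-m})$ rather than $r_{n-m}$, and the argument would break.
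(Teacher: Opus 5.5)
Your proof is correct and matches the paper's: you consider the triangle $a_0, a_m, a_n$, use $n - m < N$ to get $d(a_m, a_n) = r_{n-m}$, and conclude from $r_n \neq r_m, r_{n-m}$ and the ultrametric inequality that $r_n < r_m = r_{n-m}$. Your remark on where the hypothesis $n \leq N$ is needed is accurate.
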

\begin{proof}
    Let $1 \leq m < n$.
    Note that $d(a_m, a_n) = d(a_0, a_{n - m})$ since $n - m < N$.
    We consider the triangle with vertices $a_0, a_m, a_n$, which attains the distances $r_m, r_{n - m}, r_n$.
    Since, $r_n \neq r_m, r_{n - m}$, by the ultrametric triangle inequality we have $r_n < r_m = r_{n - m}$.
\end{proof}

Let $(n_i)$ be the increasing enumeration of the set $\set{n \leq N: r_n \notin \set{r_m: m < n}}$ of indices where a new distance (possibly up to $r_N$) appears.
So $n_0 = 1$, and we either obtain a finite sequence $(n_i)_{0 \leq i \leq k}$, or an infinite sequence $(n_i)_{0 \leq i < \infty}$.
It is easy to check that 
for a closed orbit of length $L$, $(n_i)$ is finite with $n_k = L$,
for a monotone orbit of period $N$, $(n_i)$ is finite with $n_k = N$, and
for a horizontal orbit, $(n_i)$ can be either finite, or infinite, depending on the set of attained distances.
We call $(n_i)$ the \emph{constructing sequence} for the orbit $f^\Z(a)$.
Again, it depends only on the orbit as a whole and agrees with the constructing sequence for $f^{-1}$ by Observation~\ref{obs:invariant}.

\begin{lm} \label{thm:constructing_sequence}
    Every element $n_{i + 1}$ of the constructing sequence is a non-trivial positive multiple of $n_i$, i.e. $n_{i + 1} = l_i n_i$ for some $l_i \in \N \setminus \set{0, 1}$.
\end{lm}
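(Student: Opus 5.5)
The plan is to show that for $n_i \le m < n_{i+1}$ the distance $r_m$ depends only on $m$ modulo $n_i$. More precisely, I would prove $r_m = r_{m \bmod n_i}$ whenever $m \not\equiv 0 \pmod{n_i}$, and $r_m = r_{n_i}$ whenever $m \equiv 0 \pmod{n_i}$ with $m>0$. The lemma then follows immediately. Indeed, $r_{n_{i+1}}$ is a new distance, so by this claim $n_{i+1}$ cannot be a non-multiple of $n_i$, since otherwise $r_{n_{i+1}} = r_{n_{i+1} \bmod n_i}$ is an old distance. It also cannot equal $n_i$ itself, because $n_{i+1} > n_i$. Hence $n_{i+1} = l_i n_i$ with $l_i \ge 2$.

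The basic tool is the observation already used in Lemma~\ref{thm:decreasing_distances}: for $m < N$ the distance $r_m$ is fixed by $D_f$, so $d(a_k, a_{k+m}) = r_m$ for every $k$. I would fix $i$ and set $s = r_{n_i}$. By Lemma~\ref{thm:decreasing_distances} and the definition of the constructing sequence, $s$ is strictly smaller than every $r_m$ with $1 \le m < n_i$, and every $r_m$ with $m < n_{i+1}$ lies in $\{r_1, \dots, r_{n_i}\}$. Hence $s$ is the minimum of $r_m$ over $1 \le m < n_{i+1}$.

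I would then argue by induction on $m$ in the range $n_i \le m < n_{i+1}$, all such $m$ being $< N$ so the corresponding distances are fixed. Write $m = m' + n_i$ and consider the triangle $a_0, a_{n_i}, a_m$, whose sides are $r_{n_i} = s$, $d(a_{n_i}, a_m) = r_{m'}$ (using $m' < N$), and $r_m$. If $m' = 0$ there is nothing to prove. Otherwise $r_{m'}$ is known by induction, or by $m' < n_i$, to equal $r_{m' \bmod n_i}$ if $n_i \nmid m'$, and $s$ otherwise.

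In the first case $r_{m'} > s$ strictly, so the isosceles property forces $r_m = r_{m'} = r_{m \bmod n_i}$. In the second case the triangle inequality gives $r_m \le s$. Since $s$ is the minimal distance attained below $n_{i+1}$, this gives $r_m = s$.

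The only delicate point is making sure every index involved stays below $N$, so that the translation invariance $d(a_k, a_{k+m}) = r_m$ may be used. This holds because $m < n_{i+1} \le N$. It also needs checking that the case $N < \infty$ with $n_{i+1} = N$ causes no trouble: there only $r_N$ itself may be a monotone distance, and $r_N$ never enters the induction. The remaining steps are routine.
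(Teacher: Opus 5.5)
Your argument is correct, but it takes a different route from the paper. The paper argues directly by contradiction. It writes $n_{i+1} = l_i n_i + j$ with $0 \le j < n_i$, and chains the equalities $d(a_{kn_i}, a_{(k+1)n_i}) = r_{n_i}$ for $k < l_i$ to get $r_{l_i n_i} \le r_{n_i}$. If $j > 0$, the single triangle $a_0, a_{l_i n_i}, a_{n_{i+1}}$ has sides $r_{l_i n_i}, r_{n_{i+1}}, r_j$ with $r_{n_{i+1}} < r_{l_i n_i} \le r_{n_i} < r_j$, which is impossible in an ultrametric space.

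You instead step through $[n_i, n_{i+1})$ in increments of $n_i$ and establish the periodicity $r_m = r_{m \bmod n_i}$ (respectively $r_m = r_{n_i}$ when $n_i \mid m$). This costs an induction but yields more: iterated over $i$, it is essentially the content of Lemma~\ref{thm:horizontal_distances} on initial segments. The paper's argument is a one-shot contradiction tailored to this lemma and defers the distance formula to Lemma~\ref{thm:horizontal_distances}.

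One small slip: you state your claim only for $m < n_{i+1}$, but then apply it at $m = n_{i+1}$. This is harmless, because the non-divisible case of your inductive step never uses the minimality of $s$. It only needs $r_{m'} > s$ for $m' = n_{i+1} - n_i < n_{i+1}$, together with $d(a_{n_i}, a_{n_{i+1}}) = r_{m'}$, which holds since $m' < N$. So just run one more step of the induction at $m = n_{i+1}$. Only the divisible case breaks down there, which is exactly what the lemma asserts.
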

\begin{proof}
    By definition we have $n_{i + 1} > n_i$, and so $n_{i + 1}$ can be uniquely written as $l_i n_i + j$ for $l_i \in \N \setminus \set{0}$ and $j \in \set{0, \ldots, n_i - 1}$.
    Note that $d(a_0, a_{n_i}) = d(a_{n_i}, a_{2n_i}) = \cdots = d(a_{(l_i - 1) n_i}, a_{l_i n_i}) = r_{n_i}$, and so by the ultrametric triangle inequality we have $r_{l_i n_i} \leq r_{n_i}$.
    If $j > 0$, then the triangle with the vertices $a_0, a_{l_i n_i}, a_{n_{i + 1}}$ attains the distances $r_{l_i n_i}, r_{n_{i + 1}}, r_j$, and we have have $r_{n_{i + 1}} < r_{l_i n_i} \leq r_{n_i} < r_j$, which is a contradiction.
    Hence, $j = 0$ and $l_i \neq 1$ since $n_{i + 1} \neq n_i$.
\end{proof}

\begin{lm} \label{thm:horizontal_distances}
     Let $m < n \in \Z$.  Suppose that $N=\infty$ or $N<\infty$ and $N$ does not divide $n - m$. Then  $d(a_m, a_n) = r_{n_i}$ where $n_i$ is the largest element of the constructing sequence that divides $n - m$.
\end{lm}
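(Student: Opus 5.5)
The plan is to reduce to distances of the form $r_t$ with $t < N$, and then handle those by strong induction on $t$ using Lemmas~\ref{thm:decreasing_distances} and~\ref{thm:constructing_sequence}. Put $k = n - m \geq 1$. Two facts will be used repeatedly.
\begin{itemize}
\item For $t < N$ the distance $r_t$ is fixed by $D_f$, so $d(a_j, a_{j+t}) = r_t$ for all $j \in \Z$.
\item By Lemma~\ref{thm:decreasing_distances}, each $r_{n_j}$ is strictly smaller than every $r_s$ with $s < n_j$.
\end{itemize}

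\emph{Case $k < N$.} Here $d(a_m, a_n) = r_k$, so it suffices to show that $r_k = r_{n_i}$, where $n_i$ is the largest constructing element dividing $k$. I argue by strong induction on $k$. Let $n_j$ be the largest constructing element with $n_j \leq k$, and write $k = l n_j + t$ with $0 \leq t < n_j$.

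If $t = 0$, the chain $a_0, a_{n_j}, \dots, a_{l n_j}$ has consecutive distances $r_{n_j}$. These are fixed distances because all indices are below $N$. The ultrametric inequality gives $r_k \leq r_{n_j}$. By the definition of $n_j$, no new distance appears at indices in $(n_j, k]$, and $r_{n_j}$ is minimal among $\set{r_s : s \leq n_j}$. Hence $r_k = r_{n_j}$. Since $n_j$ divides $k$ and every later constructing element exceeds $k$, the claim follows.

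If $t > 0$, I use the triangle $a_0, a_{l n_j}, a_k$. By the previous subcase $r_{l n_j} = r_{n_j}$, and $d(a_{l n_j}, a_k) = r_t > r_{n_j}$. By isoscelesness, $r_k = r_t$. By induction $r_t = r_{n_i}$, where $n_i$ is the largest constructing element dividing $t$. Since $n_i \leq t < n_j$ and constructing elements successively divide each other (Lemma~\ref{thm:constructing_sequence}), $n_i$ divides $n_j$ and hence divides $k$. Any constructing $n_{i'}$ with $i < i' \leq j$ divides $l n_j$; so if it divided $k$ it would divide $t$, contradicting the maximality of $n_i$. Constructing elements beyond $n_j$ exceed $k$. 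Thus $n_i$ is also the largest constructing element dividing $k$.

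\emph{Case $N < \infty$ and $k > N$ with $N \nmid k$.} Write $k = lN + t$ with $0 < t < N$. The distance $r_N$ is new, since otherwise it would equal a fixed distance $r_s$ with $s<N$ and so be fixed itself. Hence $r_N < r_s$ for all $s < N$. Because each $r_s$ ($s<N$) is fixed by the order automorphism $D_f$, every iterate satisfies $D_f^j(r_N) < r_s$.

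The consecutive distances along $a_m, a_{m+N}, \dots, a_{m+lN}$ are iterates $D_f^j(r_N)$. So $d(a_m, a_{m+lN}) < r_t$, while $d(a_{m+lN}, a_n) = r_t$. By isoscelesness, $d(a_m, a_n) = r_t$, which equals $r_{n_i}$ for the largest constructing $n_i$ dividing $t$ by the first case. All constructing elements divide $N = n_{\text{last}}$, and $N$ itself divides neither $t$ nor $k$. Hence a constructing element divides $t$ iff it divides $k$, which finishes the proof.

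The main obstacle I expect is the divisibility bookkeeping in the inductive step: checking that the largest constructing divisor of $t$ coincides with that of $k$. This relies on the chain structure from Lemma~\ref{thm:constructing_sequence}.
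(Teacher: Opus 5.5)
Your proof is correct, but it is organized differently from the paper's. The paper first reduces to $m=0$ by applying $(D_f)^m$ and using $n_i<N$. It writes $n=l n_i$ with $n_i$ the largest constructing divisor, and then argues directly, without induction. If $n<n_{i+1}$, the equality $r_n=r_{n_i}$ follows from the chain bound and the fact that no new distance appears before $n_{i+1}$. Otherwise it reduces $n$ modulo the \emph{next} constructing element $n_{i+1}$. The remainder $n'$ is then automatically a multiple of $n_i$ lying below $n_{i+1}$, so $r_{n'}=r_{n_i}$ is immediate.

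You instead reduce modulo the largest constructing element $n_j\le k$. That leaves a remainder $t$ whose largest constructing divisor is not known in advance, so you need strong induction and the divisibility bookkeeping. In exchange, you split off the case $k>N$ and treat it explicitly, observing that every iterate $(D_f)^j(r_N)$ lies strictly below all fixed distances $r_s$, $s<N$.

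This makes your argument more careful than the paper's at one point. In the paper's last case with $n_{i+1}=N$, the stated bound $r_{l'n_{i+1}}\le r_{n_{i+1}}$ relies on all chain steps being equal to $r_{n_{i+1}}$. That fails here, since the steps are the iterates $(D_f)^{jN}(r_N)$. For an increasing monotone orbit and $l'\ge 2$ one even has $r_{l'N}>r_N$. The inequality actually needed, $r_{l'n_{i+1}}<r_{n'}$, still holds, and precisely by your argument.

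So the paper's route is shorter and induction-free, while yours closes this small gap in the monotone case.
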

\begin{proof}
    It is enough to prove the claim in the case $m = 0$ since then $d(a_m, a_n) = d(f^m(a_0), f^m(a_{n - m})) = (D_f)^m(d(a_0, a_{n - m})) = (D_f)^m(r_{n_i}) = r_{n_i}$, where the last equality holds since we have $n_i < N = n_k$ by the assumption if $N < \infty$.
    Suppose that $m = 0$, and so $n_i$ is the largest element of the constructing sequence that divides $n - m = n$, i.e. $n = l n_i$ for some $l \geq 1$.
    We want to show $r_n = r_{n_i}$.
    As in the proof of Lemma~\ref{thm:constructing_sequence}, we have $r_n = r_{l n_i} \leq r_{n_i}$.
    If $n_i$ is the last element of the constructing sequence, we have $r_n \geq r_{n_i}$ and we are done.
    If $n \leq n_{i + 1}$ (and by the assumption necessarily $n < n_{i + 1}$), we also have $r_n = r_{n_i}$ as $n_{i + 1}$ is the first time the distance drops below $r_{n_i}$.
    Finally, if $n > n_{i + 1}$, we can write $n = l' n_{i + 1} + n'$ for $l' \geq 1$ and $1 \leq n' < n_{i + 1}$.
    Then the triangle with the vertices $a_0, a_{l' n_{i + 1}}, a_n$ attains the distances $r_{l' n_{i + 1}}, r_n, r_{n'}$, and we have $r_{l' n_{i + 1}} \leq r_{n_{i + 1}} < r_{n'} = r_n$.
    But since both $n$ and $n_{i + 1}$ are divisible by $n_i$ (the latter by Lemma~\ref{thm:constructing_sequence}), we have $r_{n'} = r_{l'' n_i} = r_{n_i}$, for some $l''\geq 1$, by the previous case for $n'$ instead of $n$.
\end{proof}

The previous lemma describes the ultrametric for all pairs $a_m \neq a_n$ of a closed or horizontal orbit and for some pairs of a monotone orbit, with the values being all fixed distances of the orbit.
Unless the orbit is horizontal with infinitely many distances, we have a finite constructing sequence $(n_i)_{0 \leq i \leq k}$.
Then $r_{n_k} = 0$ if the orbit is closed, $r_{n_k}$ is the smallest positive distance if the orbit is horizontal, and $r_{n_k} = r_N$ is a monotone distance if the orbit is monotone.
Hence, for a closed or monotone orbit, the set of all positive fixed distances is $\set{u_1, \ldots, u_k} := \set{r_{n_0}, \ldots, r_{n_{k - 1}}}$, while for a horizontal orbit, the set of all positive fixed distances is $\set{u_1, \ldots, u_{k + 1}} := \set{r_{n_0}, \ldots, r_{n_k}}$ in the finite case and $\set{u_1, u_2, \ldots} := \set{r_{n_0}, r_{n_1}, \ldots}$ in the infinite case.

For monotone orbits we need to consider the monotone distances.
It turns out to be convenient to associate the monotone distances with the points via the map $\rho\maps f^\Z(a) \to D_X$ defined by
\[
    \rho(a_n) = \min\set{d(a_{n - N}, a_n), d(a_n, a_{n + N})}.
\]
Then $r_N$ becomes either $\rho(a_0)$ or $\rho(a_N)$, depending on whether the monotone orbit is decreasing or increasing, but we have $D_f(\rho(a_n)) = \rho(a_{n + 1})$ in both cases, and both $f$ and $f' = f^{-1}$ give the same map $\rho$: it is easy to check that $\rho'(a'_n) = \rho(a_{-n})$ and so $\rho' = \rho$ as $a'_n = a_{-n}$ for any $n \in \Z$.

If we put $v_n = \rho(a_n)$ for a decreasing monotone orbit and $v_n = \rho(a_{-n})$ for an increasing monotone orbit, $\set{v_n}_{n \in \Z}$ becomes the set of all monotone distances enumerated in the decreasing order.

\begin{lm} \label{thm:monotone_distances}
    In a monotone orbit we have $d(a_m, a_n) = \max\set{\rho(a_m), \rho(a_n)}$ for every $m < n \in \Z$ such that $N$ divides $n - m$.
\end{lm}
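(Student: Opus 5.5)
The plan is to reduce everything to the sequence of ``$N$-step'' distances $s_k := d(a_k, a_{k + N})$, $k \in \Z$, and then run an induction along the chain $a_m, a_{m + N}, a_{m + 2N}, \ldots, a_n$ using the isosceles property of ultrametric triangles.

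First I would record that $s_k = d(f^k(a_0), f^k(a_N)) = (D_f)^k(r_N)$. Since $D_f$ is an order automorphism of $D_X$ and $r_N$ is not fixed, the sequence $(s_k)$ is strictly monotone. If the orbit is decreasing, i.e.\ $D_f(r_N) < r_N$, then applying the order automorphism $(D_f)^k$ gives $s_{k+1} < s_k$ for all $k \in \Z$. If it is increasing, the inequalities are reversed. By definition $\rho(a_n) = \min\set{s_{n - N}, s_n}$. Hence in the decreasing case $\rho(a_n) = s_n$, and in the increasing case $\rho(a_n) = s_{n - N}$.

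I would treat the decreasing case directly. Alternatively, the increasing case can be reduced to it by passing to $f^{-1}$: this swaps increasing and decreasing, and by the remark preceding the lemma it leaves $\rho$ unchanged, while the statement is symmetric in $m$ and $n$. In the decreasing case the right-hand side equals $\max\set{s_m, s_n} = s_m$, since $m < n$. So, writing $n = m + jN$ with $j \geq 1$, it suffices to show $d(a_m, a_{m + jN}) = s_m$.

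This follows by induction on $j$. The case $j = 1$ is the definition of $s_m$. For $j \geq 2$, consider the triangle $a_m, a_{m + N}, a_n$. Its sides are $d(a_m, a_{m+N}) = s_m$ and, by the induction hypothesis applied to the pair $m + N < n$, $d(a_{m+N}, a_n) = s_{m+N}$. Since $s_{m+N} < s_m$, the two sides differ, so the isosceles property of ultrametric triangles forces the third side to equal the larger one, i.e.\ $d(a_m, a_n) = s_m$.

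For the increasing case done directly, the same argument works mirrored from the other end. Here the right-hand side is $\max\set{s_{m-N}, s_{n-N}} = s_{n - N} = d(a_{n-N}, a_n)$, and one inducts using the triangle $a_m, a_{n - N}, a_n$.

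There is no real obstacle. The only points needing care are the strict monotonicity of $(s_k)$ (which uses that $D_f$ is an order automorphism, not merely order preserving) and correctly identifying which of the two terms realizes the minimum in the definition of $\rho$ in each of the two cases.
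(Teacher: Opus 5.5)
Your proof is correct and takes essentially the paper's route: handle the decreasing case directly and obtain the increasing case by passing to $f^{-1}$ (or by the mirrored argument). Your explicit induction along the chain $a_m, a_{m+N}, \ldots, a_n$ via the isosceles property supplies the justification for $d(a_0, a_{lN}) = r_N$, which the paper's proof (after translating to $m = 0$) asserts without argument.
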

\begin{proof}
    It is enough to consider the case $m = 0$ since in general we would obtain 
    \[
        d(a_m, a_n) = (D_f)^m(d(a_0, a_{n - m})) = (D_f)^m(\max\set{\rho(a_0), \rho(a_{n - m})}) = \max\set{\rho(a_m), \rho(a_n)}.
    \]
    So suppose $m = 0$ and that $n$ is divisible by $N$ and $l$ is such that $n=lN$.
    If $D_f(r_N) < r_N$, we get $d(a_{-N},a_0)=(D_f)^{-N}(r_N)<r_N$. Therefore we obtain $\rho(a_0) = r_N$ and $\rho(a_{lN}) = (D_f)^{l N}(r_N) = r_{(l + 1) N} < r_N$, and hence 
    \[
        d(a_0, a_n) = d(a_0, a_{l N}) = r_N = \max\set{r_N, r_{(l + 1) N}} = \max\set{\rho(a_0), \rho(a_{l N})}.
    \]
    If $D_f(r_N) > r_N$, we consider $f' = f^{-1}$ and obtain
    \[
        d(a_m, a_n) = d(a'_{-n}, a'_{-m}) = \max\set{\rho'(a'_{-n}), \rho'(a'_{-m})} = \max\set{\rho(a_m), \rho(a_n)}.
        \qedhere
    \]
\end{proof}

The previous lemmas already give a complete description what a single orbit has to look like up to isomorphism.
To give an explicit enumeration of the isomorphism types we consider the following concrete models of orbits.

\medskip

We start with an \emph{(abstract) constructing sequence}, i.e. a finite sequence $(n_i)_{0 \leq i \leq k}$ for $k \in \N$ or an infinite sequence $(n_i)_{0 \leq i < \infty}$ such that $n_0 = 1$ and $n_{i + 1} = l_i n_i$ for some $l_i \in \N \setminus \set{0, 1}$ for every $i$.

Let $\Z$ denote the infinite cyclic group of integers, for every $n \in \N \setminus \set{0, 1}$ let $\Z_n$ denote the finite cyclic group of order $n$, and let $h_n\maps \Z \to \Z_n$ denote the canonical homomorphism mapping $1 \mapsto 1$.
For every constructing sequence $(n_i)$ we let $\Z_{(n_i)}$ be the cyclic subgroup of $\prod_i \Z_{n_i}$ generated by the constant one sequence $\bar{1}$.
We have the canonical surjective homomorphism $h_{(n_i)}\maps \Z \to \Z_{(n_i)}$ mapping every $n \in \Z$ to the constant sequence $\bar{n}$, i.e. $h_{(n_i)}$ is the diagonal product of the homomorphisms $h_{n_i}$.
If $(n_i)_{0 \leq i \leq k}$ is finite, $h_{(n_i)}$ induces an isomorphism $\Z_{n_k} \to \Z_{(n_i)}$, while if $(n_i)$ is infinite, $h_{(n_i)}$ is already an isomorphism.
For a finite constructing sequence $(n_i)$ we also let $\Z_{(n_i), \infty}$ be the cyclic subgroup of $(\prod_i \Z_{n_i}) \times \Z$ generated by $\bar{1}$ and we let $h_{(n_i), \infty}\maps \Z \to \Z_{(n_i), \infty}$ denote the corresponding canonical isomorphism.

In all the cases considered, i.e. $\Z_{(n_i)}$ and $\Z_{(n_i), \infty}$ for $(n_i)$ finite, and $\Z_{(n_i)}$ for $(n_i)$ infinite, we let $(a_n)_{n \in \Z}$ be the canonical enumeration of $\Z_{(n_i)}$ (or $\Z_{(n_i), \infty}$), i.e. $a_n = h_{(n_i)}(n) = \bar{n}$ (or $a_n = h_{(n_i),\infty}(n)$).
We also let $s\maps \Z_{(n_i)} \to \Z_{(n_i)}$ (or $s\maps\Z_{(n_i), \infty} \to \Z_{(n_i), \infty}$) denote the successor operation $a_n \mapsto a_{n + 1}$, $n \in \Z$, which is an automorphism of the set $\Z_{(n_i)}$ (or $\Z_{(n_i), \infty}$) with a single orbit.
We will turn the sets $\Z_{(n_i)}$ and $\Z_{(n_i), \infty}$ into ultrametric spaces such that $s$ becomes a dc-automorphism.

Let us fix a linearly ordered set of distances
\[
    E = \set{u_1 > u_2 > \cdots > 0} = \set{u_i}_{i \in \N}  \cup \set{0}
\]
and its finite parts $E_k = \set{u_i}_{1 \leq i \leq k} \cup \set{0} \subs E$ for $k \in \N$.
Let $(n_i)_{i \in I}$ be a constructing sequence, so $I = \set{0, \ldots, k}$ or $I = \N$, and for every $j \in I$ let $\pi_j\maps \Z_{(n_i)} \to \Z_{n_j}$ denote the projection.
We endow $\Z_{(n_i)}$ with the following ultrametric:
\begin{align*}
    &d(a_m, a_n) = u_j & \text{where } j &= \min\set{i \in I: \pi_i(a_m) \neq \pi_i(a_n)} \\
        && &= \min\set{i \in I: n_i \text{ does not divide } n - m} \geq 1, & \text{for $a_m \neq a_n$}.
\end{align*}
That turns $\Z_{(n_i)}$ into a two-sorted ultrametric space with the distance set $E_k$ (in the finite case) or $E$ (in the infinite case), and $s\maps a_n \mapsto a_{n + 1}$ to  an isometry.

For a finite constructing sequence $(n_i)_{0 \leq i \leq k}$ we define two ultrametric structures on $\Z_{(n_i),\infty}$.
The distance $d$ is defined by the same formula as above, but we also consider the last projection $\pi_{k + 1} \maps \Z_{(n_i), \infty} \to \Z$, so the set of distances is $E_{k + 1}$.
Again, $s$ becomes an isometry of $\Z_{(n_i), \infty}$, fixing all distances.

To define the second ultrametric we first consider additional distances $\set{v_n}_{n \in \Z}$ such that we obtain
\[
    E^* = \set{u_1 > u_2 > \cdots > v_{-1} > v_0 > v_1 > \cdots > 0} = E \cup \set{v_n}_{n \in \Z}.
\]
Let $\phi \in \aut(E^*)$ be such that $\phi(0) = 0$, $\phi(u_i) = u_i$ for every $i$, and $\phi(v_n) = v_{n + 1}$ for every $n$.
For every $k \in \N$ let $E^*_k$ be the part $\set{u_i}_{1 \leq i \leq k} \cup \set{v_n}_{n \in \Z} \cup \set{0} \subs E^*$ and let $\phi_k = \phi\restr{E^*_k} \in \aut(E^*_k)$.
The second ultrametric on $\Z_{(n_i), \infty}$ is defined by 
\[
d^*(a_m, a_n) = \begin{cases}
    u_j & \text{if $j := \min\set{i \leq k + 1: \pi_i(a_m) \neq \pi_i(a_n)} < k + 1$,} \\
    \max\set{v_m, v_n} & \text{if $j = k + 1$, or equivalently $n_k$ divides $n - m$.}
\end{cases}
\]
The distance set is $E^*_k$, and $s$ becomes dc-automorphism with the distance part $D_s = \phi_k$.

\begin{observation}
    To show the ultrametric triangle inequality for $d^*$ we pick distinct points $a_l, a_m, a_n$ with $l, m, n \in \Z$.
    We know that $d$ is an ultrametric on $\Z_{(n_i), \infty}$.
    Note that $d^*$ modifies $d$ in a way that when $d(a_m, a_n)$ takes the smallest positive value $u_{k + 1}$, $d^*(a_m, a_n)$ takes  one of the values $v_l$ instead,  which is even smaller.
    So the ultrametric triangle inequality for $d^*$ is satisfied when zero or one of the distances $d(a_l, a_m), d(a_m, a_n), d(a_l, a_n)$ is equal to $u_{k + 1}$.
    It is impossible for exactly two of the distances to be $u_{k + 1}$, and in the case that all three distances are $u_{k + 1}$ it is easy to see that the assignment $\seq{a_m, a_n} \mapsto \max\set{v_m, v_n}$ itself satisfies ultrametric triangle inequality.
\end{observation}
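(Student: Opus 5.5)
The plan is to reduce everything to the fact that $d$ is an ultrametric and then check the modification case by case. First I confirm that $d$ is indeed an ultrametric on $\Z_{(n_i),\infty}$. It is the usual ``first differing coordinate'' metric on a subset of the product $(\prod_{i \leq k}\Z_{n_i}) \times \Z$, with the values $u_1 > u_2 > \cdots > u_{k+1}$ decreasing. For three points, if $a_l$ and $a_m$ agree on coordinates $< j$ and $a_m$ and $a_n$ agree on coordinates $< j'$, then $a_l$ and $a_n$ agree on coordinates $< \min\set{j, j'}$. This gives $d(a_l,a_n) \leq \max\set{d(a_l,a_m), d(a_m,a_n)}$.

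I also check that $d^*$ is symmetric and vanishes exactly on the diagonal, since every value $v_n$ is positive. It then remains to verify the triangle inequality for distinct $a_l, a_m, a_n$.

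The key step is the following identification. For $a_m \neq a_n$ we have $d(a_m,a_n) = u_{k+1}$ iff the first $k+1$ coordinates agree. Since $n_0 \mid n_1 \mid \cdots \mid n_k$, this happens iff $n_k$ divides $n-m$. Divisibility of the difference by $n_k$ is an equivalence relation on indices. Hence, among the three pairs of points, the number of pairs with $d$-distance $u_{k+1}$ is $0$, $1$ or $3$; exactly two is impossible by transitivity. I then split into these cases.

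\emph{No such pair.} Then $d^* = d$ on the triangle, and the inequality is inherited from $d$.

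\emph{Exactly one such pair.} Say it is $\set{a_m, a_n}$. The other two $d$-distances lie in $\set{u_1, \ldots, u_k}$, hence are strictly larger than $u_{k+1}$. In an ultrametric triangle the two largest sides are equal, so these two sides are equal, say both $u_j$. Replacing the smallest side by $\max\set{v_m,v_n} < u_{k+1} < u_j$ keeps the triangle isosceles with its two largest sides equal. So every side is at most the maximum of the other two.

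\emph{All three pairs.} Then all three $d^*$-values are of the form $\max\set{v_\cdot, v_\cdot}$. The inequality $\max\set{v_l,v_n} \leq \max\set{\max\set{v_l,v_m}, \max\set{v_m,v_n}}$ holds trivially, since the right-hand side is $\max\set{v_l,v_m,v_n}$.

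I expect no real obstacle. The only point needing care is the ``exactly two is impossible'' claim. It rests on the equivalence-relation observation, namely that $d(a_m,a_n) = u_{k+1}$ is equivalent to $n_k \mid n-m$, which uses that each $n_i$ divides $n_k$.
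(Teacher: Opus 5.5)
Your proposal is correct and takes the same approach as the paper: split on how many of the three $d$-distances equal $u_{k+1}$, rule out exactly two, and handle the all-three case by the trivial ultrametric inequality for $\max\set{v_m,v_n}$. The details you add fill in what the paper leaves implicit: why $d$ is an ultrametric, the equivalence $d(a_m,a_n)=u_{k+1}$ iff $n_k \mid n-m$ behind ``exactly two is impossible'', and the isosceles argument in the one-pair case.
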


We are ready to define particular total automorphisms of two-sorted ultrametric spaces with a single orbit that represent all possible isomorphism types, as we shall prove.
\begin{itemize}
    \item $C_{(n_i)} = \seq{\seq{\Z_{(n_i)}, d, E_k}, \seq{s, \id{}}}$ for a finite constructing sequence $(n_i)_{0 \leq i \leq k}$ is a closed orbit of length $n_k > 0$.
    This includes the case of a fixed-point orbit ($k = 0$).
    
    \item $H_{(n_i)} = \seq{\seq{\Z_{(n_i)}, d, E}, \seq{s, \id{}}}$ for an infinite constructing sequence $(n_i)_{0 \leq i < \infty}$ is a horizontal orbit with infinitely many distances.

    \item $H_{(n_i), \infty} = \seq{\seq{\Z_{(n_i), \infty}, d, E_{k + 1}}, \seq{s, \id{}}}$ for a finite constructing sequence $(n_i)_{0 \leq i \leq k}$ is a horizontal orbit with finitely many distances.

    \item $M^-_{(n_i), \infty} = \seq{\seq{\Z_{(n_i), \infty}, d^*, E^*_k}, \seq{s, \phi_k}}$ 
    and $M^+_{(n_i), \infty} = \seq{\seq{\Z^*_{(n_i), \infty}, d^*, E^*_k}, \seq{s^{-1}, \phi_k^{-1}}}$ 
    for a finite constructing sequence $(n_i)_{0 \leq i \leq k}$ are a decreasing and an increasing monotone orbit.
\end{itemize}

\begin{tw} \label{thm:single_orbit}
    Let $\seq{X, f} \in \Ult^*$ be a total dc-automorphism with a single orbit of the point sort and all distances used.
    Then $\seq{X, f}$ is isomorphic to exactly to one of the following:
    \begin{enumerate}
        \item $C_{(n_i)} $ for a finite constructing sequence $(n_i)_{0 \leq i \leq k}$,
        \item $H_{(n_i)} $ for an infinite constructing sequence $(n_i)_{0 \leq i < \infty}$,
        \item $H_{(n_i), \infty} $ for a finite constructing sequence $(n_i)_{0 \leq i \leq k}$,
        \item $M^-_{(n_i),\infty} $ or $M^+_{(n_i),\infty}$ for a finite constructing sequence $(n_i)_{0 \leq i \leq k}$.
    \end{enumerate}
\end{tw}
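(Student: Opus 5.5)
Given $\seq{X, f}$ with a single point orbit $\set{a_n}_{n\in\Z}$, $a_n = f^n(a_0)$, and all distances used, I will take the invariants already introduced: the length $L$, the monotone period $N$, and the constructing sequence $(n_i)$, which is a constructing sequence in the abstract sense by Lemma~\ref{thm:constructing_sequence}. Exactly one of the following holds: $L<\infty$; $N<\infty$; or $L=N=\infty$ with $(n_i)$ either finite or infinite. This determines which model we aim at. The candidate isomorphism is always the same on points, namely $a_n \mapsto s^n(a_0)$ in the model (for $M^+$, $a_n \mapsto s^{-n}$ of the base point). On distances I send the fixed distances $r_{n_i}$ to $u_{i+1}$ in decreasing order, which is consistent by Lemma~\ref{thm:decreasing_distances}. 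In the monotone case I additionally send the monotone distances $v_n$, enumerated decreasingly as in the text, to the model's $v_n$. Since all distances are used, $D_X$ consists exactly of $0$, the fixed distances $r_{n_i}$, and (in the monotone case) the values $\rho(a_n)$. The distance map is then an order isomorphism onto the model's distance set $E_k$, $E$, $E_{k+1}$ or $E^*_k$.

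Next I would check that the point map is well defined and bijective. In the closed case $a_n=a_m$ iff $n_k=L$ divides $n-m$. In the other cases the $a_n$ are pairwise distinct: a repetition would make the orbit closed. This matches $\Z_{(n_i)}\cong\Z_{n_k}$ in the closed case, and $\Z$ otherwise.

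The main step is checking that distances are carried correctly, i.e.\ $d(a_m,a_n)$ corresponds to the model's formula. When $N=\infty$, or when $N$ does not divide $n-m$, Lemma~\ref{thm:horizontal_distances} gives $d(a_m,a_n)=r_{n_i}$ with $n_i$ the largest constructing element dividing $n-m$. In the model the index $j$ is the least $i$ with $n_i\nmid n-m$, which is $i+1$, so the value is $u_{i+1}$, as required. For the closed case, note $n_k\nmid n-m$ when $a_m\ne a_n$. For the horizontal finite case the last projection onto $\Z$ gives $j=k+1$ exactly when $n_k\mid n-m$, yielding $u_{k+1}\leftrightarrow r_{n_k}$. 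In the monotone case with $N\mid n-m$, Lemma~\ref{thm:monotone_distances} gives $\max\set{\rho(a_m),\rho(a_n)}$, matching $\max\set{v_m,v_n}$ under $\rho(a_n)\mapsto v_n$ (or $v_{-n}$ for increasing orbits, handled by passing to $f^{-1}$ via Observation~\ref{obs:invariant}). Compatibility with $f$ and $s$ is then immediate: on points by construction, and on distances because $D_f$ fixes the $u$'s and shifts $\rho(a_n)\mapsto\rho(a_{n+1})$.

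Finally, uniqueness follows because $L$, $N$, the direction of monotonicity, and the constructing sequence are isomorphism invariants by Observation~\ref{obs:invariant}, and these read off each model differently. I expect the main obstacle to be bookkeeping in the monotone case: verifying that the $\rho$-values are pairwise distinct, are exactly the non-fixed distances, and lie below all $u$'s, so that the distance map onto $E^*_k$ is order preserving. Here I would use $r_N<r_{n_{k-1}}$ together with the strict monotonicity of $D_f$ on the orbit of $r_N$.
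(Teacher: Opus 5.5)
Your proposal is correct and follows essentially the same route as the paper: transport the structure along the enumeration $a_n = f^n(a_0)$, use Lemma~\ref{thm:horizontal_distances} for pairs with $N \nmid n-m$ and Lemma~\ref{thm:monotone_distances} for the remaining pairs in the monotone case, and reduce the increasing case to the decreasing one by passing to $f^{-1}$. Your extra bookkeeping fills in details the paper leaves implicit: the order isomorphism $r_{n_i}\mapsto u_{i+1}$ on the \emph{positive} fixed distances; the fact that the $\rho$-values form a strictly monotone $D_f$-orbit lying below every positive fixed distance, because $D_f$ preserves order and fixes $r_{n_{k-1}}$; and uniqueness via the isomorphism invariants $L$, $N$, the direction of monotonicity, and $(n_i)$.
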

\begin{proof}
    We pick any $a \in X$ and define $(a_n)$, $(r_n)$, $L$, $N$, and the constructing sequence $(n_i)$ as above.
    If $N = \infty$, then Lemma~\ref{thm:horizontal_distances} shows that $d_X$ agrees with the ultrametric $d$ of $C_{(n_i)}$, $H_{(n_i)}$, and $H_{(n_i), \infty}$.
    Note that $u_i = r_{n_{i + 1}}$ and the smallest $i$ such that $n_{i + 1}$ does not divide $n - m$ is the largest $i$ such that $n_i$ divides $n - m$.
    Hence, the enumeration $(a_n)$ gives the isomorphism between $\seq{X, f}$ and $C_{(n_i)}$, $H_{(n_i)}$, or $H_{(n_i), \infty}$, depending on type of the orbit.

    Similarly, if $N < \infty$, i.e. if the orbit is monotone, we also define $\rho\maps X \to D_X$ as above, and by combining Lemma~\ref{thm:horizontal_distances} with Lemma~\ref{thm:monotone_distances} we get that $d_X$ agrees with the ultrametric $d^*$ of $M^-_{(n_i), \infty}$ and $M^+_{(n_i), \infty}$.
    Note that $N = n_k$ divides $n - m$ if and only if $\pi_i(a_m) \neq \pi_i(a_n)$ just for $i = k + 1$.
    For a decreasing monotone orbit we have $\rho(a_n) = v_n$ and $D_f = \phi$, and so $\seq{X, f} \cong M^-_{(n_i), \infty}$.
    For an increasing monotone orbit we have $\seq{X, f^{-1}} \cong M^-_{(n_i), \infty}$, and so $\seq{X, f} \cong M^+_{(n_i), \infty}$.
\end{proof}

In the following, $\seq{X, f}\restr{L}$ for $L \in \N$ and for $\seq{X, f}$ being any of the orbit type spaces $C_{(n_i)}$, $H_{(n_i)}$, $H_{(n_i),\infty}$, $M^-_{(n_i),\infty}$, and $M^+_{(n_i),\infty}$, denotes the restriction to the subset $\set{a_n: 0 \leq n < L}$ of the first $L$ points (and to distances used by those points).
We define the \emph{partial horizontal} and \emph{partial monotone} orbits:
\begin{itemize}
    \item $H_{(n_i), L} = H_{(n_i), \infty}\restr{L}$ for a finite constructing sequence $(n_i)_{0 \leq i \leq k}$ and $L \geq n_k + 1$,
    \item $M^-_{(n_i), L} = M^-_{(n_i), \infty}\restr{L}$ and $M^+_{(n_i), L} = M^+_{(n_i), \infty}\restr{L}$ for a finite constructing sequence $(n_i)_{0 \leq i \leq k}$ and $L \geq n_k + 2$.
\end{itemize}

\begin{observation} \label{obs:orbit_restrictions_isomorphic}
    Note that $H_{(n_i), L}$ is (uniquely) isomorphic both to $H_{(n'_i), \infty}\restr{L}$ and to $H_{(n'_i)}\restr{L}$ for every finite constructing sequence $(n'_i)_{0\leq i \leq k'}$ with $k' > k$ and every infinite constructing sequence $(n'_i)_{0\leq i < \infty}$, respectively, such that $n'_i = n_i$ for $i \leq k$, and for $L \leq n'_{k + 1}$.
    This is because for $0 \leq m < n < n'_{k + 1}$ the formulas for $d(a_m, a_n) \in \set{u_1, \ldots, u_{k + 1}}$ agree in $H_{(n_i), \infty}$, $H_{(n'_i)_{i\leq k'}, \infty}$, and $H_{(n'_i)_{i<\infty}}$, whereas $d(a_0, a_{n'_{k + 1}})$ is $u_{k + 1}$ in $H_{(n_i), \infty}$ but $u_{k + 2}$ in $H_{(n'_i)_{i\leq k'}, \infty}$ and $H_{(n'_i)_{i < \infty}}$.

    Also note that the only difference between $C_{(n_i)_{i\leq k}}$ and $H_{(n_i)_{i < k}, n_k}$ with $k \geq 1$ is whether the partial automorphism maps $a_{n_k - 1} \mapsto a_0$ or is undefined at $a_{n_k - 1}$.
    The distance formulas as well as the distance parts of the automorphisms agree.

    Finally note that $M^-_{(n_i),\infty}\restr{n_k + 1}$ and $M^+_{(n_i),\infty}\restr{n_k + 1}$ are uniquely isomorphic, with the smallest positive distance $v_0$ at which the automorphism is undefined, while $H_{(n_i), n_k + 1}$ is uniquely isomorphic to the above with $v_0$ replaced by $u_{k + 1}$ that is fixed by the automorphism.
    
\end{observation}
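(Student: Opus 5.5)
Each claim is a direct check of the distance formulas on the finitely many pairs of points that survive the restriction. Uniqueness of the isomorphisms will come from the rigidity of a finite partial orbit.

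\emph{Uniqueness.} Every structure in the statement is a restriction $\seq{X, f}\restr{L'}$, so its partial automorphism on points is the path $a_0 \mapsto a_1 \mapsto \cdots \mapsto a_{L'-1}$ (in the case of $C_{(n_i)}$, closed up by $a_{n_k-1} \mapsto a_0$). In the path case, $a_0$ is the unique point outside the range. Hence any isomorphism of partial automorphisms must send $a_0 \mapsto a_0$ and then $a_j \mapsto a_j$ by induction along $p$. In each comparison below the map $a_j \mapsto a_j$ is therefore the only possible point map. The restrictions keep only the distances used by their points, so they are precise. Consequently the distance part of any dc-isomorphism is determined by the point map, via $d(a_m,a_n) \mapsto d(a_m,a_n)$. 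It then remains only to check that the formulas are compatible, i.e.\ that this assignment is a well-defined order isomorphism commuting with the distance parts of the automorphisms.

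\emph{First claim.} For $0 \le m < n < L \le n'_{k+1}$ put $t = n-m$, so $0 < t < n'_{k+1}$. In $H_{(n_i),\infty}$ the distance is $u_j$, where $j$ is the least $i \le k$ with $n_i \nmid t$, and $j = k+1$ if there is none (the last coordinate in $\Z$ is never zero because $t \neq 0$). In $H_{(n'_i)}$ or $H_{(n'_i)_{i\le k'},\infty}$ the same index is computed, since $n'_i = n_i$ for $i \le k$ and $n'_{k+1} \nmid t$ because $0 < t < n'_{k+1}$; any later index is never reached. So the distance formulas coincide, and all distance parts are identities. I would also note why the bound $L \le n'_{k+1}$ is sharp: the pair $a_0, a_{n'_{k+1}}$ gets $u_{k+1}$ in the first space but $u_{k+2}$ in the others.

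\emph{Second claim.} Both structures have the points $a_0, \ldots, a_{n_k-1}$, and every difference satisfies $0 < t < n_k$. In $C_{(n_i)_{i\le k}}$ the index $\min\{i \le k : n_i \nmid t\}$ therefore exists and is at most $k$. In $H_{(n_i)_{i<k},n_k}$ the same minimum is taken over $i < k$, with the value $k$ if there is none. These two agree, because $n_k \nmid t$. So the only difference between the structures is the extra arrow $a_{n_k-1} \mapsto a_0$.

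\emph{Third claim (the only delicate step).} Among the points $a_0, \ldots, a_{n_k}$, the only pair whose difference is divisible by $n_k$ is $\{a_0, a_{n_k}\}$. All other pairs receive the same fixed distances $u_j$ as in the previous claims, in all three structures. The remaining pair gets $\max\{v_0, v_{n_k}\} = v_0$ in $M^-$. In $M^+$ it gets a single monotone distance $v_c$; I expect the main bookkeeping here to be the orientation, because the enumeration there runs along $s^{-1}$ and one must identify $c$ correctly. In $H_{(n_i),n_k+1}$ it gets $u_{k+1}$. In each case this distance is strictly below all $u_j$ with $j \le k$ and above $0$, so the point bijection induces an order isomorphism on distances. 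The domain of the partial automorphism is $\{a_0, \ldots, a_{n_k-1}\}$, which does not realize this smallest positive distance. Hence the distance part is undefined there in the monotone cases, where its image would be $v_1$ or $v_{-1}$, a distance not present. In $H_{(n_i),n_k+1}$, by contrast, it is the restriction of the identity and so fixes $u_{k+1}$. Renaming $v_0 \leftrightarrow v_c \leftrightarrow u_{k+1}$ then gives the stated unique isomorphisms.
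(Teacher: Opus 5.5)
Your proof is correct and follows the paper's own justification. Like the paper, you compare the distance formulas directly on the surviving pairs, where the only pair with difference divisible by $n_k$ is $\{a_0,a_{n_k}\}$. Your rigidity argument for uniqueness (the start of the path is the unique point outside the range, and the restrictions are precise) makes explicit what the paper leaves implicit.

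On your orientation worry: $\restr{L}$ uses the canonical enumeration $a_n = h_{(n_i),\infty}(n)$ in both $M^-$ and $M^+$. So $c=0$, and the distance part is undefined at $v_0$ because $\phi_k^{\pm 1}(v_0)=v_{\pm 1}$ is not among the retained distances. The fact that the point-domain does not realize $v_0$ is not by itself the reason, since the same holds for $u_{k+1}$ in $H_{(n_i),n_k+1}$, where the distance part is defined.
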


Motivated by the previous observation, we define the \emph{undecided} partial orbits:
\begin{itemize}
    \item 
    $P_{(n_i), L}=
    \begin{cases}
    M^-_{(n_i), \infty}\restr{L} & \text{ if } L = n_k + 1 \\  H_{(n_i), L} & \text{ if } L > n_k + 1,
    \end{cases}$
    
    where $(n_i)_{0 \leq i \leq k}$
    is a finite constructing sequence and $L \geq n_k + 1$, and

    \item $P_{(), 1} = M^-_{(1), \infty}\restr{1}$ as a special notation for an empty automorphism of a one-point space.
\end{itemize}
An orbit type $P_{(n_i), L}$ is called \emph{full} if $L$ is a multiple of $n_k$ (including $P_{(), 1})$, and is called \emph{fresh} if $L = n_k + 1$ (excluding $P_{(), 1}$; only $P_{(1), 2}$ is simultaneously full and fresh).
The point is that when we consider forward one-point orbit extensions, every $P_{(n_0, \ldots, n_k), L}$ can ``continue'' to $P_{(n_0, \ldots, n_k), L + 1}$, but a full one can also be ``closed'' to $C_{(n_0, \ldots, n_k, L)}$ or can ``ascend'' to $P_{(n_0, \ldots, n_k, L), L + 1}$,  which is fresh.
(Note that the closing is actually not a one-point extension – the partial automorphism is extended without adding a new point.)
A fresh orbit type $P_{(n_0, \ldots, n_k), n_k + 1}$ can besides continuing also be extended to $M^-_{(n_0, \ldots, n_k), n_k + 2}$ or $M^+_{(n_0, \ldots, n_k), n_k + 2}$ by making the fresh distance $r_{n_k}$ monotone (putting $d(a_1, a_{n_k + 1}) \neq d(a_0, a_{n_k})$). 
A partial monotone orbit $M^{-/+}_{(n_i), L}$ can only continue  to $M^{-/+}_{(n_i), \infty}$, while an undecided partial orbit $P_{(n_i), L}$, if it eventually does not become closed or monotone, ends up being a horizontal orbit with finitely or infinitely many distances, depending on whether we ascend finitely or infinitely many times.

\begin{wn} \label{thm:partial_single_orbit}
    Let $\seq{A, p} \in \Ult^*_\fin$ be a partial dc-automorphism with a single orbit of the point sort and all distances used.
    Then $\seq{A, p}$ is isomorphic to exactly one of the following:
    \begin{enumerate}
        \item $C_{(n_i)}$ for a finite constructing sequence $(n_i)_{0 \leq i \leq k}$,
        \item $P_{(n_i), L}$ for a finite constructing sequence $(n_i)_{0 \leq i \leq k}$ and $L \geq n_k + 1$, or $P_{(), 1}$,
        \item $H_{(n_i), L}$ for a finite constructing sequence $(n_i)_{0 \leq i \leq k}$ and $L = n_k + 1$,
        \item $M^-_{(n_i), L}$ or $M^+_{(n_i), L}$ for a finite constructing sequence $(n_i)_{0 \leq i \leq k}$ and $L \geq n_k + 2$.
    \end{enumerate}
\end{wn}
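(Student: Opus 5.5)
The natural approach is to reduce the finite partial case to the analysis of total orbits carried out in Theorem~\ref{thm:single_orbit} and its lemmas. Since $A$ is finite and has a single orbit of points, either $p$ is total on points (and then the orbit is closed), or the orbit is a finite path $a_0, a_1 = p(a_0), \ldots, a_{L-1}$ with $p$ undefined at $a_{L-1}$. For the distance part, note that $\dom(D_p)$ is determined by the points: since all distances of $A$ are used and $\dom(p)$ is a substructure, we may treat $p$ as essentially precise on the distances attained between points of $\dom(p)$.

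In the closed case, $p$ is a total finite dc-automorphism, and finite linear orders have no nontrivial automorphisms, so $D_p = \id{}$. Then Theorem~\ref{thm:single_orbit} applies directly, giving $C_{(n_i)}$.

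In the path case, I would rerun the arguments of Lemmas~\ref{thm:decreasing_distances}, \ref{thm:constructing_sequence}, \ref{thm:horizontal_distances} and~\ref{thm:monotone_distances} on the finite path, with index ranges truncated to $\{0,\ldots,L-1\}$. Each of those proofs uses only triangles $a_0, a_m, a_n$ with indices in the available range, together with the identity $d(a_k,a_{k+n}) = d(a_0,a_n)$ whenever $D_p$ fixes $r_n$ and $a_0,\ldots,a_{k+n}$ are defined. So, with $r_n = d(a_0,a_n)$ for $1\le n\le L-1$, I define $N$ as the least $n$ with $a_{n+1}$ defined and $D_p(r_n) \neq r_n$, together with the constructing sequence $(n_i)_{0\le i\le k}$ of indices $\le \min(N, L-1)$ at which new distances appear. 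The truncated lemmas then determine $d$ on the whole path.
\begin{itemize}
\item If $N$ exists, then $L \ge n_k+2$ and the path is isomorphic to $M^\mp_{(n_i),L}$ according to the direction of monotonicity.
\item If $N$ does not exist, then all distances realized within $\dom$ are fixed, and the path is isomorphic to $H_{(n_i),L}$ with $L \ge n_k+1$, or to $M^-_{(n_i),\infty}\restr{n_k+1}$ when the last fresh distance $r_{n_k}$ with $n_k = L-1$ lies outside $\dom(D_p)$.
\end{itemize}
The isomorphism in each case is just the enumeration $a_n \mapsto a_n$, as in the proof of Theorem~\ref{thm:single_orbit}.

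Matching these cases with the list in the statement is where Observation~\ref{obs:orbit_restrictions_isomorphic} comes in. A horizontal path with $L > n_k+1$ is by definition $P_{(n_i),L}$. When $L = n_k+1$, the distinction is whether the smallest distance $r_{n_k}$ is fixed by $D_p$, meaning $r_{n_k}\in\dom(D_p)$ because it is attained between points of $\dom(p)$ (which gives item (3), $H_{(n_i),n_k+1}$), or lies outside $\dom(D_p)$ (which gives $P_{(n_i),n_k+1}$). The single point is $P_{(),1}$.

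Uniqueness will follow because the invariants can be read off from $\seq{A,p}$: closedness, $L$, the constructing sequence, and whether the smallest distance is in $\dom(D_p)$ or monotone, with the sign of monotonicity. These invariants distinguish the listed models, and within each model the enumeration is forced by the single orbit.

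The main obstacle will be the bookkeeping at the boundary indices. The truncated lemmas require care about which $a_{k+n}$ exist and when $D_p(r_n)$ is defined, since $r_n$ must be realized inside $\dom(p)$. That boundary is exactly what separates $H_{(n_i),n_k+1}$ from $P_{(n_i),n_k+1}$, and what forces $L \ge n_k+2$ for the monotone case.
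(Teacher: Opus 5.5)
Your route differs from the paper's. The paper does not rerun the lemmas on a truncated path. It extends $\seq{A,p}$ to a total dc-automorphism $\seq{X,f}$ (available by homogeneity of the Fraïssé limit) and applies Theorem~\ref{thm:single_orbit} to that orbit. It then identifies $\seq{A,p}$ with the restriction $\seq{X,f}\restr{L}$, possibly with $D_p$ left undefined at one distance, and reads off the type via Observation~\ref{obs:orbit_restrictions_isomorphic}. Your direct route is workable for the point sort. However, your treatment of the distance sort of $\dom(p)$ is wrong, exactly at the step that separates items (2) and (3).

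You assert that $\dom(D_p)$ is determined by the points, i.e.\ may be taken to be the distances realized between points of $\dom(p)$. You later say that for $H_{(n_i),n_k+1}$ one has $r_{n_k}\in\dom(D_p)$ ``because it is attained between points of $\dom(p)$''. Both claims are false. Suppose $L=n_k+1$ and all realized distances in $\dom(p)$ are fixed. Then every distance among $a_0,\dots,a_{L-2}$ is some $r_m$ with $m\le L-2$. By (your truncated) Lemma~\ref{thm:decreasing_distances}, each such $r_m$ is strictly larger than $r_{n_k}=d(a_0,a_{L-1})$. So $r_{n_k}$ is never realized in $\dom(p)$, and under your convention item (3) would be empty. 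Already $H_{(1),2}$ is a counterexample: there $\dom(p)$ has the single point $a_0$ but distance sort $\set{0,u_1}$. The domain of a partial dc-automorphism is a two-sorted substructure whose distance sort may contain unrealized distances. That freedom is precisely what distinguishes $H_{(n_i),n_k+1}$ from $P_{(n_i),n_k+1}$.

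To repair this you must pin down $\dom(D_p)$ in each case.
\begin{enumerate}
\item One has $d[\dom(p)\times\dom(p)]\subseteq\dom(D_p)\subseteq D_A$.
\item Your truncated lemmas give the difference $D_A\setminus d[\dom(p)\times\dom(p)]$. It is empty for a horizontal path with $L>n_k+1$. It is $\set{r_{n_k}}$ when $L=n_k+1$. In the monotone case it is $\set{w}$ with $w=d(a_{L-1-N},a_{L-1})$: the least positive element of $D_A$ in the decreasing case, and the largest monotone distance, lying just below $u_k$, in the increasing case.
\item Order preservation finishes the horizontal case $L=n_k+1$. If $r_{n_k}\in\dom(D_p)$, then $D_p(r_{n_k})$ is a positive element of $D_A$ below $D_p(u_k)=u_k$ (or merely positive if $k=0$), hence equals $r_{n_k}$. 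This is what justifies your identification of ``fixed'' with ``in $\dom(D_p)$''.
\item In the monotone case, put $w'=d(a_{L-2-N},a_{L-2})$, so that $D_p(w')=w$. If $w\in\dom(D_p)$, the decreasing case gives $0<D_p(w)<D_p(w')=w$, contradicting minimality of $w$. The increasing case gives $w<D_p(w)<D_p(u_k)=u_k$ (or $D_p(w)>w=\max D_A$ if $k=0$), and no element of $D_A$ lies there.
\end{enumerate}
Without the last step, your isomorphism with $M^{\mp}_{(n_i),L}$ is not established on the distance sort.

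With these additions your approach becomes a correct, self-contained alternative that avoids the Fraïssé limit. The cost is redoing the boundary bookkeeping that the paper inherits from the total classification.
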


\begin{proof}
    
    If $\seq{A, p}$ is a closed orbit, then by Theorem~\ref{thm:single_orbit} it isomorphic to exactly one of the $C_{(n_i)}$'s.
    Otherwise, $A$ can be uniquely enumerated as $a_0 \mapsto a_1 \mapsto \cdots \mapsto a_{L - 1}$ for some $L \geq 1$, and we have have $\dom(D_p) = D_A$ or $\dom(D_p) = D_A \setminus \set{d(a_0, a_{L - 1})}$.
    Since every finite partial automorphism can be extended to a total one and since total single orbits were characterized by Theorem~\ref{thm:single_orbit}, $\seq{A, p}$ is uniquely isomorphic to the restriction $\seq{X, f}\restr{L}$ or to its modification  obtained by undefining  $D_p(d(a_0, a_{L - 1}))$, where $\seq{X, f}$ is one of the single orbit types.
    It follows from Observation~\ref{obs:orbit_restrictions_isomorphic} that $\seq{A, p}$ is isomorphic to one of $P_{(n_i), L}$, $H_{(n_i), L}$, $M^{-/+}_{(n_i), L}$: 
    for $L < n_k$ we have $C_{(n_i)_{i\leq k}}\restr{L} \cong H_{(n_i)_{i < k}, n_k}\restr{L} = H_{(n_i)_{i < k}, \infty}\restr{L} \cong H_{(n_i)_{i \leq k'}, L}$ where $k' < k$ is such that $n_{k'} < L \leq n_{k' + 1}$, and similarly in other cases.

    It remains to show that all the listed types of finite partial orbits are different.
    As already observed, $C_{(n_i)}$'s are all different and different from the others.
    The other are differentiated by the cardinality of the point sort $L$, the (partial) constructing sequence $(n_i)$, and by their signature behavior:
    $M^{-/+}_{(n_i), L}$ has a monotone decreasing/increasing distance since $L \geq n_k + 2$, while $P_{(n_i), L}$ and $H_{(n_i), L}$ have all distances fixed – with the exception of the smallest positive distance of $P_{(n_i), L}$, which is not in $\dom(D_p)$.
\end{proof}

\color{black}

\medskip
By the previous observations,  an isolated orbit can be extended to a closed or monotone one.
The following theorem shows that this remains true in presence of other orbits.

\color{black}
\begin{tw}\label{closedormonotone}
    Let $\seq{A, p} \in \Ult^*_\fin$ be a partial automorphism.
    For every orbit $p^\Z(a)$ of the point sort we can extend $\seq{A, p}$ to $\seq{B, q} \in \Ult^*_\fin$ so that the extended orbit $q^\Z(a)$ is closed or monotone.
\end{tw}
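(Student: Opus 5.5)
By Corollary~\ref{thm:partial_single_orbit}, the orbit $p^\Z(a)$, restricted to its points and the distances they use, is isomorphic to one of $C_{(n_i)}$, $P_{(n_i),L}$, $H_{(n_i),L}$ or $M^{\pm}_{(n_i),L}$. If it is $C_{(n_i)}$ or $M^\pm_{(n_i),L}$, we take $\seq{B,q} = \seq{A,p}$; a partial monotone orbit already has a monotone distance, so every total extension keeps it monotone. The remaining cases are the undecided orbits $P_{(n_i),L}$ and the orbits $H_{(n_i),L}$ with all distances fixed. Here we plan to \emph{close} the orbit: we add the points $a_L, a_{L+1}, \dots, a_{M-1}$ for a suitable $M$ and set $q(a_{M-1}) = a_0$.

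For the choice of $M$, let $(n_i)_{0\le i\le k}$ be the partial constructing sequence and $u_1 > \dots > u_k$ (together with $u_{k+1}$, or the undefined fresh distance $r_{n_k}$) the distances used by the orbit. We will pick $M$ to be a multiple of $n_k$ with $M \ge L$, for instance $M = n_k \lceil L/n_k\rceil$, or a larger multiple if needed. The new orbit will be modelled on $C_{(n_0,\dots,n_k,M)}$ (or on $C_{(n_0,\dots,n_k)}$ when $M = n_k$). In the model, the distance $d(a_m,a_n)$ is $u_j$, where $n_{j}$ is the largest element dividing $n-m$; the bottom level $u_{k+1}$ is the current smallest distance $r_{n_k}$. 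By Observation~\ref{obs:orbit_restrictions_isomorphic}, the restriction of this closed orbit to its first $L$ points agrees with the given orbit. In the fresh $P$ case we declare $D_q(r_{n_k}) = r_{n_k}$, which is consistent because no other constraint is imposed on it.

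The main obstacle is the presence of the other orbits and the global structure of $A$. We must define distances between the new points $a_L,\dots,a_{M-1}$ and every other point $x\in A$, and also extend $D_p$ on $D_A$ consistently. We plan to handle this with an amalgamation. Let $O = p^\Z(a)$ with its induced partial automorphism. The closed orbit $C$ constructed above extends $O\restr{}$ as a partial automorphism, with the same distance set, so $D$ is fixed up to the common part. We then amalgamate $\seq{A,p}$ and $C$ over the orbit $O$ using the canonical ultrametric amalgamation (Lemma~\ref{thm:complete_isometric_amalgamation}). This sets $d(x, c) = \min_{o\in O}\max\{d(x,o), d(o,c)\}$ for $x \in A \setminus O$ and $c \in C\setminus O$, keeping the distance sort $D_A$ with no new distances. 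It remains to check that $q = p\cup s$ is a partial isometry up to $D_p$: for $x\in\dom(p)$ and $c\in C$ we need $d(p(x), s(c)) = D_p(d(x,c))$. Since $s$ maps $O$ into $O$ only partially, the minimum may be realized at an endpoint of $O$. This is where care is needed. We expect to fix it by choosing $M$ as a large multiple of $n_k$, so that every new point $c$ has a "witness" $o \in O$ with $d(o,c)$ equal to the smallest level $r_{n_k}$, both for $c$ and for $s(c)$ at a shifted witness. Because this distance is minimal and fixed, the canonical formula becomes shift-invariant and reduces to $d(x,c) = d(x,o)$, which transfers along $p$. If this witness argument fails for the fresh $P$ case, where $r_{n_k}$ is not yet in $\dom(D_p)$, we instead make the orbit monotone. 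We extend to $M^-_{(n_i),L+1}$ by adding one new distance strictly below all others, so a new point sits at a brand-new minimal distance from $a_{L-1}$. Its distance to any $x\notin O$ is then forced to be $d(x,a_{L-1})$, and the new distance is mapped to itself decreased, which by construction yields a monotone orbit.
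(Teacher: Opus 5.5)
Your handling of the $C_{(n_i)}$ and $M^\pm_{(n_i),L}$ cases is fine. The closing construction for the $P$ and $H$ cases, however, has a genuine gap. It sits exactly at the step you flag as needing care, and no choice of $M$ repairs it. The free-like amalgamation over $O$ places every new point as far from $A\setminus O$ as possible. But the two edges you add, $a_{L-1}\mapsto a_L$ and $a_{M-1}\mapsto a_0$, are constrained by $p$ on the other orbits.

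For example, let $A=\set{a_0,a_1,x,y}$ and $p=\set{a_0\mapsto a_1,\ x\mapsto y}$, with $d(a_0,y)=1/2$ and all other positive distances equal to $1$. Then $D_p(1)=1$ and $O=\set{a_0,a_1}$ is of type $H_{(1),2}$. Any $q\supseteq p$ that closes $O$ by $a_{M-1}\mapsto a_0$ must satisfy $D_q(d(x,a_{M-1}))=d(y,a_0)=1/2<1=D_q(1)$, so $d(x,a_{M-1})<1$. For $M=2$ this already reads $D_q(1)=1/2$. Your formula, however, gives $d(x,c)=\min_{o\in O}\max\set{d(x,o),d(o,c)}\ge 1$ for every new point $c$. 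The witness heuristic fails because the witness for $s(a_{M-1})=a_0$ is $a_0$ itself at distance $0$, and $p$ transports the closeness of $y$ to $a_0$ back to the preimage of $a_0$. Symmetrically, if some $x\in\dom(p)$ is close to $a_{L-1}$, then $a_L$ must be close to $p(x)$, which the free amalgamation also forbids.

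The distance sort is not handled either. $D_q$ must be an order-embedding extending $D_p$, and in the fresh case this can rule out both of your options. Take $a_0\mapsto a_1$ with $d(a_0,a_1)=2$, together with $z\mapsto w$ and $z'\mapsto w'$, where $d(z,z')=1$, $d(w,w')=3$, and all other distances are $4$. Then $O$ is $P_{(1),2}$, but $D_q(1)=3$ and $D_q(4)=4$ force $3<D_q(2)<4$. So $D_q(2)=2$ is impossible, and so is a new distance below all others; the orbit can only become increasing monotone.

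The idea you are missing, which the paper uses, is to first extend $\seq{A,p}$ to a total dc-automorphism $\seq{X,f}$, which exists by homogeneity of $\U$. This settles consistently all distances between the continued orbit and $A$, and all values of $D_f$.
\begin{itemize}
\item If $f^\Z(a)$ is closed or monotone, one simply restricts to a suitable finite part.
\item If $f^\Z(a)$ is horizontal, one closes it not at the ends of $O$ but along an interval $[M,N]$. This interval covers $O$ and contains in its interior the indices of the orbit points lying very close to points of $A\setminus f^\Z(a)$.
\item The interval is chosen with $N-M$ divisible by a suitable $n_i$, so that $d(a_M,a_N)=u_i$ and every other point of $B=A\cup\set{a_M,\dots,a_{N-1}}$ is at distance $\ge u_i$ from both $a_M$ and $a_N$. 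For an infinite constructing sequence this choice uses a pigeonhole count over $\card{A\setminus f^\Z(a)}+1$ shifted intervals.
\item Then $a_{N-1}\mapsto a_M$ can replace $a_{N-1}\mapsto a_N$ without changing $D_f$.
\end{itemize}
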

\begin{proof}
    Let $\seq{X, f} \geq \seq{A, p}$ be any extension to a total dc-automorphism.
    If $f^\Z(a)$ is closed or monotone, we are done as we can take a suitable finite restriction $\seq{B, q} \geq \seq{A, p}$.
    Otherwise, $f^\Z(a)$ is horizontal.
    Let $a_n = f^n(a)$ for $n \in \Z$, let $(n_i)_{i \in I}$ be the constructing sequence (finite or infinite) for $f^\Z(a)$, and let $u_i = d(a_0, a_{n_i})$ for every $i \in I$ (so $\set{u_i: i \in I}$ is the set of all positive distances used in the orbit $f^\Z(a)$).

    Suppose there is an interval $[M, N] \subs \Z$ such that $f^\Z(a) \cap A \subs \set{a_M, \ldots, a_N}$ and $d(a_M, a_N) \leq d(a_M, b), d(a_N, b)$ (and so $d(a_M, b) = d(a_N, b)$) for every $b \in B \setminus \set{a_M, a_N}$ where $B = A \cup \set{a_M, \ldots, a_{N - 1}}$.
    Then we can put $D_B = D_A \cup d_X\im{B \times B}$ and $\seq{B, q_0} = \seq{X, f}\restr{\seq{B, D_B}}$.
    We have $D_f(d(a_{N - 1}, b)) = d(a_N, f(b)) = d(a_M, f(b))$ for every $b \in \dom(q_0) \setminus \set{a_{N - 1}}$.
    For $b = a_{N - 1}$ we have $D_f(d(a_{N - 1}, b)) = 0 = d(a_M, a_M)$.
    It follows that $q = q_0 \cup \set{a_{N - 1} \mapsto a_M}$ with $D_q = D_{q_0}$ is a partial automorphism of $B$ with a closed orbit $q^\Z(a)$.

    It remains to show that we can find a suitable interval $[M, N]$.
    Note that by the ultrametric triangle inequality, for every $b \in A \setminus f^\Z(a)$ there is at most one point $a_{m_b}$ such that $d(a_{m_b}, b) < u_i$ for every $i \in I$, and let us put $m_b = 0$ if there is no such close point.
    Let $[M_0, N_0]$ be such that $f^\Z(a) \cap A \subs \set{a_M, \ldots, a_N}$ and $\set{m_b: b \in A \setminus f^\Z(a)} \subs (M_0, N_0)$.
    
    If the constructing sequence $(n_i)_{i \in I}$ is finite, we let $i = \max(I)$ and we take any interval $[M, N] \supseteq [M_0, N_0]$ such that $n_i$ divides $N - M$.
    Then $d(a_M, a_N)$ is the smallest distance $u_i$, and as $m_b \neq M, N$ for any $b \in A \setminus f^\Z(a)$, we have $d(a_M, b), d(a_N, b) \geq u_i$ and we are done.

    If the constructing sequence $(n_i)$ is infinite, we need to be more careful.
    We take $i$ such that $K := n_i - (N_0 - M_0) \geq \card{A \setminus f^\Z(a)}$, so that $[M(k), N(k)] := [M_0 - k, M_0 - k + n_i] \supseteq [M_0, N_0]$ for every $0 \leq k \leq K$.
    In other words, we consider $K + 1 > \card{A \setminus f^\Z(a)}$ possibilities for the interval $[M, N]$ of length $n_i$, all of which lie in a common interval $[M_1, N_1]$ of length $n_{i + 1} \geq 2n_i$.
    Again by the ultrametric triangle inequality, for every $b \in A \setminus f^\Z(a)$ there is at most one $n_b \in [M_1, N_1)$ such that $d(a_{n_b}, b) < u_i$, and hence there is $0 \leq k \leq K$ such that $\set{M(k), N(k)}$ misses all the values $n_b$.
    Therefore, for $[M, N] = [M(k), N(k)]$ we have $d(a_M, b), d(a_N, b) \geq u_i = d(a_M, a_N)$ for every $b \in A \setminus f^\Z(a)$.
    Since also for every $M < n < N$ we have $d(a_M, a_n), d(a_N, a_n) > u_i$, we are done.
\end{proof}

\color{black}



\section{Amalgamation bases and determined partial automorphisms}

In this section, we present a general strategy (Strategy \ref{strategy}), which applies to nice categories of structures (Definition \ref{nice}), for showing that the class of finite partial automorphisms has the CAP.

A structure $A$ is \emph{locally finite} if for every finite subset $F \subseteq A$ the substructure of $A$ generated by $F$ is finite as well.

\begin{df}\label{nice}
    In order to collect the relevant assumptions, we say that a category of structures $\C$ is \emph{nice} if it is hereditary, $\sigma$-complete, consists of locally finite structures, and $\C_\fin$ is Fraïssé.
\end{df}

\begin{df}
  Let $\C$ be a category of structures. 
    We say that $\seq{A, p} \in \C^*_\fin$ has a \emph{unique totalization} (or is \emph{determined}) if there is an extension $\seq{A, p} \to \seq{\hat{A}, \hat{p}}$ (in $\C^*$) to a total automorphism such that every extension $\seq{A, p} \to \seq{X, f}$ to a total automorphism (in $\C^*$) admits a unique factorization $\seq{\hat{A}, \hat{p}} \to \seq{X, f}$.
    In other words, the orbits of $\seq{A, p}$ can be completed to full orbits in a unique way (up to an isomorphism).
    The extension $\seq{\hat{A}, \hat{p}}\in \C^*$ we call the \emph{unique totalization} (of $\seq{A, p}$).
\end{df}

The concept of a determined partial automorphism appears already in Kuske--Truss \cite{KuTr}, where they show the existence of a generic automorphism of the universal partial order. The idea of extending a partial automorphism to a determined one was further used by Kaplan--Rzepecki--Siniora \cite{KaRzSi}, who show that there is a generic automorphism of the universal meet-tree. They point out that amalgamation bases are determined (\cite[Proposition 4.2]{KaRzSi}) and have an analog of our Lemma \ref{thm:pointwise_determined}  (\cite[Lemma 4.11]{KaRzSi}). In their specific situation amalgamation bases correspond to determined partial automorphisms (\cite[Corollary 4.19]{KaRzSi}).
Moreover, in order to show the CAP for finite meet-trees with partial automorphisms they show amalgamation of infinite meet-trees with total automorphisms (\cite[Lemma 4.16]{KaRzSi}). That corresponds to our Corollary \ref{thm:dc_auto_amalgamation}.
We only found out about the ideas used in their article after completing our proofs. We further develop those ideas into a general strategy usable for a nice category of structures (Strategy \ref{strategy}).

\begin{observation} \label{obs:totalization_exists}
Note that if $\C$ is nice then every $\seq{A, p} \in \C_\fin^*$ admits an extension to a total automorphism $f$ of a structure $X \in \sigma\C$, namely we let $X$ be the Fraïssé limit and we apply the homogeneity.
\end{observation}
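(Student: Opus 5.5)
The plan is to take the Fraïssé limit $U$ of $\C_\fin$, which exists because $\C$ is nice, and use its homogeneity. Since $\C$ is $\sigma$-complete, $U \in \sigma\C_\fin \subseteq \C$.

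First, I realize $\seq{A,p}$ inside $U$. By universality of $U$ for $\C_\fin$ there is an embedding $e\maps A \to U$, and we may replace $A$ by its image, so that $A \leq U$. Heredity of $\C$ ensures that $\dom(p)$ and $\rng(p)$ are finite substructures of $A$, hence finite substructures of $U$ lying in $\C_\fin$.

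Next, $p$ is an isomorphism $\dom(p) \to \rng(p)$ between finite substructures of $U$. Homogeneity of $U$ for $\C_\fin$ (Theorem~\ref{thm:Flim}) gives $f \in \Aut(U)$ with $f\restr{\dom(p)} = p$. In the many-sorted case the same holds on every sort simultaneously, since homogeneity is stated for embeddings of many-sorted structures.

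Finally, the inclusion $e\maps A \to U$ satisfies $e \cmp p = f \cmp e$ on $\dom(p)$. Hence it is a $\C^*$-map $\seq{A,p} \to \seq{U,f}$, and $\seq{U,f}$ is a total automorphism of a structure in $\sigma\C$, as required.

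There is no real obstacle. The only point to check is that the relevant substructures lie in $\C_\fin$, so that homogeneity applies, and this is exactly what heredity and the finiteness of $A$ provide. Local finiteness is not needed here.
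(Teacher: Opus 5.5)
Your proposal is correct and is the same argument the paper gives: embed $A$ into the Fraïssé limit by universality, then extend the isomorphism $p\maps \dom(p) \to \rng(p)$ to an automorphism of the limit by homogeneity. One minor point: heredity is not needed for this step, because $\dom(p)$ and $\rng(p)$ are already $\C$-objects by the definition of a partial automorphism as a $\C$-isomorphism.
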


\begin{lm} \label{thm:amalgamation_base_determined}
    Let $\C$ be a nice category of structures.
    If $\seq{A, p} \in \C^*_\fin$ is an amalgamation base, then it is determined.
\end{lm}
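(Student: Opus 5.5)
The plan is to build the candidate totalization inside one fixed total extension, and then use amalgamation of finite pieces to show that every other total extension receives it in a unique way.

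\emph{Construction of the candidate.} By Observation~\ref{obs:totalization_exists}, $\seq{A,p}$ has an extension to a total automorphism $\seq{X,f}$. Working up to isomorphism, we may assume $A\leq X$ and $f\supseteq p$. Let $\hat A\leq X$ be the substructure generated by the $f$-invariant set $f^\Z(A)=\bigcup_{n\in\Z}f^n\im{A}$ (taken in every sort), and let $\hat p=f\restr{\hat A}$. Since the generating set is $f$-invariant and $f$ is an automorphism, $f$ maps $\hat A$ onto $\hat A$. Hence $\seq{\hat A,\hat p}$ is a total automorphism extending $\seq{A,p}$. By local finiteness, each $S_n:=\langle f^{[-n,n]}(A)\rangle$ is finite. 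Thus $\hat A=\bigcup_n S_n$ is a countable union of finite structures, which lies in $\C$ by heredity and $\sigma$-completeness.

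\emph{Uniqueness of the factorization.} Let $\seq{A,p}\to\seq{Y,g}$ be any extension to a total automorphism; again we may assume $A\leq Y$. Any $\C^*$-map $h\maps\seq{\hat A,\hat p}\to\seq{Y,g}$ over $A$ must satisfy $h(f^k(a))=g^k(a)$ for all $a\in A$ and $k\in\Z$. An embedding is determined by its values on a generating set, so $h$ is unique.

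\emph{Existence of the factorization.} This is the main step, and it is where the amalgamation base hypothesis is used. Fix $n$ and consider the finite restrictions
\[
    \seq{B_n,q_n}=\seq{X,f}\restr{S_n}
    \quad\text{and}\quad
    \seq{C_n,r_n}=\seq{Y,g}\restr{T_n},
    \qquad\text{where } T_n=\langle g^{[-n,n]}(A)\rangle.
\]
By heredity these lie in $\C^*_\fin$, and both extend $\seq{A,p}$. Since $\seq{A,p}$ is an amalgamation base, there are $\seq{D,s}$ and $\C^*$-maps $\beta_1\maps\seq{B_n,q_n}\to\seq{D,s}$ and $\beta_2\maps\seq{C_n,r_n}\to\seq{D,s}$ that agree on $A$. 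For $a\in A$ and $0\leq k<n$, the point $f^k(a)$ lies in $\dom(q_n)$, because $f^{k+1}(a)\in S_n$; similarly for negative exponents and for $g$. Since $\beta_1,\beta_2$ commute with the partial automorphisms, induction on $\lvert k\rvert$ gives
\[
    \beta_1(f^k(a))=s^k(\beta_1(a))=s^k(\beta_2(a))=\beta_2(g^k(a))\qquad\text{for }\lvert k\rvert\leq n.
\]
Embeddings carry generated substructures onto generated substructures. Hence $\beta_1\im{S_n}$ and $\beta_2\im{T_n}$ are both the substructure of $D$ generated by the same set, and so they coincide. Therefore $h_n:=\beta_2^{-1}\cmp\beta_1\restr{S_n}\maps S_n\to T_n\leq Y$ is an embedding sending $f^k(a)\mapsto g^k(a)$. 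In particular, the assignment $f^k(a)\mapsto g^k(a)$ is well defined: if $f^k(a)=f^j(b)$, then $g^k(a)=g^j(b)$.

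\emph{Gluing.} By the uniqueness argument, the $h_n$ are compatible (each is determined by its values on generators). Their union $h=\bigcup_n h_n\maps\hat A\to Y$ is an embedding which is the identity on $A$, and which satisfies $h\cmp\hat p=g\cmp h$ on the generators. Both $h\cmp\hat p$ and $g\cmp h$ are embeddings $\hat A\to Y$ agreeing on the generating set $f^\Z(A)$, so they are equal on all of $\hat A$. This gives the required unique factorization, so $\seq{A,p}$ is determined with unique totalization $\seq{\hat A,\hat p}$.

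The step I expect to need the most care is the existence step: checking that the amalgamating maps really force $\beta_1(f^k(a))=\beta_2(g^k(a))$. This depends on the domains of the restricted partial automorphisms $q_n$ and $r_n$ being large enough, and on the generation argument working sort by sort.
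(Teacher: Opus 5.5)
Your proof is correct and follows the same route as the paper's: you take $\hat A$ generated by the full $f$-orbits of $A$ inside one total extension, amalgamate the finite restrictions generated by $f^{[-n,n]}$ and $g^{[-n,n]}$ over $\seq{A,p}$ to force the identification $f^k(a)\mapsto g^k(a)$, and glue the resulting embeddings. You spell out more carefully than the paper why the amalgam forces this identification (through the domains of the restricted partial automorphisms and the generated substructures), a step the paper leaves implicit.
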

\begin{proof}
    Let $\seq{A, p} \leq \seq{X, f}$ be any extension to a total automorphism and let $\set{\seq{a^i_0, \ldots, a^i_{n_i}}\colon i \in I}$ be an enumeration of the orbits (of all of the sorts) of $\seq{A, p}$.
    We take $\seq{\hat{A}, \hat{p}}$ to be the substructure of $\seq{X, f}$ generated by the set $\set{f^k(a^i_0): k \in \Z, i \in I}$ of elements of the full orbits.
    Note that $\seq{\hat{A}, \hat{p}}$ is a total automorphism as $\hat{A}$ is $f$-invariant.
    Since $\C$ is hereditary, $\seq{\hat{A}, \hat{p}} \in \C^*$.

    We show that for every other total extension $e\maps \seq{A, p} \to \seq{Y, g}$ there is a unique $\C^*$-map $\seq{\hat{A}, \hat{p}} \to \seq{Y, g}$ extending $e$.
    For every $n \in \N$ let $A_n \leq \hat{A}$ be generated by $\set{f^k(a^i_0): k \in [-n, n], i \in I}$ and let $B_n \leq Y$ be generated by $\set{g^k(e(a^i_0)): k \in [-n, n], i \in I}$.
    Moreover, we put $p_n = \hat{p}\restr{A_n}$ and $q_n = g\restr{B_n}$ and we let let $n_0 \in \N$ be such that $A \leq A_{n_0}$ and $e\im{A} \leq B_{n_0}$.
    Since $\C$ is hereditary and locally finite, $A_n, B_n \in \C_\fin$.
    Since $\seq{A, p}$ is an amalgamation base, the extensions $\seq{A, p} \leq \seq{A_n, p_n}$ and $e_n\maps \seq{A, p} \to \seq{B_n, q_n}$ can be amalgamated in $\C_\fin^*$, and necessarily $f^k(a^i_0)$ and $g^k(e(a^i_0))$ for $k \in [-n, n]$ and $i \in I$ need to be identified in the amalgamation. Therefore we have a unique isomorphism $\seq{A_n, p_n} \to \seq{B_n, q_n}$ and a unique embedding $\seq{A_n, p_n} \to \seq{Y, g}$ extending~$e$.
    Together, we have a unique embedding $\seq{\hat{A}, \hat{p}} \to \seq{Y, g}$ extending $e$.
\end{proof}

\begin{tw} \label{thm:amalgamation_from_totalization}
    Let $\C$ be a nice category of structures such that the full subcategory of $\C^*$ consisting of total automorphisms has AP.
    Then $\C^*_\fin$ has CAP if and only if determined partial automorphisms are cofinal in $\C^*_\fin$.
    In that case, $\seq{A, p} \in \C^*_\fin$ is an amalgamation base if and only if it is determined.
\end{tw}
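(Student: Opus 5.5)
The key claim is that every determined $\seq{A, p} \in \C^*_\fin$ is an amalgamation base. Combined with Lemma~\ref{thm:amalgamation_base_determined}, this yields the characterization ``amalgamation base $\iff$ determined''. Both directions of the CAP equivalence then follow at once. If determined partial automorphisms are cofinal, every object embeds into an amalgamation base, which is CAP. Conversely, CAP gives cofinally many amalgamation bases, and these are determined by Lemma~\ref{thm:amalgamation_base_determined}.

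To prove the key claim, let $\seq{A, p}$ be determined with unique totalization $\seq{\hat{A}, \hat{p}}$, and let $\alpha_1\maps \seq{A,p} \to \seq{B, q}$ and $\alpha_2\maps \seq{A, p} \to \seq{C, r}$ be maps in $\C^*_\fin$.

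First, by Observation~\ref{obs:totalization_exists}, we extend $\seq{B, q} \leq \seq{X, f}$ and $\seq{C, r} \leq \seq{Y, g}$ to total automorphisms with $X, Y \in \sig\C$.

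Second, the composites $\seq{A,p} \to \seq{X,f}$ and $\seq{A,p} \to \seq{Y,g}$ are total extensions of $\seq{A,p}$. By determinedness they factor uniquely through maps $\beta\maps \seq{\hat{A},\hat{p}} \to \seq{X,f}$ and $\gamma\maps \seq{\hat{A},\hat{p}} \to \seq{Y,g}$.

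Third, since $\seq{\hat{A},\hat{p}}$ is total, the hypothesis that total automorphisms have AP in $\C^*$ gives a total $\seq{Z,h}$ and maps $\beta'\maps\seq{X,f}\to\seq{Z,h}$ and $\gamma'\maps\seq{Y,g}\to\seq{Z,h}$ with $\beta'\cmp\beta = \gamma'\cmp\gamma$. Composing with the factorizations, $\beta'\cmp\beta$ and $\gamma'\cmp\gamma$ agree on the image of $A$. Hence the induced maps $\seq{B,q} \to \seq{Z,h}$ and $\seq{C,r} \to \seq{Z,h}$ agree after precomposition with $\alpha_1$ and $\alpha_2$.

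Finally, we return to finite objects. Let $D \leq Z$ be the substructure generated by the images of $B$ and $C$; it is finite because $\C$ consists of locally finite structures. It lies in $\C$ because $\C$ is hereditary. Take the restriction $\seq{D, h\restr{D}} \in \C^*_\fin$, which is legitimate since $\C$ is hereditary. The maps from $\seq{B,q}$ and $\seq{C,r}$ into $\seq{Z,h}$ land in $D$ and still commute with the partial automorphisms: if $b \in \dom(q)$, the images of $b$ and $q(b)$ both lie in $D$, so $b$ maps into $\dom(h\restr{D})$. This gives the amalgam in $\C^*_\fin$.

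The step needing the most care is the use of AP for total automorphisms: $\hat{A}$, $X$, $Y$ are generally infinite, so we must check that the hypothesis is applied in $\C^*$ rather than $\C^*_\fin$, and that $\hat{A}$ indeed belongs to the subcategory of total automorphisms. The latter is guaranteed by the definition of the unique totalization, which is taken in $\C^*$.
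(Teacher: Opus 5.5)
Your proof is correct and follows the paper's route: you factor both extensions through the unique totalization $\seq{\hat{A},\hat{p}}$, amalgamate the resulting total automorphisms in $\C^*$, and cut back to the finite substructure generated by the images. The one difference is that the paper first uses the cofinality hypothesis to assume the $\seq{B_i,q_i}$ are determined and then takes their unique totalizations, whereas you take arbitrary total extensions from Observation~\ref{obs:totalization_exists}. This is harmless, and it even shows that determined partial automorphisms are amalgamation bases without assuming cofinality, so the characterization holds under the theorem's hypotheses alone.
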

\begin{proof}
    One implication follows from Lemma~\ref{thm:amalgamation_base_determined}.
    For the other implication suppose that determined partial automorphisms are cofinal.
    Let $\seq{A, p} \in \C^*_\fin$ be determined. We show that $\seq{A, p}$ is an amalgamation base. 
    Let $f_i\maps \seq{A, p} \to \seq{B_i, q_i}$ for $i = 1, 2$ be an extension in $\C^*_\fin$.
    Since determined partial automorphisms are cofinal, we can without loss of generality assume that $\seq{B_i, q_i}$ are determined.
    We will construct maps as in Figure~\ref{fig:amalgamation_from_totalization}.
    Let $e_A\maps \seq{A, p} \to \seq{\hat{A}, \hat{p}}$ and $e_{B_i}\maps \seq{B_i, q_i} \to \seq{\hat{B_i}, \hat{q_i}}$ be the unique totalizations.
    Let $\hat{f_i}\maps \seq{\hat{A}, \hat{p}} \to \seq{\hat{B_i}, \hat{q_i}}$ be the unique $\C^*$-maps such that $\hat{f_i} \cmp e_A = e_{B_i} \cmp f_i$.
    Let $\seq{Z, h} \in \C^*$ be a total automorphism realizing an amalgamation of the maps $\hat{f_i}$, i.e. there are maps $g_i\maps \seq{\hat{B_i}, \hat{q_i}} \to \seq{Z, h}$ such that $g_1 \cmp \hat{f_1} = g_2 \cmp \hat{f_2}$.
    Then we have $g_1 \cmp e_{B_1} \cmp f_1 = g_2 \cmp e_{B_2} \cmp f_2$, which is an amalgamation in $\C^*$.
    To obtain an amalgamation in $\C_\fin^*$ we just restrict to the substructure of $Z$ generated by $g_1\im{e_{B_1}\im{B_1}} \cup g_2\im{e_{B_2}\im{B_2}}$, which is in $\C_\fin$ as $\C$ is hereditary and locally finite.
\end{proof}

\begin{figure}[!ht]
    \centering
\begin{tikzpicture}[
    x = {(10em, 0em)},
    y = {(0em, 4em)},
    label/.style = {
        edge label=#1,
        font=\footnotesize,
    }
]
    \node (A) at (0, 0) {$\seq{A, p}$};
    \node (B1) at (-1, 1) {$\seq{B_1, q_1}$};
    \node (B2) at (+1, 1) {$\seq{B_2, q_2}$};
    
    \node (Ah) at (0, 1) {$\seq{\hat{A}, \hat{p}}$};
    \node (B1h) at (-1, 2) {$\seq{\hat{B_1}, \hat{q_1}}$};
    \node (B2h) at (+1, 2) {$\seq{\hat{B_2}, \hat{q_2}}$};

    \node (Z) at (0, 3) {$\seq{Z, h}$};
    \node (C) at (0, 2) {$\seq{Z, h}\restr{\seq{B_1 \cup B_2}}$}; 

    \graph{
        (A) ->[label=$f_1$] (B1),
        (A) ->[label=$f_2$, swap] (B2),
        (A) ->[label=$e_A$] (Ah),
        (Ah) ->[label=$\hat{f}_1$, pos=0.2, inner sep=2pt] (B1h),
        (Ah) ->[label=$\hat{f}_2$, pos=0.2, inner sep=2pt, swap] (B2h),
        (B1) ->[label=$e_{B_1}$] (B1h),
        (B2) ->[label=$e_{B_2}$, swap] (B2h),
        (B1h) ->[label=$g_1$] (Z),
        (B2h) ->[label=$g_2$, swap] (Z),
        (C) ->[dashed] (Z),
        {(B1), (B2)} ->[dashed] (C),
    };
\end{tikzpicture}
    \caption{Amalgamation using unique totalizations.}
    \label{fig:amalgamation_from_totalization}
\end{figure}

In order to detect determined partial automorphisms and to show that they are cofinal, it is useful to work with one-point orbit extensions.

\begin{df}
Let $\C$ be a category of structures.
A \emph{forward one-point orbit extension} of $\seq{A, p} \in \C^*$ at $x \in A \setminus \dom(p)$ is an extension $\seq{B, q} \geq \seq{A, p}$ in $\C^*$ such that $x \in \dom(q)$, $B$ is generated by $A \cup \set{q(x)}$, $\dom(q)$ is generated by $\dom(p) \cup \set{x}$, $\rng(q)$ is generated by $\rng(p) \cup \set{q(x)}$.
Note that in the case of relational structures, we just extend the domain and the range by one point.
Also note that we may deal with many-sorted structures, so every point has the associated sort.

We say that two forward one-point orbit extensions $\seq{B_1, q_1}, \seq{B_2, q_2} \geq \seq{A, p}$ at $x$ are {\it isomorphic}, if $\id{A} \cup \{q_1(x) \mapsto q_2(x)\}$ extends to a (necessarily unique) isomorphism $\seq{B_1, q_1} \to \seq{B_2, q_2}$.
When we say that one-point orbit extensions are unique, we mean unique up to isomorphism.

Analogously we define \emph{backward one-point orbit extensions} at $x \in A \setminus \rng(p)$ and their isomorphisms.
In particular $\seq{A, p} \leq \seq{B, q}$ is a backward one-point orbit extension at $x$ if and only if $\seq{A, p^{-1}} \leq \seq{B, q^{-1}}$ is a forward one-point orbit extension at $x$.
\end{df}

\begin{observation} \label{thm:determined_to_pointwise}
    Let $\C$ be a nice category of structures.
    If $\seq{A, p} \in \C^*_\fin$ is determined, then it has unique one-point orbit extensions at all points, and these extensions are themselves determined.
\end{observation}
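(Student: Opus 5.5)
We work with the unique totalization $e\maps \seq{A, p} \to \seq{\hat{A}, \hat{p}}$ and treat the two claims separately; by symmetry ($p \leftrightarrow p^{-1}$) it suffices to consider forward one-point orbit extensions at a point $x \in A \setminus \dom(p)$.

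\textbf{Existence and uniqueness.} Identify $A$ with $e\im{A} \leq \hat{A}$ and let $B_0$ be the substructure of $\hat{A}$ generated by $A \cup \set{\hat{p}(x)}$. Since $\C$ is hereditary and locally finite, $B_0 \in \C_\fin$. Then $q_0 = \hat{p}\restr{B_0}$ restricted to the substructure generated by $\dom(p) \cup \set{x}$ is a forward one-point orbit extension $\seq{B_0, q_0}$ of $\seq{A, p}$ at $x$. Here we use that $\hat{p}$ maps the substructure generated by $\dom(p)\cup\set{x}$ onto the one generated by $\rng(p) \cup \set{\hat{p}(x)}$.

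For uniqueness, let $\seq{B, q}$ be any forward one-point orbit extension at $x$. By Observation~\ref{obs:totalization_exists} it extends to a total automorphism $\seq{X, f}$. Since $\seq{A, p}$ is determined, the embedding $\seq{A, p} \to \seq{X, f}$ factors uniquely through $\seq{\hat{A}, \hat{p}}$ via a map $g$. Then $g(\hat{p}(x)) = f(x) = q(x)$. Because $B$ is generated by $A \cup \set{q(x)}$, $g$ restricts to an isomorphism $B_0 \to B$ that is the identity on $A$ and sends $\hat{p}(x) \mapsto q(x)$. It commutes with the partial maps, since $g$ commutes with the total ones and the domains are generated correspondingly. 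Hence $\seq{B, q} \cong \seq{B_0, q_0}$.

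\textbf{The extension is determined.} We claim $\seq{\hat{A}, \hat{p}}$, with the inclusion $\seq{B_0, q_0} \leq \seq{\hat{A}, \hat{p}}$, is the unique totalization of $\seq{B_0, q_0}$. Let $h\maps \seq{B_0, q_0} \to \seq{Y, k}$ be an embedding into a total automorphism. Its restriction to $\seq{A, p}$ factors uniquely as $h' \cmp e$ with $h'\maps \seq{\hat{A}, \hat{p}} \to \seq{Y, k}$. Then $h'$ agrees with $h$ on $A$, and
\[
h'(\hat{p}(x)) = k(h(x)) = h(q_0(x)) = h(\hat{p}(x)).
\]
So $h'$ and $h$ agree on a generating set of $B_0$, hence $h'$ extends $h$. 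Any other factorization of $h$ also extends $h\restr{A}$, so it coincides with $h'$ by uniqueness for $\seq{A, p}$.

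The main point of care is the step about generated substructures. We need that embeddings agreeing on generators agree on the generated substructure, and that the domain and range of $q_0$ are correctly the generated substructures. This is routine for embeddings of structures, and in the relational case it is trivial.
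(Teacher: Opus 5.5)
Your proof is correct and follows essentially the same idea as the paper: you compare one-point orbit extensions through the unique factorization via $\seq{\hat{A}, \hat{p}}$, using that $B$ is generated by $A \cup \set{q(x)}$. The difference is minor. You build a canonical extension $\seq{B_0, q_0}$ inside $\hat{A}$ and map it onto any other extension by the factorization map. The paper instead totalizes two arbitrary extensions by orbit-generated structures and identifies both with $\hat{A}$. Your version also spells out why $\seq{\hat{A}, \hat{p}}$ is the unique totalization of the extension, which the paper only asserts.
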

\begin{proof}
    Let $\seq{B_1, q_1}$ and $\seq{B_2, q_2}$ be (without loss of generality forward) one-point orbit extensions of $\seq{A, p}$ at $x$.
    For $i = 1, 2$, let $\seq{X_i, f_i} \in \C^*$  be an extension of $\seq{B_i, q_i}$ to a total automorphism such that $X_i$ is generated by the orbits $f_i^\Z(b)$ for $b \in B_i$.
    We first take any extension to a total automorphism, which exists since $\C_\fin$ is Fraïssé, and then we restrict to the substructure generated by the orbits of the points in $B_i$.
    The restriction is in $\C^*$ since $\C$ is nice.
    It follows that both $\seq{X_1, f_1}$ and $\seq{X_2, f_2}$ are isomorphic to the unique totalization of $\seq{A, p}$, so there is a unique isomorphism $f\maps \seq{X_1, f_1} \to \seq{X_2, f_2}$ extending $\id{A}$.
    Hence, $f(q_1(x)) = q_2(x)$ and the restriction of $f$ gives the isomorphism of the one-point extensions $\seq{B_1, q_1}$ and $\seq{B_2, q_2}$.
    Moreover, $\seq{X_i, f_i}$ is the unique totalization of $\seq{B_i, q_i}$ for $i = 1, 2$.
\end{proof}

The following lemma and proposition can be viewed as a converse to the observation above.

\begin{lm} \label{thm:pointwise_determined}
    Let $\C$ be a nice category of structures and let $(\seq{A_n, p_n})_n$ be a countable increasing chain (finite or infinite) of one-point orbit extensions in $\C^*_\fin$, i.e. $\seq{A_{n+1}, p_{n+1}}$ is a forward or backward one-point orbit extension of $\seq{A_n, p_n}$ at $x_n \in A_n$, and suppose that $\seq{\hat{A}, \hat{p}} = \bigcup_n \seq{A_n, p_n}$ is a total automorphism.
    Then $\seq{\hat{A}, \hat{p}}$ is the unique totalization of $\seq{A_0, p_0}$ if and only if every $\seq{A_{n + 1}, p_{n + 1}}$ is the unique forward/backward one-point orbit extension of $\seq{A_n, p_n}$ at $x_n$.
\end{lm}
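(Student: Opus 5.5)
The two implications need different arguments. The forward direction (from the union being the unique totalization to every step being a unique one-point orbit extension) comes from the determinedness results already proved. The backward direction needs an inductive construction of embeddings. Throughout, the union is taken inside $\C^*$, which is $\sigma$-complete, and each $\seq{A_n, p_n}$ lies in $\C^*_\fin$.

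For ($\Rightarrow$), assume $\seq{\hat{A}, \hat{p}}$ is the unique totalization of $\seq{A_0, p_0}$.

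First, $\seq{A_n, p_n}$ is determined for every $n$, with the same totalization $\seq{\hat{A}, \hat{p}}$.
\begin{itemize}
  \item Take any total extension $e\maps \seq{A_n, p_n} \to \seq{X, f}$. Restricting to $A_0$, it factors uniquely through $\seq{\hat{A}, \hat{p}}$ via some $\phi$.
  \item The element $x_0$ lies in $A_0$. The new point $p_1(x_0)$ (or $p_1^{-1}(x_0)$) is $\hat{p}^{\pm1}(x_0)$, and $e$ sends it to $f^{\pm 1}(e(x_0))$. Since $\phi$ commutes with the automorphisms, $\phi$ sends it to the same point.
  \item $A_1$ is generated by $A_0$ together with this point, so $\phi$ agrees with $e$ on $A_1$. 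Inductively $\phi$ agrees with $e$ on $A_n$.
  \item Uniqueness of $\phi$ as an extension of $e$ follows from its uniqueness over $A_0$.
\end{itemize}
Now Observation~\ref{thm:determined_to_pointwise} applies to $\seq{A_n, p_n}$: one-point orbit extensions at $x_n$ are unique. Hence $\seq{A_{n+1}, p_{n+1}}$ is the unique such extension.

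For ($\Leftarrow$), assume every step is the unique one-point orbit extension. Let $e\maps \seq{A_0,p_0} \to \seq{X,f}$ be a total extension. I build compatible embeddings $e_n\maps \seq{A_n,p_n} \to \seq{X,f}$ by induction, starting from $e_0 = e$.

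Suppose $e_n$ is given and the step is forward at $x_n$.
\begin{itemize}
  \item Let $B \leq X$ be generated by $e_n\im{A_n} \cup \set{f(e_n(x_n))}$. It is finite by local finiteness and lies in $\C$ by heredity.
  \item Let $q$ be $f\restr{B}$ cut down so that $\dom(q)$ is generated by $e_n\im{\dom(p_n)} \cup \set{e_n(x_n)}$.
  \item Pulling back along $e_n$, $\seq{B,q}$ becomes a forward one-point orbit extension of $\seq{A_n,p_n}$ at $x_n$.
\end{itemize}
Uniqueness then gives an isomorphism $\seq{A_{n+1},p_{n+1}} \to \seq{B,q}$ extending $e_n$. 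Composing with the inclusion into $X$ yields $e_{n+1}$. The backward case is symmetric.

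Each $e_{n+1}$ is forced. It must send $p_{n+1}(x_n)$ to $f(e_n(x_n))$, and $A_{n+1}$ is generated by $A_n \cup \set{p_{n+1}(x_n)}$. So the union $\bigcup_n e_n$ is the unique factorization through $\seq{\hat{A},\hat{p}}$.

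If the chain is finite, $\hat{A}$ is its last member and the same argument applies verbatim.

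I expect the main obstacle to be the technical point in ($\Leftarrow$): checking that the cut-down $\seq{B,q}$ really meets the definition of a one-point orbit extension. This requires that $\dom(q)$ and $\rng(q)$ be exactly the generated substructures and that $q$ be an isomorphism between them. It holds because $f$ maps the substructure generated by $e_n\im{\dom(p_n)}\cup\set{e_n(x_n)}$ onto the one generated by the images, which are contained in $B$. I will verify this using heredity.
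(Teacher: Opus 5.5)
Your proposal is correct. For the ``if'' direction it is the paper's own argument: you build the embeddings $e_n$ inductively by comparing with the one-point orbit extension cut out of $\seq{X,f}$ at $e_n(x_n)$, invoke uniqueness, and get uniqueness of $\bigcup_n e_n$ from the forced value on $p_{n+1}^{\pm1}(x_n)$.

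For the ``only if'' direction the route differs slightly but validly. The paper applies Observation~\ref{thm:determined_to_pointwise} inductively, using its clause that the one-point extensions are themselves determined. You instead verify directly that each $\seq{A_n,p_n}$ is determined with totalization $\seq{\hat{A},\hat{p}}$, by restricting to $A_0$, and then apply the observation once at each step.
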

\begin{proof}
    The ``only if'' part follows from Observation~\ref{thm:determined_to_pointwise}.
    For the ``if'' part let $e_0\maps \seq{A_0, p_0} \to \seq{X, f}$ be any $\C^*$-map to a total automorphism.
    We want to show that there is a unique extension $\hat{e}\maps \seq{\hat{A}, \hat{p}} \to \seq{X, f}$.
    We proceed by induction.
    Suppose that we already have an extension $e_n\maps \seq{A_n, p_n} \to \seq{X, f}$ of $e_0$.
    We extend it to $e_{n + 1}\maps \seq{A_{n + 1}, p_{n + 1}} \to \seq{X, f}$.
    Let $\seq{B, q} \leq \seq{X, f}$ be the forward/backward one-point orbit extension of the copy of $\seq{A_n, p_n}$ at $e_n(x_n)$ coming from the restriction of $\seq{X, f}$.
    Then, $e_n\maps \seq{A_n, p_n} \to \seq{B, q}$ is a forward/backward one-point orbit extension at $x_n$, which is by uniqueness isomorphic to $\seq{A_{n+1}, p_{n+1}}$, so we let $e_{n + 1}\maps \seq{A_{n + 1}, p_{n + 1}} \to \seq{B, q} \leq \seq{X, f}$ be the isomorphism.
    Finally, we put $\hat{e} = \bigcup_n e_n$.
    As any $\hat{e}$ needs to satisfy $\hat{e}(p_{n + 1}(x_n)) = f(\hat{e}(x_n))$ for every $n$, $\hat{e}$ is uniquely determined on $A_{n + 1}$, given it is uniquely determined on $A_n$, and therefore we get uniqueness by induction as well.
\end{proof}

\color{black}

\begin{prop}\label{prop: suffimpldeterm}
    Let $\C$ be a nice category of structures.
    Suppose that $\Su$ is a full subcategory of $\C^*_\fin$ with the following properties.
     \begin{enumerate}
        \item Partial automorphisms from $\Su$ have a unique one-point orbit extension at every point.
        \item One-point orbit extensions of partial automorphisms in $\Su$ are themselves in $\Su$.
    \end{enumerate}
    Then any $\seq{A, p}\in\Su$ is determined. 
    Consequently, if we additionally have that
    \begin{enumerate}[resume]
        \item $\Su$ is cofinal in $\C^*_\fin$,
    \end{enumerate}
    then determined partial automorphisms are cofinal in $\C^*_\fin$.
\end{prop}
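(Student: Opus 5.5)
Given $\seq{A, p} \in \Su$, the plan is to build an increasing chain of one-point orbit extensions starting at $\seq{A_0, p_0} = \seq{A, p}$ whose union is a total automorphism, and then apply Lemma~\ref{thm:pointwise_determined}.

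First, we fix a bookkeeping enumeration of tasks. At stage $n$ we choose a point $x_n \in A_n$ with $x_n \notin \dom(p_n)$ (a forward task) or $x_n \notin \rng(p_n)$ (a backward task). For the extension $\seq{A_{n+1}, p_{n+1}}$ we take a one-point orbit extension at $x_n$. Such an extension exists: by Observation~\ref{obs:totalization_exists}, $\seq{A_n, p_n}$ extends to a total automorphism $\seq{X, f}$, and restricting $\seq{X, f}$ to the substructure generated by $A_n \cup \set{f(x_n)}$ (or $f^{-1}(x_n)$) gives one. Here we use that $\C$ is hereditary and locally finite, so the restriction lies in $\C^*_\fin$, and that the generated domain and range are correct. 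By property (2) and induction, every $\seq{A_n, p_n}$ lies in $\Su$, so by (1) each step is the unique one-point orbit extension at $x_n$.

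Next, we check that the union $\seq{\hat A, \hat p}$ is total. Every element of every $A_n$ is eventually scheduled both forward and backward; this is a standard dovetailing, since each $A_n$ is finite and there are countably many stages. Thus $\dom(\hat p) = \rng(\hat p) = \hat A$. The point needing care is that once $x \in \dom(p_m)$ it stays in $\dom(p_n)$ for $n \geq m$, which holds because the extensions are increasing. Also, $\dom(\hat p)$ and $\rng(\hat p)$ are the unions of the generated substructures, and these unions equal $\hat A$. If the tasks ever run out, the chain is finite and the last stage is already total. The union is in $\C^*$ by $\sigma$-completeness. Lemma~\ref{thm:pointwise_determined} then says $\seq{\hat A, \hat p}$ is the unique totalization of $\seq{A, p}$, so $\seq{A, p}$ is determined.

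The second statement follows immediately: by (3), every object of $\C^*_\fin$ embeds into some member of $\Su$, and such a member is determined by the first part. The main obstacle is the bookkeeping argument that the union is total. In particular, for non-relational structures one must make sure that newly generated elements are also scheduled, which the enumeration of all elements of all $A_n$ handles.
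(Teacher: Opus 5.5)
Your proof is correct and is essentially the paper's argument: a chain of one-point orbit extensions starting at $\seq{A,p}$ whose union is total, kept inside $\Su$ by (2), unique at each step by (1), and finished by Lemma~\ref{thm:pointwise_determined}, with the same deduction of the second claim from (3). The only difference is that the paper extracts the chain by enumerating the orbit closure of $A$ inside one fixed totalization, so totality of the union is automatic, whereas you build the chain inductively by dovetailing, so each step is a one-point orbit extension by construction (provided you take $f$ restricted to the substructure generated by $\dom(p_n) \cup \set{x_n}$, not the full restriction $\seq{X, f}\restr{B}$ to the substructure $B$ generated by $A_n \cup \set{f(x_n)}$, whose domain may be larger).
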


\begin{proof}
    Let $\seq{A, p} \in \Su$.
    Let $\seq{X, f} \geq \seq{A, p}$ be any total automorphism in $\C^*$, which exists by Observation~\ref{obs:totalization_exists}, and let $\seq{\hat{A}, \hat{p}} \leq \seq{X, f}$ be generated by the finite union of orbits of $f$ intersecting $A$.
    We show that $\seq{\hat{A}, \hat{p}}$ is the unique totalization of $\seq{A, p}$.

    There is a countable enumeration $(a_n)$ of $\hat{A}$ (finite or infinite) such that $A = \set{a_n: n \leq n_0}$ for some $n_0$ and such that for every $n \geq n_0$ we have $a_{n + 1} = \hat{p}^{\eps_n}(x_n)$ for some $\eps_n \in \set{-1, 1}$ and $x_n \in A_n$ where $A_n \leq \hat{A}$ is the substructure generated by $\set{a_i: i \leq n}$.
    If we put $p_n = \hat{p}\restr{A_n}$, we have that for every $n$, either $\seq{A_{n + 1}, p_{n + 1} }\geq \seq{A_n, p_n}$ is a forward/backward one-point orbit extension at $x_n$, or $\seq{A_{n + 1}, p_{n + 1}} = \seq{A_n, p_n}$.
    By passing to a subsequence if needed, we may assume that we always have one-point extensions.
    
    By Condition~(2), $\seq{A_n, p_n} \in \Su$ for every $n \geq n_0$, and so by Condition~(1), $\seq{A_{n + 1}, p_{n + 1}}$ is the unique forward/backward one-point orbit extension at $x_n$.
    Hence, by Lemma~\ref{thm:pointwise_determined}, $\seq{\hat{A}, \hat{p}}$ is the unique totalization of $\seq{A, p}$.
    If we additionally have Condition~(3), then clearly determined partial automorphisms are cofinal in $\C^*_\fin$.
\end{proof}

\begin{strategy} \label{strategy}
Now we are ready to describe our strategy for proving CAP and simultaneously characterizing amalgamation bases in $\C^*_\fin$ for a nice category of structures~$\C$.
\begin{enumerate}
    \item We show that the full subcategory of $\C^*$ consisting of total automorphisms has AP, so that we can apply Theorem~\ref{thm:amalgamation_from_totalization}.
    \item We pick a full subcategory $\Su \subseteq \C^*_\fin$ of so-called sufficient partial automorphisms satisfying the assumptions of Proposition~\ref{prop: suffimpldeterm}.
    \item Then by Proposition~\ref{prop: suffimpldeterm} and Theorem~\ref{thm:amalgamation_from_totalization} it follows that $\C^*_\fin$ has CAP, the amalgamation bases are exactly the determined partial automorphisms, and $\Su$ forms a cofinal family of those.
    \item If we also show that every amalgamation base / determined partial automorphism is necessarily in $\Su$, then $\Su$ forms exactly the class of amalgamation bases / determined partial automorphisms.
\end{enumerate}
\end{strategy}

The following notion often gives a necessary condition for being an amalgamation base, and in our application it even turns out to be sufficient.

\begin{df}
For a category of structures $\C$ and a partial automorphism $\seq{A, p} \in \C^*$ we say that its orbit $p^\Z(a)$ is \emph{stable} if it does not merge with another orbit of $p$ and does not become closed in any extension:
for every $\seq{B, q} \geq \seq{A, p}$ in $\C^*$ we have $q^\Z(a) \cap A = p^\Z(a)$, and if $q^\Z(a)$ is closed, then already $p^\Z(a)$ is closed.
\end{df}

\begin{prop} \label{thm:stable_necessary}
    Let $\C$ be a nice category of structures and suppose that $\C_\fin$ has the strong amalgamation property.
    If $\seq{A, p} \in \C^*_\fin$ is an amalgamation base, then every orbit of $\seq{A, p}$ is stable.
\end{prop}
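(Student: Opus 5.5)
We argue by contradiction: we suppose $\seq{A, p}$ is an amalgamation base and some orbit $p^\Z(a)$ is not stable. Then there is an extension $\seq{B, q} \geq \seq{A, p}$ in $\C^*$ witnessing this. Either $q^\Z(a) \cap A \supsetneq p^\Z(a)$, so that two distinct $p$-orbits merge, or $q^\Z(a)$ is closed while $p^\Z(a)$ is not. We may take $B$ finite. Indeed, only finitely many points of the $q$-orbit are involved: a path $q^k(a) = a'$ with $a' \in A \setminus p^\Z(a)$, or a cycle $q^k(a) = a$. Restricting $\seq{B, q}$ to the substructure generated by $A$ and these finitely many orbit points gives an object of $\C^*_\fin$, because $\C$ is hereditary and locally finite. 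The restriction still witnesses the failure, since the relevant applications of $q$ remain defined.

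The plan is then to build a second extension $\seq{C, r} \geq \seq{A, p}$ in $\C^*_\fin$ in which the orbit of $a$ is, informally, ``continued freshly''. We take two copies of $\seq{B, q}$ over $A$ and amalgamate them in $\C_\fin$ using the strong amalgamation property. We use this to produce an extension in which the segment $a, q(a), \dots$ beyond $A$ consists of new points disjoint from $A$. In detail, let $\seq{X, f}$ be any total extension of $\seq{A, p}$ (Observation~\ref{obs:totalization_exists}); it is cleanest to work inside the Fraïssé limit $U$ with $f \in \Aut(U)$. Since $\C_\fin$ has SAP, the limit has no algebraicity: for any finite $F \leq U$, the pointwise stabilizer of $F$ has infinite orbits on every point outside $F$. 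Hence we may choose a finite $F \supseteq A$ and an automorphism $g$ fixing $A$ pointwise such that $g(B)$ meets $A$ only in $A$-points that are forced. Doing this directly on the orbit points is the delicate part.

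The cleaner route is the following. By SAP we amalgamate $B$ with a copy of itself over $A$ so that the two copies intersect exactly in $A$. Then the distinguished orbit points $q^k(a) \notin A$ from the two copies are distinct. If $\seq{A,p}$ were an amalgamation base, we would get $\seq{D, s}$ and embeddings $\beta_1, \beta_2$ of two extensions agreeing on $A$. To force a contradiction we need two extensions that genuinely disagree about the orbit of $a$, namely $\seq{B, q}$ and an extension $\seq{B', q'}$ in which the orbit of $a$ stays disjoint from $A \setminus p^\Z(a)$ and is not closed (for the relevant length $k$). Then in the amalgam, $s^k(\beta(a))$ must equal both $\beta(a')$ (or $\beta(a)$ in the closing case) and $\beta_2(q'^k(a))$. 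The latter lies outside the image of $A$, and this contradicts injectivity of $\beta_2$.

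So the key step, and the main obstacle, is constructing $\seq{B', q'}$: a finite extension in which $q'^j(a)$, $1 \le j \le k$, are all new points outside $A$. We construct it as follows. Take a total extension $\seq{U, f}$ of $\seq{A, p}$. Let $x$ be the last point of the orbit of $a$ in $A$; there $f(x)$ is a one-point type over $A$. Using SAP, equivalently no algebraicity in $U$, we choose $h \in \Aut(U)$ fixing $A \cup p^{-1}$-images pointwise — precisely fixing $\dom(p)$, $\rng(p)$, and $A$ — with $h(f(x)) \notin A$. We then replace $f$ by $h f h^{-1}$ appropriately, or more simply realize the type of $f(x)$ over $A$ by a point outside $A$. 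We iterate this one step at a time, producing a forward extension by $k$ fresh points. Each step uses SAP to amalgamate the one-point extension with a copy over the current finite structure so that the new point avoids $A$. This yields $\seq{B', q'}$ finite, and the contradiction argument above finishes the proof.
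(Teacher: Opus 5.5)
\textbf{There is a genuine gap, in the construction of the second extension.}

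Your overall frame matches the paper's. You produce a second finite extension $\seq{B',q'}$ of $\seq{A,p}$ in which the relevant iterate of $a$ lands outside $A$. In any amalgam you then get $\beta_2(a')=\beta_1(a')=\beta_1(q^{k}(a))=\beta_2((q')^{k}(a))$, which contradicts injectivity of $\beta_2$. Your reduction to a finite $B$ is also fine. The problem is the construction of $\seq{B',q'}$, which is the entire content of the proof.

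You ask for $h\in\Aut(U)$ fixing $A$ pointwise with $h(f(x))\notin A$, or for a realization outside $A$ of the type of $f(x)$ over $A$. At a step where the next point of the total orbit already lies outside $A$, there is nothing to do. At a step where it lies in $A$ (e.g.\ $f(x)$ is the merging or closing point $a'$), its type over $A$ is algebraic and every $h$ fixing $A$ fixes it. This is exactly the situation you must repair, and non-algebraicity only moves points outside the fixed set.

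The same objection applies to your iterated version: amalgamating over a finite structure containing $A$ cannot separate a point of $A$ from itself. Likewise, the strong amalgam of two copies of $B$ over $A$ keeps $a'$ identified in both copies, so it yields no disagreement.

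\textbf{The missing idea.} A continuation of the orbit only has to be compatible with $p$. So the new image only needs to realize the right type over the \emph{range side}, not over all of $A$. Since the fixed set then no longer contains $\dom(p)$, you must post-compose rather than conjugate; $hfh^{-1}$ would no longer extend $p$. Concretely, as in the paper:
\begin{enumerate}
\item Take a minimal witness $a\mapsto t_1\mapsto\cdots\mapsto t_n\mapsto a'$ in $q$ with $a\in A\setminus\dom(p)$, $a'\in A\setminus\rng(p)$ and all $t_i\notin A$. Then only the last point must be changed, and no iteration is needed.
\item Shrink $B$ to the substructure generated by $A\cup\{t_1,\dots,t_n\}$, and let $R$ be the substructure generated by $\rng(p)\cup\{t_1,\dots,t_n\}$.
\item A strong amalgam of $B$ and $\rng(q)$ over $R$, realized in $U$, gives $x\notin B$ and an isomorphism $\theta\colon\rng(q)\to Q$ extending $\id{R}\cup\{a'\mapsto x\}$.
\item Set $q'=\theta\circ q$. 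It extends $p$ because $\theta$ fixes $\rng(p)$, and $(q')^{n+1}(a)=x\notin A$.
\end{enumerate}
Without shrinking the base over which freshness is achieved from $A$ to $R$, your argument does not produce the required $\seq{B',q'}$.
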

\begin{proof}
    Suppose that some extension $\seq{B, q} \geq \seq{A, p}$ in $\C^*_\fin$ merges or closes some orbits of $\seq{A, p}$, i.e. there is a sequence $a \mapsto t_1 \mapsto \cdots \mapsto t_n \mapsto a'$ in $q$ such that $a \in A \setminus \dom(p)$, $a' \in A \setminus \rng(p)$, and $t_i \in B \setminus A$.
    By passing to a minimal such subsequence we can assume that $t_i \notin A$ for every $i = 1, \ldots, n$.
    (The case $n = 0$ also captures the situation of $a = a'$ becoming a fixed point.)
    Since $\C$ is nice, we can pass to substructures, so without loss of generality, $\seq{B, q}$ is generated by $\seq{A \cup \{t_1, \ldots, t_n\}, p \cup \{a \mapsto t_1 \mapsto \cdots \mapsto t_n \mapsto a'\}}$, i.e. $B$ is generated by $A \cup \{t_1, \ldots, t_n\}$, and $\dom(q)$ is generated by $\dom(p) \cup \{a, t_1, \ldots, t_n\}$ (and so  $\rng(q)$ is generated by $\rng(p) \cup \{t_1, \ldots, t_n, a'\}$).

    We will construct another extension $\seq{B', q'} \geq \seq{A, p}$ not amalgamable with $\seq{B, q}$.
    Let $R \leq B$ be the substructure generated by $\rng(p) \cup \{t_1, \ldots, t_n\}$, so $\rng(q) \geq R$ is generated by $R \cup \{a'\}$.
    Without loss of generality $B\leq U$ where $U$ is the Fraïssé limit of $\C_\fin$.
    We want to find a point $x \in U \setminus B$ such that $\tp(x/R) = \tp(a'/R)$, by which we mean such that there is an isomorphism $f\maps\rng(q) \to Q$ extending $\id{R} \cup \{a' \mapsto x\}$, where $Q \leq U$ is the substructure generated by $R \cup \{x\}$.
    We take a strong amalgam $C \in \C_\fin$ of $B$ and $\rng(q)$ over $R$, view it as an extension of $B$, which we realize in $U$.
    Hence we obtain an embedding $e\maps\rng(q) \to U$ extending $\id{R}$ such that $x := e(a') \notin B$.
    
    Let $f\maps \rng(q) \to Q$ be the isomorphism witnessing $\tp(x/R) = \tp(a'/R)$, let $q'\maps \dom(q) \to Q$ be the composition $f \cmp q$, and let $B' \leq U$ be the substructure generated by $A \cup \dom(q') \cup \rng(q')$.
    Then $\seq{B', q'}$ is an extension of $\seq{A, p}$ in $\C^*_\fin$ that cannot be amalgamated with $\seq{B, q}$ over $\seq{A, p}$.
    This is because $s^{n + 1}(a) =a' \in A$ for every extension $s \supseteq q$, while $(s')^{n+1}(a)=x \not \in A$ for every extension $s' \supseteq q'$.
\end{proof}

We shall illustrate Strategy~\ref{strategy} on the example of linear orders.
Then we proceed to our main application – ultrametric spaces with partial dc-automorphisms.

\begin{prop}\label{lo_totaliz}
    $(\Lin^0)^*_\fin$  has CAP and for $\seq{A,p}\in (\Lin^0)^*_\fin$ the following conditions are equivalent:
    \begin{enumerate}[label={\rm(\alph*)}]
        \item $\seq{A,p}$ is an amalgamation base;
        \item $\seq{A,p}$ is determined;  
        \item every orbit of $\seq{A,p}$ is stable.
    \end{enumerate}
\end{prop}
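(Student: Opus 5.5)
Following Strategy~\ref{strategy}, I first check that $\Lin^0$ is nice: it is hereditary, $\sigma$-complete, relational (so locally finite), and $\Lin^0_\fin$ is Fraïssé with strong amalgamation. By Lemma~\ref{thm:amalgamation_base_determined} and Proposition~\ref{thm:stable_necessary}, (a) implies (b) and (a) implies (c). Step~(1) of the strategy is amalgamation of total automorphisms. Given $\seq{Z_0,h_0}\leq \seq{Z_1,h_1},\seq{Z_2,h_2}$ with total automorphisms, I take the disjoint union of $Z_1,Z_2$ over $Z_0$ and need a linear order on it making $h_1\cup h_2$ order-preserving. For $x\in Z_1\setminus Z_0$ and $y\in Z_2\setminus Z_0$, if some $z\in Z_0$ separates them (in either order), the order is forced. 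Otherwise both lie in the same ``gap'' (cut) of $Z_0$. The cuts of $Z_0$ are permuted by $h_0$, and inside a single $h_0$-orbit of cuts I order $Z_1$-points before $Z_2$-points uniformly. Because $h_i$ map points in a cut $c$ to points in the cut $h_0(c)$, the uniform rule is preserved, and transitivity holds because the cuts are convex. Together with Theorem~\ref{thm:amalgamation_from_totalization}, this gives CAP once determined maps are cofinal, and it gives (a)$\Leftrightarrow$(b).

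Step~(2): let $\Su$ consist of the $\seq{A,p}$ all of whose orbits are stable, so that (c) holds. I first describe stability concretely for linear orders. An orbit $a_0<$ or $>$ chain $a_0\mapsto a_1\mapsto\dots\mapsto a_n$ with $n\geq1$ is monotone, since $a_0<a_1$ forces $a_i<a_{i+1}$. The only other possibility is a trivial one-point orbit, which can always be closed into a fixed point, except $0$, which is already fixed. So a trivial orbit $\{a\}$ with $a\ne0$ is unstable. A nontrivial increasing orbit from $a_0$ to $a_n$ can be merged with another orbit starting at $b$ exactly when one can extend $a_n\mapsto\dots\mapsto b$ consistently. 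I will show this is possible iff no point of $A$ obstructs it; the obstruction is an element $c\in A$ with $a_n\le c<\dots$ whose image constraints force $p$-values to exceed $b$. I plan to characterize stable increasing orbits as those with some point of $\dom(p)$ or $\rng(p)$ lying in $[a_{n-1},a_n)$ in a way that pins down the one-point extension. A cleaner route to the same end: an orbit is stable iff the forward one-point extension at its endpoint is unique. Concretely, the position of $p(a_n)$ relative to $A$ must be determined by order-preservation from some $c\in\dom(p)$ with $a_{n-1}<c<a_n$ (giving $p(a_n)>p(c)\ge a_n$), and ambiguity with the points of $A$ above $a_n$ is then excluded by stability. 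Properties~(1) and~(2) of Proposition~\ref{prop: suffimpldeterm} follow. For (1), if two forward one-point extensions at $x$ placed the new point in different cuts of $A$, then between them lies some $c\in A$, and extending one of them further would merge or close the orbit with the orbit of $c$ (or hit $c$), contradicting stability. For (2), a unique extension adds a point whose position is forced, so it cannot create any new merging or closing possibilities.

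Step~(3), cofinality of $\Su$: given $\seq{A,p}$, I embed it in the Fraïssé limit $\Q$ with $0$ added, extend it to a total automorphism $f$, and restrict $f$ to a long enough finite window of each orbit. Points in trivial orbits that are fixed by $f$ become closed orbits. For a nontrivial orbit of $f$, I extend it far enough in both directions that its endpoints are separated from all other relevant points by intermediate orbit points. Then merging is impossible, since other orbits lie in different $f$-invariant intervals or interleave in a forced way, and monotone orbits never close. Finally (c)$\Rightarrow$(b) is just Proposition~\ref{prop: suffimpldeterm} applied to $\Su$.

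The main obstacle is Step~(2): proving that stability alone yields uniqueness of one-point extensions, that is, that every ambiguity in placing $p(x)$ can be turned into an actual merging or closing extension. I would attempt this by taking two incompatible positions, choosing a separating point $c\in A$, and explicitly building, via the amalgamation of Step~(1), an extension in which the orbit of $x$ passes through $c$'s orbit.
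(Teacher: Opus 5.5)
Your architecture matches the paper's. You use Strategy~\ref{strategy} with $\Su$ the partial automorphisms all of whose orbits are stable, and you amalgamate total automorphisms by putting $Z_1$-points before $Z_2$-points inside a common cut of $Z_0$, which is Lemma~\ref{thm:order_auto_amalagamation}. You also get (a)$\Rightarrow$(c) from Proposition~\ref{thm:stable_necessary}.

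The genuine gap is Step~(3). Restricting an arbitrary total extension $f$ to long finite windows of its orbits need not produce stable orbits. Moreover, two orbits lying in different $f$-invariant intervals can still be merged. Take $A=\set{0,a,a'}$ with $0<a<\sqrt2<a'$ and $p=\set{0\mapsto 0}$. Let $f\in\Aut(\Qpos)$ fix $0$ and act on $(0,\sqrt2)\cap\Q$ and on $(\sqrt2,\infty)\cap\Q$ as copies of the translation $t\mapsto t+1$ of $\Q$. Choose any windows $B=\set{0}\cup\set{f^k(a):\card{k}\le K}\cup\set{f^k(a'):\card{k}\le K'}$. Then $f\restr{B}\cup\set{f^K(a)\mapsto f^{-K'}(a')}$ is already an order-preserving partial map of $B$. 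So the two orbits merge inside $B$ itself, however large $K$ and $K'$ are, and the endpoints are never ``separated by intermediate orbit points''. Even a rational fixed point of $f$ between the two intervals would not help unless you add it to $B$. So cofinality of $\Su$, and hence CAP, is not established.

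The paper's argument needs no choice of $f$. If some orbit of $\seq{A,p}$ can be closed or merged, pass to an extension that does so. Then restrict it to the union of the orbits meeting $A$, which is allowed since $\Lin^0$ is hereditary, so no new orbits appear. Each step strictly decreases the number of orbits plus the number of non-closed orbits, so after finitely many steps all orbits are stable.

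On the other hand, what you call the main obstacle is immediate and needs no amalgamation. The forward one-point extensions at $x$ are exactly $p\cup\set{x\mapsto y}$ with $p\im{A_0}<y<p\im{A_1}$, where $A_0$ and $A_1$ consist of the elements of $\dom(p)$ below and above $x$. This set of admissible positions is convex and automatically avoids $\rng(p)$. Suppose an admissible position is a point of $A$, or two admissible new points lie in different cuts of $A$; in the latter case a separating point of $A$ is admissible by convexity. Either way there is $c\in A\setminus\rng(p)$ such that $p\cup\set{x\mapsto c}$ is already a partial automorphism of $A$ closing or merging the orbit of $x$, contradicting stability. Your ``(or hit $c$)'' is exactly this once convexity is noted.

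Finally, drop the ``concrete description'' of stability in Step~(2): it is never used and contains false claims. For $A=\set{0,b,a,c}$ with $0<b<a<c$ and $p=\set{0\mapsto 0,\ b\mapsto c}$, the trivial orbit $\set{a}$ is stable. Indeed $q(a)>q(b)=c>a$ whenever $a\in\dom(q)$, and $a$ lies strictly between the consecutive orbit points $b$ and $c$, so it can never join their orbit. Also, uniqueness of the forward extension at the last point of an orbit does not imply stability, since merging can occur at its first point.
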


\begin{proof}
We follow the strategy above.
First, by Lemma~\ref{thm:order_auto_amalagamation}, which we will prove later, total automorphisms have AP.
Second, we let class $\Su \subseteq (\Lin^0)^*_\fin$ of sufficient partial automorphisms consist exactly of those with all orbits stable.
We need to check the assumptions (1), (2), (3) of Proposition~\ref{prop: suffimpldeterm}.

Clearly, (2) is satisfied since extending a stable orbit cannot make it unstable.
Also we have (3) since if some orbit(s) of a given partial automorphism $\seq{A, p}$ can be closed or merged in an extension $\seq{B, q} \geq \seq{A, p}$, we can pass to that extension, which without loss of generality does not add any new orbits (we can always remove new orbits).
After finitely many steps all orbits become stable.

Now we show (1), the uniqueness of one-point extensions, i.e. for every $\seq{A, p} \in \Su$ and $x \in A \setminus \dom(p)$ there is up to isomorphism a unique forward one-point orbit extension of $\seq{A, p}$ at $x$ in $(\Lin^0)^*_\fin$.
(It is enough to consider only forward one-point orbit extensions as $\seq{A, p} \in \Su$ if and only if $\seq{A, p^{-1}} \in \Su$.)
So let $x \in A \setminus \dom(p)$.
Forward one-point orbit extensions are exactly of the form $\seq{A \cup \{y\}, p \cup \{x \mapsto y\}}$ for $y \notin \rng(p)$ and $p[A_0] < y< p[A_1]$, where $A_0 = \{a \in \dom(p): a < x\}$ and $A_1 = \{a \in \dom(p): x < a\}$.
Note that the sets $A_i$ may be empty and the point $y$ may or may not be in $A$.
However, if there is any $y \in A$ such that $p[A_0] < y < p[A_1]$, then the corresponding extension would close or merge orbits, which would contradict the assumption.
Hence, the interval between $p[A_0]$ and $p[A_1]$ is empty in $A$ and can be filled with a new point in a unique way up to isomorphism.

Since the assumptions (1), (2), (3) are satisfied, by Proposition~\ref{prop: suffimpldeterm} and Theorem~\ref{thm:amalgamation_from_totalization} we have that $(\Lin^0)^*_\fin$ has CAP and we have (a)~$\Leftrightarrow$~(b)~$\Leftarrow$~(c) in the present proposition.
The implication (a)~$\Rightarrow$~(c) follows from Proposition~\ref{thm:stable_necessary}.
\end{proof}

\section{A generic dc-automorphism}

We apply Strategy~\ref{strategy} to characterize amalgamation bases and show CAP of $\Ult^*_\fin$ (Theorem~\ref{spaces_totaliz}), and consequently to prove the existence of a generic dc-automorphism of $\U$ (Corollary~\ref{cor:generic_auto}).

\subsection{Total dc-automorphisms have amalgamation}

We shall prove that the full subcategory of $\Ult^*$ consisting of total automorphisms has AP.
This is the main assumption of Theorem~\ref{thm:amalgamation_from_totalization}.

The first step is to give a certain ``free-like'' amalgamation of (even infinite) ultrametric spaces – a strong amalgamation such that distances between points from the extensions are as large as possible.
This is essentially an instance of so-called \emph{independent amalgamation} (see \cite[Section~3]{Solecki05}, \cite[Section~2]{DouchaMalicki19}, \cite[Definition~2.3]{Sabok19}, and Remark~\ref{rmk:independent_amalgamation}), which is well-suited to independently amalgamate new orbits added by the respective extensions of automorphisms.

\medskip

Recall that $\UltIso^E \subs \Ult$ for $E \in \Lin^0$ denotes the subcategory of all ultrametric spaces $X$ with $D_X \leq E$ and of all isometric embeddings.

\begin{lm} \label{thm:complete_isometric_amalgamation}
    Let $E$ be a complete linear order in $\Lin^0$ (i.e. every subset has the infimum and the supremum) where $0$ is isolated, i.e. $\min(E \setminus \set{0})$ exists.
    Then $\UltIso^E$ has AP.
    More precisely, for all spaces $X \leq Y_i$ in $\UltIso^E$, $i = 1, 2$, with $Y_1 \cap Y_2 = X$, the following ultrametric on $Z = Y_1 \cup Y_2$ with $D_Z = E$ extends $d_{Y_1} \cup d_{Y_1}$:
    \[
        d_Z(x, y) = \inf_{a \in Y_i \cap Y_j} \max\set{d_{Y_i}(x, a), d_{Y_j}(a, y)}
    \]
    for every $x \in Y_i$ and $y \in Y_j$.
\end{lm}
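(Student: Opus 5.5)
We will verify directly that the displayed formula defines an ultrametric on $Z = Y_1 \cup Y_2$ which restricts to $d_{Y_i}$ on each $Y_i$. The amalgamation property for $\UltIso^E$ then follows: $Z$ with $D_Z = E$ together with the inclusions $Y_i \leq Z$ is an amalgam in $\UltIso^E$. Here we may assume, after renaming, that $Y_1 \cap Y_2 = X$. Completeness of $E$ guarantees that the infimum exists in $E$.

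\emph{Reading the formula and restriction.} For $x, y$ in the same $Y_i$ we read the formula with $i = j$, so the infimum ranges over $a \in Y_i$. Taking $a = x$ gives $d_Z(x,y) \leq d_{Y_i}(x,y)$. For the reverse inequality, the ultrametric inequality in $Y_i$ gives $\max\set{d(x,a), d(a,y)} \geq d(x,y)$ for every $a$, so $d_Z = d_{Y_i}$ on $Y_i$. For $x \in Y_1 \setminus X$ and $y \in Y_2 \setminus X$ the infimum ranges over $a \in X$. If $X = \emptyset$ the infimum is $\sup E$; we will note that this choice still works, since every mixed distance is then the maximum.

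\emph{Nondegeneracy and symmetry.} Symmetry is clear from the formula. Nondegeneracy is the only place where the isolation of $0$ is needed. For $x \in Y_1 \setminus X$ and $a \in X$ we have $d(x,a) \geq \eps := \min(E \setminus \set{0})$. Hence each term of the infimum is at least $\eps > 0$, so $d_Z(x,y) \geq \eps$. Without isolation of $0$, an infimum of positive values could be $0$; this is the main subtlety.

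\emph{Ultrametric triangle inequality.} We must show $d_Z(x,z) \leq \max\set{d_Z(x,y), d_Z(y,z)}$ and will split into cases by the location of the three points. If all three lie in one $Y_i$, this is immediate from the restriction property. The cases reduce to two essential ones; by symmetry, we take $x, z \in Y_1$.

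\begin{itemize}
\item \emph{Case $x \in Y_1$ and $y, z \in Y_2 \setminus X$ (mixed pair $x,z$).} For any $a \in X$ we have $d_{Y_2}(a,z) \leq \max\set{d(a,y), d(y,z)}$. Therefore
\[
\max\set{d(x,a), d(a,z)} \leq \max\set{\max\set{d(x,a), d(a,y)}, d(y,z)}.
\]
Taking the infimum over $a$ on both sides, and using $\inf_a \max\set{u_a, c} = \max\set{\inf_a u_a, c}$ in a complete linear order, we obtain the required inequality.
\item \emph{Case $x, z \in Y_1$ and $y \in Y_2 \setminus X$.} For $a, b \in X$ we have
\[
d(x,z) \leq \max\set{d(x,a), d(a,b), d(b,z)}
\]
and $d(a,b) \leq \max\set{d(a,y), d(y,b)}$ in $Y_2$. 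Hence $d(x,z) \leq \max\set{M_a, M'_b}$, where $M_a = \max\set{d(x,a), d(a,y)}$ and $M'_b = \max\set{d(y,b), d(b,z)}$. Taking the infimum separately over $a$ and over $b$ gives the claim. Again this uses that $\max$ commutes with infima in a linear order.
\end{itemize}

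The remaining configurations are of the same type with the roles of $Y_1$ and $Y_2$ exchanged.

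In the degenerate case $X = \emptyset$, every mixed distance equals $\sup E$. This is a legitimate ultrametric amalgam provided $\sup E > 0$, i.e. $E \neq \set{0}$. In the trivial case $E = \set{0}$, all spaces have at most one point, and amalgamation over the empty space is impossible unless we allow identification. We expect to remark that this edge case is either excluded or handled separately.
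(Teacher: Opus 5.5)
Your proposal is correct and is essentially the paper's proof. It uses the same two ingredients: $\max$ distributes over arbitrary infima in a complete chain, and the ultrametric inequality holds inside a single $Y_j$ at the middle point $y$. Your Case~2 matches the paper's subcase $i = k \neq j$ and your Case~1 its subcase $j = k$; you simply organize by explicit location cases rather than by indices $x \in Y_i$, $y \in Y_j$, $z \in Y_k$.

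Your closing worry about $E = \set{0}$ is moot. The hypothesis that $\min(E \setminus \set{0})$ exists excludes it and forces $\max E > 0$, so the case $X = \emptyset$ is already covered by your nondegeneracy argument.
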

\begin{proof}
    As $d_{Y_i}(x, y) = \inf_{a \in A} \max\set{d_{Y_i}(x, a), d_{Y_i}(a, y)}$ for every $x, y \in Y_i$ and $A \subseteq Y_i$ containing $x$ or $y$, the map $d_Z$ extends $d_{Y_i}$ and is well-defined (if $x \in X$, it does not matter whether we view it as a member of $Y_1$ or $Y_2$).
    Clearly, $d_Z$ is symmetric and $d_Z(x, x) = 0$.
    Since $0 \in E$ is isolated, if $d_Z(x, y) = 0$, then $\max\set{d_{Y_i}(x, a), d_{Y_j}(a, y)} = 0$ for some $a$, and so $x = a = y$.
    
    It remains to prove the ultrametric triangle inequality, i.e. for every $x \in Y_i$, $y \in Y_j$, and $z \in Y_k$ we want
 
    \begin{align*}
        \inf_{a \in Y_i \cap Y_k} \max\set{d_{Y_i}(x, a), d_{Y_k}(a, z)}
        \leq
        \max\bigl\{&\inf_{b \in Y_i \cap Y_j} \max\set{d_{Y_i}(x, b), d_{Y_j}(b, y)} 
        ,\\
        &\inf_{c \in Y_j \cap Y_k} \max\set{d_{Y_j}(y, c), d_{Y_k}(c, z)}\bigr\}.
    \end{align*}
    
    Since every complete linear order is completely distributive, we may rewrite the right-hand side as 
      \[
        \inf_{b \in Y_i \cap Y_j} \inf_{c \in Y_j \cap Y_k} \max\{d_{Y_i}(x, b) , d_{Y_j}(b, y) , d_{Y_j}(y, c) , d_{Y_k}(c, z)\}.
    \]
    
    Since $\max\{d_{Y_j}(b, y) , d_{Y_j}(y, c)\} \geq d_{Y_j}(b, c)$, it is enough to show
    
     \[
        \inf_{a \in Y_i \cap Y_k} \max\{d_{Y_i}(x, a) , d_{Y_k}(a, z)\}
        \leq
        \inf_{b \in Y_i \cap Y_j} \inf_{c \in Y_j \cap Y_k} \max\{d_{Y_i}(x, b) , d_{Y_j}(b, c) , d_{Y_k}(c, z)\}.
    \]
    
    If $i = j$, the right-hand side is greater than $\inf_{b \in Y_i,\, c \in Y_i \cap Y_k} \max\{d_{Y_i}(x, c) , d_{Y_k}(c, z)\}$, and we are done (if $Y_i = \emptyset$, the desired inequality is trivially true).
    If $j = k$, analogous argument works.
    Otherwise, we have $i \neq j \neq k = i$, so $Y_i \cap Y_j = Y_j \cap Y_k = X$, and the inequality simplifies to
     \[
        \inf_{a \in Y_i} \max\{d_{Y_i}(x, a) , d_{Y_i}(a, z)\}
        = d_{Y_i}(x, z) \leq
        \inf_{b, c \in X} \max\{d_{Y_i}(x, b) , d_{X}(b, c) , d_{Y_i}(c, z)\}.
        \qedhere
    \]
\end{proof}

Let $\Ult^{*\seq{E, \phi}} \subseteq \Ult^*$ for $E \in \Lin^0$ and $\phi \in \aut(E)$ denote the subcategory whose objects $\seq{X, p}$ are such such that $\seq{D_X, D_p} \leq \seq{E, \phi}$ and whose morphisms are $\Ult^*$-maps $f\maps \seq{X, p} \to \seq{Y, q}$ that are isometric embeddings, i.e. $D_f \subseteq \id{E}$.

\begin{prop} \label{thm:iso_auto_amalgamation}
    Let $E \in \Lin^0$ be a complete linear order where $0$ is isolated, and let $\phi \in \aut(E)$.
    Then the full subcategory of $\Ult^{*\seq{E, \phi}}$ consisting of total automorphisms has AP.
\end{prop}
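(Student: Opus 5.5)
The plan is to take the explicit ultrametric $d_Z$ from Lemma~\ref{thm:complete_isometric_amalgamation} and show that it is invariant under the amalgamated automorphism. Concretely, let $\seq{X, f} \leq \seq{Y_i, g_i}$ for $i = 1, 2$ be total automorphisms in $\Ult^{*\seq{E, \phi}}$, with all maps isometric. Up to isomorphism we may assume $Y_1 \cap Y_2 = X$ and $g_1\restr{X} = g_2\restr{X} = f$. Put $Z = Y_1 \cup Y_2$ and $D_Z = E$, and let $d_Z$ be given by the formula of Lemma~\ref{thm:complete_isometric_amalgamation}. That lemma already tells us that $\seq{Z, d_Z, E}$ is an ultrametric space in $\UltIso^E$ into which both $Y_i$ embed isometrically over $X$. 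So it remains to define $h = g_1 \cup g_2$ with $D_h = \phi$, and to check two things: that $h$ is a well-defined bijection, and that $h$ is a dc-automorphism. Once this is done, the inclusions $Y_i \to Z$ are isometric $\Ult^*$-maps $\seq{Y_i, g_i} \to \seq{Z, h}$ that agree on $X$, which is the required amalgamation.

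The map $h$ is well defined because the $g_i$ agree on $X = Y_1 \cap Y_2$. It is a bijection of $Z$ because each $g_i$ is a bijection of $Y_i$ mapping $X$ onto $X$, since $f$ is total on $X$. Hence each $g_i$ also maps $Y_i \setminus X$ onto itself, and these two sets are disjoint.

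For the dc-condition we need $d_Z(h(x), h(y)) = \phi(d_Z(x, y))$ for all $x \in Y_i$ and $y \in Y_j$. If $i = j$, this is just the fact that $g_i$ is a dc-automorphism with distance part $\phi\restr{D_{Y_i}}$, which holds because the objects sit below $\seq{E, \phi}$. If $i \neq j$, the intersection $Y_i \cap Y_j$ equals $X$, and we compute
\[
    d_Z(h(x), h(y)) = \inf_{a \in X} \max\set{d_{Y_i}(g_i(x), a), d_{Y_j}(a, g_j(y))}.
\]
Since $f$ permutes $X$, we may reindex by $a = f(b)$ with $b \in X$. Because $g_i(b) = f(b) = g_j(b)$, we get $d_{Y_i}(g_i(x), f(b)) = \phi(d_{Y_i}(x, b))$, and similarly for $Y_j$. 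The infimum therefore becomes
\[
    \inf_{b \in X} \max\set{\phi(d_{Y_i}(x, b)), \phi(d_{Y_j}(b, y))} = \inf_{b \in X} \phi\bigl(\max\set{d_{Y_i}(x, b), d_{Y_j}(b, y)}\bigr).
\]

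The only point needing care, and where I expect the main obstacle, is pulling $\phi$ out of the infimum. This works because $\phi$ is an order-automorphism of the complete linear order $E$, so it preserves all infima and suprema. Hence the last expression equals $\phi(d_Z(x, y))$, as required. Finally, $h$ is total, and $D_Z = E$ with $D_h = \phi$, so $\seq{D_Z, D_h} \leq \seq{E, \phi}$ holds trivially. This verifies that $\seq{Z, h}$ is an object of the category and completes the amalgamation.
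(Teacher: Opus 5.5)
Your proposal is correct and is essentially the paper's proof: you take the free-like amalgam from Lemma~\ref{thm:complete_isometric_amalgamation}, set $h = g_1 \cup g_2$ with $D_h = \phi$, and verify the dc-condition by reindexing the infimum through the automorphisms and commuting $\phi$ with $\max$ and $\inf$. The only cosmetic difference is that the paper does this computation uniformly for all $i, j$, reindexing over $\rng(g_i \cap g_j) = Y_i \cap Y_j$, whereas you treat the case $i = j$ separately using that $d_Z$ extends $d_{Y_i}$.
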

\begin{proof}
    Let $\seq{X, f} \leq \seq{Y_i, g_i}$ for $i = 1, 2$ be ultrametric spaces with automorphisms such that $D_X \leq D_{Y_i} \leq E$ and $D_f \subseteq D_{g_i} \subseteq \phi$.
    Without loss of generality $Y_1 \cap Y_2 = X$.
    Let $Z = \seq{Y_1 \cup Y_2, d_Z, E}$ be the free-like amalgamation obtained from Lemma~\ref{thm:complete_isometric_amalgamation}.
    As $g_i \in \aut(Y_i)$ and $f = g_1 \cap g_2 \in \aut(X)$, the map $h = g_1 \cup g_2$ is a well-defined bijection on $Z$.

    It remains to show that $\seq{h, \phi} \in \aut(Z)$.
    For every $x \in Y_i$ and $y \in Y_j$ we have
    \begin{align*}
        \phi(d_Z(x, y)) &= \phi\bigl(\inf_{a \in Y_i \cap Y_j} \max\set{d_{Y_i}(x, a),\, d_{Y_j}(a, y)}\bigr) \\
        &= \inf_{a \in Y_i \cap Y_j} \max\set{\phi(d_{Y_i}(x, a)),\, \phi(d_{Y_j}(a, y))} \\
        &= \inf_{a \in Y_i \cap Y_j} \max\set{d_{Y_i}(g_i(x), g_i(a)),\, d_{Y_j}(g_j(a), g_j(y))} \\
        &= \inf_{a \in \rng(g_i \cap g_j) = Y_i \cap Y_j} \max\set{d_{Y_i}(h(x), a),\, d_{Y_j}(a, h(y))} 
        = d_Z(h(x), h(y)).
        \qedhere
    \end{align*}
\end{proof}

\begin{uwgi} \label{rmk:independent_amalgamation}
    As seen in the proof of the previous proposition, the amalgamation from Lemma~\ref{thm:complete_isometric_amalgamation} is independent in the sense that it is such an amalgamation $Y_1 \to Z \from Y_2$ of a span $Y_1 \from X \to Y_2$ in $\UltIso^E$ that every lifting $\seq{Y_1, g_1} \from \seq{X, f} \to \seq {Y_2, g_2}$ with respect to the forgetful functor $\Ult^{*\seq{E, \phi}} \to \UltIso^E$ admits an amalgamating lifting $\seq{Y_1, g_1} \to \seq{Z, h} \from \seq{Y_2, g_2}$.
    This can be compared to the notion of \emph{amalgamation property with automorphisms (APA)} coined by Siniora~\cite{Siniora} and originating from Hodges–Hodkinson–Lascar–Shelah~\cite{HHLS}.
\end{uwgi}

To elevate the previous proposition from $\Ult^{*\seq{E, \phi}}$ to $\Ult^*$, we will need the following.

\begin{observation} \label{obs:liftings}
    For every $\seq{X, D_X} \in \Ult$ and every $\Lin^0$-map $f\maps D_X \to E$ there is the canonical lifting: the dc-embedding $\bar{f}\maps \seq{X, D_X} \to \seq{X, E}$ with $D_{\bar{f}} = f$, i.e. we put $d_{\seq{X, E}}(x, y) = f(d_X(x, y))$ and we let $\bar{f}\maps X \to X$ be the identity.
    
    The canonical liftings are compositional in the sense that for every dc-embedding $e\maps X \to Y$ and $\Lin^0$-maps $f\maps D_X \to E$ and $g\maps D_Y \to E$ such that $f = g \cmp D_e$ we have $e \cmp \bar{f} = \bar{g} \cmp e$ where $e$ is also viewed as an isometric embedding $\seq{X, E} \to \seq{Y, E}$.
    
    Analogously, we have canonical liftings for partial automorphisms, i.e. for every $\seq{X, p, D_X, D_p} \in \Ult^*$ and every $(\Lin^0)^*$-map $f\maps \seq{D_X, D_p} \to \seq{E, \phi}$ we have the lifting $\seq{X, p, D_X, D_p} \to \seq{X, p, E, \phi}$.
\end{observation}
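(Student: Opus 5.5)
The statement has three parts: that the canonical lifting is a dc-embedding, that canonical liftings are compositional, and that the same works for partial automorphisms. Each part is a direct verification from the definitions, carried out in the following order.

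\emph{The lifting is well defined.} First we check that $d_{\seq{X, E}} := f \cmp d_X$ is an ultrametric on $X$ with values in $E$. Symmetry is inherited from $d_X$. Since $f$ is an $\Lin^0$-map, it is injective and $f(0) = 0$, so $f(d_X(x,y)) = 0$ if and only if $d_X(x,y) = 0$, i.e.\ if and only if $x = y$. Since $f$ is an order embedding, it commutes with $\max$, so applying $f$ to $d_X(x,x'') \leq \max\set{d_X(x,x'), d_X(x',x'')}$ gives the ultrametric triangle inequality for $f \cmp d_X$. Then $\bar f = \id{X}$ together with $D_{\bar f} = f$ satisfies $d_{\seq{X,E}}(\bar f(x), \bar f(x')) = f(d_X(x,x'))$, which is literally the defining condition of a dc-embedding.

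\emph{Compositionality.} Let $e\maps X \to Y$ be a dc-embedding, and let $f\maps D_X \to E$ and $g\maps D_Y \to E$ be $\Lin^0$-maps with $f = g \cmp D_e$. We first confirm that $e$ is an isometric embedding $\seq{X,E} \to \seq{Y,E}$:
\[
    d_{\seq{Y,E}}(e(x), e(y)) = g\bigl(d_Y(e(x), e(y))\bigr) = g\bigl(D_e(d_X(x,y))\bigr) = f(d_X(x,y)) = d_{\seq{X,E}}(x,y).
\]
Both composites $e \cmp \bar f$ and $\bar g \cmp e$ have point part $e$. Their distance parts are $\id{E} \cmp f = f$ and $g \cmp D_e = f$, respectively, so the two composites are equal.

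\emph{Partial automorphisms.} A $(\Lin^0)^*$-map $f\maps \seq{D_X, D_p} \to \seq{E, \phi}$ means that $f \cmp D_p = \phi \cmp f$ on $\dom(D_p)$. We view $p$ as a partial automorphism of $\seq{X, E}$ as follows. Its domain is the substructure with points $\dom(p)$ and distances $f\im{\dom(D_p)}$. Its range is the substructure with points $\rng(p)$ and distances $f\im{\rng(D_p)}$. Its distance part is $\phi\restr{f\im{\dom(D_p)}}$. The required identity is
\[
    d_{\seq{X,E}}(p(x), p(y)) = f\bigl(D_p(d_X(x,y))\bigr) = \phi\bigl(f(d_X(x,y))\bigr) = \phi\bigl(d_{\seq{X,E}}(x,y)\bigr),
\]
so $p$ is a dc-isomorphism between these substructures. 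The map $\bar f$ intertwines the two copies of $p$, because $\bar f$ is the identity on points and $f$ conjugates $D_p$ to $\phi$. Hence $\bar f$ is a $\Ult^*$-map. Finally, $\seq{X, p, E, \phi}$ formally has distance sort $E$ with $\phi$ recorded as the ambient automorphism; this is the convention used for $\Ult^{*\seq{E,\phi}}$.

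The only step requiring care is this bookkeeping of the distance sorts of $\dom(p)$ and $\rng(p)$: they must be taken to be the images under $f$, so that $\phi$ restricts to an isomorphism between them. Everything else follows immediately from $f$ being an order embedding that fixes $0$.
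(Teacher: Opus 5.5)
Your verification is correct and is exactly the routine check the paper leaves implicit: the observation has no separate proof, and its content is the definition $d_{\seq{X,E}} = f \cmp d_X$ with $D_{\bar f} = f$. One correction to your last paragraph: the distance sorts of $\dom(p)$ and $\rng(p)$ need not be cut down to $f\im{\dom(D_p)}$ and $f\im{\rng(D_p)}$, since $f$ being a $(\Lin^0)^*$-map gives $f\im{\dom(D_p)} \subs \dom(\phi)$, so $\seq{\dom(p), \dom(\phi)}$ and $\seq{\rng(p), \rng(\phi)}$ are already substructures of $\seq{X, E}$ and your displayed computation shows that $p$ with the full distance part $\phi$ is a dc-isomorphism between them; this is literally $\seq{X, p, E, \phi}$, and it is what Corollary~\ref{thm:dc_auto_amalgamation} needs, since there lifts of total automorphisms must again be total automorphisms of $\seq{X, E}$, which fails for $\phi\restr{f\im{D_X}}$ when $f$ is not onto.
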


\begin{lm} \label{thm:order_auto_amalagamation}
    The full subcategory of $(\Lin^0)^*$ consisting of total automorphisms has AP.
\end{lm}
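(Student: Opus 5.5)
The plan is to amalgamate total automorphisms of linear orders by a canonical ``one side below the other'' amalgam that is invariant under the combined bijection, in the same spirit as the independent amalgamation of Proposition~\ref{thm:iso_auto_amalgamation}.

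Let $\seq{X, f} \leq \seq{Y_i, g_i}$, $i = 1, 2$, be total automorphisms in $(\Lin^0)^*$. Without loss of generality $Y_1 \cap Y_2 = X$. Put $Z = Y_1 \cup Y_2$ and $h = g_1 \cup g_2$. Since $g_1\restr{X} = f = g_2\restr{X}$ and each $g_i$ maps $Y_i \setminus X$ bijectively onto itself, $h$ is a well-defined bijection of $Z$.

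For $y \in Y_i \setminus X$ we consider its \emph{cut} $c(y) = \set{x \in X : x < y}$, a downward closed subset of $X$. The order on $Z$ is defined as follows.
\begin{itemize}
    \item On each $Y_i$ it is the original order.
    \item For $y_1 \in Y_1 \setminus X$ and $y_2 \in Y_2 \setminus X$ with $c(y_1) \neq c(y_2)$, there is some $x \in X$ lying strictly between them. We order $y_1, y_2$ accordingly, i.e.\ $y_1 < y_2$ iff $c(y_1) \subsetneq c(y_2)$.
    \item If $c(y_1) = c(y_2)$, we declare $y_1 < y_2$, so all new points of $Y_1$ in a given cut lie below all new points of $Y_2$ in that cut.
\end{itemize}
Equivalently, $Z$ is ordered lexicographically by the pair (cut, side, position within $Y_i$), with the elements of $X$ interleaved at their natural places. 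Written this way, linearity and transitivity are routine. The least element $0$ lies in $X$ and stays least, since no new point has an empty cut below $0$. The inclusions $Y_i \to Z$ are order embeddings by construction.

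The main point is that $h$ preserves this order. Because $g_i$ is an automorphism extending $f$, we have $c(g_i(y)) = f\im{c(y)}$ for every $y \in Y_i \setminus X$. Hence $h$ maps cuts to cuts via $f$, which is an order isomorphism of the lattice of cuts. The case distinctions above are then preserved:
\begin{itemize}
    \item if $c(y_1) \subsetneq c(y_2)$, then $f\im{c(y_1)} \subsetneq f\im{c(y_2)}$;
    \item if $c(y_1) = c(y_2)$, then the images have equal cuts and stay on the same sides.
\end{itemize}
Comparisons involving points of $X$ are handled by $f$ directly. Thus $h \in \aut(Z)$, and $\seq{Y_i, g_i} \to \seq{Z, h}$ are $(\Lin^0)^*$-maps agreeing on $\seq{X, f}$.

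The only step requiring care is verifying $c(g_i(y)) = f\im{c(y)}$ and the consistency of the rule ``$Y_1$ below $Y_2$'' under $h$. I expect this to be straightforward, so the overall obstacle is mild bookkeeping with cuts rather than any genuine difficulty.
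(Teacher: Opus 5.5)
Your proposal is correct and is essentially the paper's proof. The paper also takes $Z = Y_1 \cup Y_2$ with $h$ the union of the two automorphisms, and orders $y_1 \in Y_1 \setminus X$ against $y_2 \in Y_2 \setminus X$ by a separating element of $X$ when their cuts differ, placing the $Y_1$-point below the $Y_2$-point when the cuts coincide; it then checks that $h$ preserves this order via the identity $(\from, h(y))_X = p\im{(\from, y)_X}$, and the only cosmetic difference is that the paper first passes to $\Lin^*$ rather than keeping track of the least element $0$ as you do.
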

\begin{proof}
    We can equivalently work in $\Lin^*$.
    Let $f_i\maps \seq{X, p} \to \seq{Y_i, q_i}$ for $i = 1, 2$ be embeddings of automorphisms of linear orders.
    Without loss of generality, $f_1$ and $f_2$ are inclusions, and $Y_1 \cap Y_2 = X$.
    Let $Y'_i = Y_i \setminus X$ and let $q'_i=q_i\restriction Y'_i \in \aut(Y'_i)$ for $i = 1, 2$.
    We put $Z = X \cup Y_1 \cup Y_2$ and $h = p \cup q'_1 \cup q'_2$.
    Clearly $\seq{Z, h}$ is an amalgamation on the level of sets and automorphisms of sets.
    We need to extend the linear order to $Z$ in a way that $h$ is an automorphism.

    Let $y_i \in Y'_i$ for $i = 1, 2$.
    If there is $x \in X$ such that $y_1 <_{Y_1} x <_{Y_2} y_2$, we put $y_1 <_z y_2$.
    This is coherent with the definition of $h$ since then
    \[
        h(y_1) = q_1(y_1) < q_1(x) = p(x) = q_2(x) < q_2(y_2) = h(y_2).
    \]
    We proceed analogously if there is $x \in X$ such that $y_1 >_{Y_1} x >_{Y_2} y_2$.
    Finally, if $(\from, y_1)_X = (\from, y_2)_X$, we put $y_1 <_Z y_2$.
    Then $h(y_1) <_Z h(y_2)$ since $(\from, h(y_i))_X = p\im{(\from, y_i)_X}$.
\end{proof}

\begin{wn} \label{thm:dc_auto_amalgamation}
    The full subcategory of $\Ult^*$ consisting of total automorphisms has AP.
\end{wn}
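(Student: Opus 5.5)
We plan to reduce the statement to Proposition~\ref{thm:iso_auto_amalgamation} by first amalgamating the distance parts and then lifting canonically as in Observation~\ref{obs:liftings}. Let $f_i\maps \seq{X, f} \to \seq{Y_i, g_i}$, $i = 1, 2$, be $\Ult^*$-maps between total dc-automorphisms. Applying the functor $D^*$ yields maps $D_{f_i}\maps \seq{D_X, D_f} \to \seq{D_{Y_i}, D_{g_i}}$ between total automorphisms of linear orders with least element. By Lemma~\ref{thm:order_auto_amalagamation} these amalgamate: there is a total $\seq{L, \psi} \in (\Lin^0)^*$ and maps $e_i\maps \seq{D_{Y_i}, D_{g_i}} \to \seq{L, \psi}$ with $e_1 \cmp D_{f_1} = e_2 \cmp D_{f_2}$.

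Next we enlarge $L$ to a complete linear order with $0$ isolated, keeping an extension of $\psi$. We take the Dedekind--MacNeille completion $\bar L$ of $L$ (adding top, bottom-preserving, and fill gaps); $\psi$ extends uniquely to an order automorphism $\bar\psi$ of $\bar L$ since completion is functorial for isomorphisms. If $0$ is not isolated in $\bar L$, we first replace $L$ by $L' = \set{0} \cup \set{0'} \cup (L\setminus\set 0)$ with a new point $0'$ immediately above $0$ fixed by the automorphism; this is an $\Lin^0$-extension compatible with $\psi$, and after completion $0'$ remains the minimum of $E\setminus\set0$ (completion only adds suprema/infima of nonempty sets above $0'$, and the infimum of $L\setminus\set0$ is now $0'$). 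Call the result $\seq{E, \phi}$; it is a total automorphism in $(\Lin^0)^*$ receiving $\seq{L,\psi}$. This completion step is the main technical point to check, namely that $\phi$ is still an automorphism and $0$ is isolated, but both are routine.

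Now use the canonical liftings of Observation~\ref{obs:liftings}: composing with $\seq{L,\psi}\to\seq{E,\phi}$ we obtain $\Lin^0$-maps $\epsilon_X\maps D_X \to E$ and $\epsilon_i\maps D_{Y_i}\to E$ with $\epsilon_X = \epsilon_i \cmp D_{f_i}$, all intertwining the automorphisms with $\phi$. Relabel distances: $\bar X = \seq{X, f, E, \phi}$ and $\bar Y_i = \seq{Y_i, g_i, E, \phi}$ (distances pushed through $\epsilon$); these are total automorphisms in $\Ult^{*\seq{E,\phi}}$, since $\epsilon$ intertwines $D_{g_i}$ with $\phi$ so $g_i$ acts with distance part $\phi$. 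By compositionality, the point maps $f_i$ become isometric embeddings $\bar X \to \bar Y_i$ in $\Ult^{*\seq{E,\phi}}$ commuting with the automorphisms. Proposition~\ref{thm:iso_auto_amalgamation} gives a total $\seq{Z, h} \in \Ult^{*\seq{E,\phi}}$ with isometric embeddings $k_i\maps \bar Y_i \to Z$, $k_1 \cmp f_1 = k_2 \cmp f_2$ on points. Composing with the liftings $\seq{Y_i,g_i}\to \bar Y_i$ gives dc-embeddings $\seq{Y_i, g_i} \to \seq{Z, h}$ whose distance parts are $\epsilon_i$; the amalgamation square commutes on points by construction and on distances because $\epsilon_1 \cmp D_{f_1} = \epsilon_2 \cmp D_{f_2}$. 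This gives AP for total dc-automorphisms.
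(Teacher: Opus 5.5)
Your proposal is correct and follows the paper's proof essentially step for step. You amalgamate the distance automorphisms via Lemma~\ref{thm:order_auto_amalagamation}, adjoin a fixed point just above $0$ and take the Dedekind completion to get a complete $\seq{E,\phi}$ with $0$ isolated, lift canonically via Observation~\ref{obs:liftings}, and finish with Proposition~\ref{thm:iso_auto_amalgamation}; the only difference is that you adjoin the new point only when $0$ is not already isolated, which is immaterial.
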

\begin{proof}
    Let $f_i\maps \seq{X, p, D_X, D_p} \to \seq{Y_i, q_i, D_{Y_i}, D_{q_i}}$ for $i = 1, 2$ be dc-embeddings of total automorphisms of ultrametric spaces, as in Figure~\ref{fig:dc-automorphism_amalgamation}.
    By Lemma~\ref{thm:order_auto_amalagamation}, there is $E \in \Lin^0$, $\phi \in \aut(E)$, and $(\Lin^0)^*$-maps $g_i\maps \seq{D_{Y_i}, D_{q_i}} \to \seq{E, \phi}$ such that $g_1 \cmp D_{f_1} = g_2 \cmp D_{f_2} =: g$.
    Without loss of generality, $E$ is complete and $0 \in E$ is isolated, as otherwise we would pass to $\seq{\bar{E}, \bar{\phi}} \geq \seq{E, \phi}$ where $\bar{E}$ is obtained by adding a new element $0 < \eps < E \setminus \set{0}$ and by taking the Dedekind completion, and where $\bar{\phi} \in \aut(\bar{E})$ is the unique extension of $\phi \cup \set{\eps \mapsto \eps}$ to the Dedekind completion.
    By Observation~\ref{obs:liftings}, we can lift the amalgamation problem to $\Ult^{*\seq{E, \phi}}$ via $\bar{g_1}$, $\bar{g_2}$, and $\bar{g}$, so it is enough to amalgamate $f_i\maps \seq{X, p, E, \phi} \to \seq{Y_i, q_i, E, \phi}$ for $i = 1, 2$. We can do that by Proposition~\ref{thm:iso_auto_amalgamation}.
\end{proof}

\begin{figure}[!ht]
    \centering
    \begin{tikzpicture}[
        x = {(7em, 0em)},
        y = {(0em, 3em)},
        label/.style = {
            edge label=#1,
            font=\footnotesize,
        },
        amalgamate/.style = {
            decorate, 
            decoration={coil,aspect=0, amplitude=1.5},
        },
    ]

    \node (X) at (0, 0) {$\seq{X, p, D_X, D_p}$};
    \node (Y1) at (-1, 1) {$\seq{Y_1, q_1, D_{Y_1}, D_{q_1}}$};
    \node (Y2) at (+1, 1) {$\seq{Y_2, q_2, D_{Y_2}, D_{q_2}}$};
    \begin{scope}[shift={(0, -0.5)}]
    \node (XE) at (0, 2) {$\seq{X, p, E, \phi}$};
    \node (Y1E) at (-1, 3) {$\seq{Y_1, q_1, E, \phi}$};
    \node (Y2E) at (+1, 3) {$\seq{Y_2, q_2, E, \phi}$};
    \node (Z) at (0, 4) {$\seq{Z, h, E, \phi}$};
    \end{scope}
    
    \graph{
        (X) ->[label=$f_1$] (Y1) ->[label=$\bar{g_1}$, dashed] (Y1E) ->[amalgamate] (Z);
        (XE) ->[label=$f_1$] (Y1E),
        (X) ->[label=$f_2$, swap] (Y2) ->[label=$\bar{g_2}$, swap, dashed] (Y2E) ->[amalgamate] (Z);
        (XE) ->[label=$f_2$, swap] (Y2E),
        (X) ->[label=$\bar{g}$, swap, dashed] (XE),
    };

    \begin{scope}[shift={(2.8, 0)}, xscale=0.6]
        \node (DX) at (0, 0) {$\seq{D_X, D_p}$};
        \node (DY1) at (-1, 1) {$\seq{D_{Y_1}, D_{q_1}}$};
        \node (DY2) at (+1, 1) {$\seq{D_{Y_2}, D_{q_2}}$};
        \node (E) at (0, 2) {$\seq{E, \phi}$};
    \end{scope}

    \graph{
        (DX) ->[label=$D_{f_1}$] (DY1) ->[label=$g_1$, dashed] (E),
        (DX) ->[label=$D_{f_2}$, swap] (DY2) ->[label=$g_2$, swap, dashed] (E),
        (DX) ->[label=$g$, swap, dashed] (E),
    };
\end{tikzpicture}
    \caption{Amalgamation of total dc-automorphisms.}
    \label{fig:dc-automorphism_amalgamation}
\end{figure}

\subsection{Amalgamation bases for partial dc-automorphisms}
\label{sec: uniquetotaliz}

Let $\Su \subseteq \Ult^*_\fin$ be the class of all finite partial dc-automorphisms whose all orbits of both sorts are stable, which is a necessary condition for being an amalgamation base by Proposition~\ref{thm:stable_necessary}.
We show that it is also sufficient.

\begin{observation} \label{distance_sort_stable}
    We have that $\seq{A, p} \in \Ult^*_\fin$ is in $\Su$ if and only if every orbit of the point sort is stable and $\seq{D_A, D_p}$ is an amalgamation base in $(\Lin^0)^*_\fin$ (or equivalently is determined in $(\Lin^0)^*_\fin$).
\end{observation}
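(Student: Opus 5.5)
The plan is to reduce the statement to a comparison of the distance sort in the two categories and then invoke Proposition~\ref{lo_totaliz}. By definition, $\seq{A, p} \in \Su$ means that every orbit of both sorts is stable in $\Ult^*$. The point-sort condition appears verbatim on both sides of the claimed equivalence. By Proposition~\ref{lo_totaliz}, $\seq{D_A, D_p}$ is an amalgamation base in $(\Lin^0)^*_\fin$ if and only if it is determined, if and only if every orbit of $\seq{D_A, D_p}$ is stable in $(\Lin^0)^*$. Note that the orbits of the distance sort of $\seq{A, p}$ are literally the orbits of $\seq{D_A, D_p}$. So it remains to show that a distance orbit $p^\Z(r)$, for $r \in D_A$, is stable with respect to extensions in $\Ult^*$ if and only if it is stable with respect to extensions in $(\Lin^0)^*$.

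For the direction ``unstable in $\Ult^*$ implies unstable in $(\Lin^0)^*$'', take an extension $\seq{B, q} \geq \seq{A, p}$ in $\Ult^*$ that merges $p^\Z(r)$ with another orbit or closes it. I will apply the functor $D^*$ (the lift of $D\maps \Ult \to \Lin^0$ from the Observation on lifting functors). This gives the extension $\seq{D_B, D_q} \geq \seq{D_A, D_p}$ in $(\Lin^0)^*$. Since the orbit of $r$ under $D_q$ is by definition the distance-sort orbit of $r$ under $q$, the same merging or closing occurs there. If one insists on finite extensions, restricting to the finitely many distances involved keeps us in $(\Lin^0)^*_\fin$, since $\Lin^0$ is hereditary.

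For the converse, take an extension $\seq{A, p} \to \seq{E, \psi}$ in $(\Lin^0)^*$ (without loss of generality an inclusion $D_A \leq E$, $D_p \subseteq \psi$) that merges or closes the orbit of $r$. I lift it via the canonical lifting of Observation~\ref{obs:liftings}. Define $\seq{B, q}$ with the same point set $B = A$, the same point map $q = p$, the distance set $D_B = E$, the same metric $d_B = d_A$ (now valued in $E$), and $D_q = \psi$. This is a legitimate object of $\Ult^*$ because two-sorted spaces need not attain every distance. The dc-compatibility $d(q(x), q(y)) = D_q(d(x, y))$ holds because $\psi$ extends $D_p$ and $d(x, y) \in \dom(D_p)$ whenever $x, y \in \dom(p)$. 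The inclusion is then an extension $\seq{A, p} \leq \seq{B, q}$ in $\Ult^*$ whose distance sort is exactly $\seq{E, \psi}$, so the same orbit is merged or closed. Hence the orbit is unstable in $\Ult^*$ as well.

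Combining the two directions, all distance orbits of $\seq{A, p}$ are stable exactly when $\seq{D_A, D_p}$ has all orbits stable, which by Proposition~\ref{lo_totaliz} is equivalent to $\seq{D_A, D_p}$ being an amalgamation base, or determined. I expect no real obstacle. The only point needing care is checking that the lifted structure $\seq{B, q}$ satisfies the definition of a partial dc-automorphism: its domain and range must be substructures, which holds with distance sorts $\dom(\psi)$ and $\rng(\psi)$. It also needs to lie in the finite category when required, which is handled by restricting to finite substructures as above.
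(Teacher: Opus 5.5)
Your proposal is correct and follows essentially the same route as the paper. You use Proposition~\ref{lo_totaliz} to reduce the claim to showing that a distance orbit is stable in $\Ult^*$ if and only if it is stable in $(\Lin^0)^*$. You then prove this by passing to $\seq{D_B, D_q}$ in one direction and to the lifted structure $\seq{A, p, E, \phi}$ from Observation~\ref{obs:liftings} in the other.
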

\begin{proof}
    The claim follows immediately from Proposition~\ref{lo_totaliz} once we observe that an orbit of $\seq{A, p}$ of the distance sort is stable in $\Ult^*_\fin$ if and only if it is stable in $(\Lin^0)^*_\fin$ as an orbit of $\seq{D_A, D_p}$, i.e. we can close or merge an orbit of $D_p$ in an extension $\seq{E, \phi} \geq \seq{D_A, D_p}$ if and only if we can close or merge the orbit in an extension $\seq{B, q} \geq \seq{A, p}$.
    The last equivalence is true because every such extension $\seq{B, q} \geq \seq{A, p}$ induces $\seq{E, \phi} = \seq{D_B, D_q} \geq \seq{D_A, D_p}$, and every extension $\seq{E, \phi} \geq \seq{D_A, D_p}$ induces $\seq{A, p, E, \phi} \geq \seq{A, p, D_A, D_p}$, and we let $\seq{B, q, D_B, D_q} = \seq{A, p, E, \phi}$.
\end{proof}

\begin{prop} \label{thm:unique_onepoint_extension}
    Let $\seq{A, p} \in \Su$.
    For every $x \in A \setminus \dom(p)$ there is a unique (up to isomorphism) forward one-point orbit extension $\seq{A', p'}$ at $x$.
    Moreover, every $r \in D_{A'} \setminus D_A$ is in an extension of an orbit of $D_p$.
\end{prop}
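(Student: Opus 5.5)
The plan is to describe a forward one-point orbit extension $\seq{A', p'}$ at $x$ completely by two pieces of data: the new value $y = p'(x)$ and the extended distance map $D_{p'}$. We then show that stability of all orbits forces both, up to isomorphism. Throughout we use Observation~\ref{distance_sort_stable}: $\seq{D_A, D_p}$ is determined in $(\Lin^0)^*_\fin$, and every point orbit of $\seq{A,p}$ is stable.

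First, $y \notin A$. Since $y \notin \rng(p)$, having $y \in A$ would either close the orbit of $x$ (if $y$ lies in it) or merge it with another orbit, and both are excluded by stability. So $A' = A \cup \set{y}$ with $y$ a new point, and $D_{A'} = D_A \cup \set{d(y,a): a \in A}$.

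Second, we determine the distance part. For $b \in \dom(p)$ we must have $d(y, p(b)) = D_{p'}(d(x,b))$.
\begin{itemize}
    \item If $d(x,b) \in \dom(D_p)$, this value is already $D_p(d(x,b))$.
    \item The distances $r = d(x,b) \notin \dom(D_p)$ form a finite set of points at the forward ends of orbits of $D_p$. Since $\seq{D_A, D_p}$ is determined, its unique totalization fixes, up to isomorphism over $D_A$, the position of each $D_{p'}(r)$ relative to $D_A$. Either it is an element of $D_A$ or it is a new element in a determined gap, extending the orbit of $r$.
\end{itemize}
Thus all distances $d(y,c)$ for $c \in \rng(p)$ are forced, together with their order-position. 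Every new one extends an orbit of $D_p$, which gives the ``moreover'' part, provided the remaining distances $d(y,a)$ are shown to take only these values or values already in $D_A$. Note that $\rng(p) \neq \emptyset$: otherwise $p = \emptyset$, and sending $x \mapsto x$ would close the orbit of $x$, contradicting stability.

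Third, the main step is to force $d(y,a)$ for $a \in A \setminus \rng(p)$. Put $\delta = \min_{c \in \rng(p)} d(y,c)$, attained at some $c_0$. There are three cases.
\begin{itemize}
    \item If $d(a,c_0) \neq \delta$, the ultrametric inequality gives $d(y,a) = \max\set{d(a,c_0), \delta}$.
    \item If $d(a,c_0) = \delta$ but $d(a,c) \neq d(y,c)$ for some $c \in \rng(p)$, then again $d(y,a)$ is forced. Since $d(y,c) \geq \delta$, the inequality at $c$ yields either $d(y,a) = \delta$ or $d(y,a) = \max\set{d(a,c), d(y,c)}$.
    \item The only unforced situation is when $d(a,c) = d(y,c)$ for every $c \in \rng(p)$. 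Then $d(y,a) \leq \delta$ could vary.
\end{itemize}
In the last situation, all values $D_{p'}(d(x,b)) = d(a,p(b))$ lie in $D_A$. Hence $p \cup \set{x \mapsto a}$, with distance part $D_{p'}$ (extending $D_p$ inside $D_A$, or together with the determined extension of $\seq{D_A,D_p}$), is a partial dc-automorphism extending $\seq{A,p}$. This extension closes or merges the orbit of $x$, contradicting stability. So no such $a$ exists, and every $d(y,a)$ is forced and lies in $D_A \cup \set{D_{p'}(r)}$.

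Consequently, any two forward one-point extensions at $x$ have the same distance tables. Their distance sorts are identified by the uniqueness of the totalization of $\seq{D_A,D_p}$, and $\id{A} \cup \set{y_1 \mapsto y_2}$ is an isomorphism. Existence follows from Observation~\ref{obs:totalization_exists} by restricting a total extension.

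I expect the main obstacle to be the third step. It needs a careful case analysis of the ultrametric inequality, and it must check that the putative map $x \mapsto a$ really respects the distance sort. That includes the case where some $D_{p'}(r)$ is a new distance, which is precisely when the third case cannot occur.
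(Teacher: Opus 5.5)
Your proposal is correct and follows essentially the same route as the paper. You reduce to showing that each $d(y,a)$ is determined by $D_{p'}$ (unique because $\seq{D_A, D_p}$ is determined) and old distances, treat $a \in \rng(p)$ directly, and for $a \notin \rng(p)$ use the isosceles property. You rule out the only undetermined configuration, $d(a,c) = d(y,c)$ for all $c \in \rng(p)$, because then $p \cup \set{x \mapsto a}$ would be a partial dc-automorphism of $A$ that closes or merges the orbit of $x$.

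The differences are minor. The paper reaches that same configuration by choosing $b \in \dom(p)$ minimizing $d(p(b),a)$, which gives the uniform formula $d(y,a) = \max\set{D_{p'}(r_a), s_a}$; you instead use any witness $c$ with $d(a,c) \neq d(y,c)$. You also make explicit three points the paper leaves implicit: $y \notin A$, $\rng(p) \neq \emptyset$, and existence of the extension.
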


\begin{proof}
    Let $\seq{A', p'} = \seq{A \cup \set{y}, p \cup \set{x \mapsto y}}$ be any such extension.
    It is enough to show that for every $a \in A$ the distance $d_{A'}(a, y)$ is determined by the distance part $D_{p'}$ and by an old distance.
    Specifically, $d_{A'}(a, y) = \max\set{D_{p'}(r_a), s_a}$, where $r_a, s_a \in D_A$ depend only on $a$ and $x$.
    This is indeed enough since the extension $D_{p'} \geq D_p$ is unique up to isomorphism as $D_p$ is determined by Observation~\ref{distance_sort_stable}.

    Let $a \in A$.
    If $a \in \rng(p)$ and so $a = p(a')$ for some $a' \in A$, we have $d(a, y) = D_{p'}(d(a', x))$, and hence in the formula above we can  put $r_a = d(a', x)$ and $s_a = 0$.
    
    Otherwise, $a \notin \rng(p)$.
    Let $b \in \dom(p)$ be such that the distance $d(p(b), a)$ is minimal possible.
    Then we define $r_a = d(b, x)$ and $s_a = d(p(b), a)$.
    We consider the triangle $y, a, p(b)$ with distances $d(y, a)$, $s_a$, and $d(y, p(b))= D_{p'}(r_a)$.
    Clearly, if $s_a \neq D_{p'}(r_a)$, then $d(y, a) = \max\set{s_a, D_{p'}(r_a)}$ and we are done.
    Otherwise, we have $d(y, a) = s_a = D_{p'}(r_a)$ and we are also done, or $d(y, a) < s_a = D_{p'}(r_a)$.
    We claim the last case contradicts our assumptions as then $q = p \cup \set{x \mapsto a}$ induces a partial automorphism of $A$.
    For that it is enough to show that $\phi(d(x, t)) = d(a, p(t))$ for every $t \in \dom(p)$, where $\phi$ is the unique totalization of $D_{p'}$ in $(\Lin^0)^*$, as then letting $D_q$ be the restriction of $\phi$ to $d[\dom(q)^2]$ turns $q$ into a partial dc-automorphism.
    Let $t \in \dom(p)$.
    We have $d_{A'}(y, a) < s_a = d(p(b), a) \leq d(p(t), a)$.
    And therefore by considering the triangle $y, a, p(t)$ in $A'$ we have
    \[
        d(a, p(t)) = d_{A'}(y, p(t)) = D_{p'}(d(x, t)) = \phi(d(x, t)) = D_q(d(x, t)).
    \]
    The moreover part of the proposition follows from $d_{A'}(a, y) = \max\set{D_{p'}(r_a), s_a}$.
\end{proof}

\begin{tw}\label{spaces_totaliz}
    $\Ult^*_\fin$ has CAP and for $\seq{A,p}\in \Ult^*_\fin$ the following conditions are equivalent:
    \begin{enumerate}[label={\rm(\alph*)}]
        \item $\seq{A,p}$ is an amalgamation base;
        \item $\seq{A,p}$ is determined;
        \item every orbit of $\seq{A,p}$ is stable;
        \item every orbit of the point sort of $\seq{A, p}$ is stable and $\seq{D_A, D_p}$ is an amalgamation base in $(\Lin^0)^*_\fin$.
    \end{enumerate}
\end{tw}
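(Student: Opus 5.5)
We follow Strategy~\ref{strategy} with $\C = \Ult$ and with $\Su$ the class of finite partial dc-automorphisms all of whose orbits (of both sorts) are stable. First, $\Ult$ is nice. It is hereditary, $\sigma$-complete and consists of locally finite structures (a finite set of points generates only finitely many distances), and $\Ult_\fin$ is Fraïssé by Theorem~\ref{thm:U_Fraisse}. Step~(1) of the strategy is Corollary~\ref{thm:dc_auto_amalgamation}: total dc-automorphisms have AP. Hence Theorem~\ref{thm:amalgamation_from_totalization} applies, giving (a)~$\Leftrightarrow$~(b) once we know that determined partial automorphisms are cofinal.

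For step~(2) we verify conditions (1)--(3) of Proposition~\ref{prop: suffimpldeterm} for $\Su$. By symmetry ($\seq{A,p} \in \Su$ iff $\seq{A,p^{-1}} \in \Su$), it suffices to treat forward extensions. Condition~(1) is then exactly Proposition~\ref{thm:unique_onepoint_extension}.

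Condition~(2) requires that a one-point orbit extension $\seq{A',p'}$ of $\seq{A,p}\in\Su$ again has only stable orbits. For the point sort this is immediate: the new point $y$ lies on the extended orbit of $x$. If some extension of $\seq{A',p'}$ closed or merged orbits of $\seq{A',p'}$, then the same extension, viewed over $\seq{A,p}$, would close or merge orbits of $\seq{A,p}$, since orbits of $A'$ are unions of orbits of $A$ extended along existing orbits. For the distance sort, the ``moreover'' part of Proposition~\ref{thm:unique_onepoint_extension} says that every new distance lies on an extension of an orbit of $D_p$. Hence $\seq{D_{A'},D_{p'}}$ is a one-point orbit extension (or a chain of such) of the determined $\seq{D_A,D_p}$, and it stays determined by Observation~\ref{thm:determined_to_pointwise} applied in $\Lin^0$, i.e.\ it stays in the stable class by Proposition~\ref{lo_totaliz}.

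Condition~(3), cofinality, is obtained as in Proposition~\ref{lo_totaliz}. Given $\seq{A,p}$, if some orbit can be closed or two orbits merged in an extension, pass to such an extension and restrict it to $A$ together with the connecting points, deleting any newly created orbits. Each such step strictly decreases the number of non-closed orbits of $A$ (counting both sorts), so after finitely many steps every orbit is stable. The point to check is that these steps do not create infinitely many new orbits. Adding connecting points may add new distances, but these lie on the closed or merged orbits, and for the distance sort we may instead first stabilize $\seq{D_A,D_p}$ via cofinality in $(\Lin^0)^*_\fin$ and lift using Observation~\ref{distance_sort_stable}. Then Proposition~\ref{prop: suffimpldeterm} gives that $\Su$ consists of determined partial automorphisms and is cofinal. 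Theorem~\ref{thm:amalgamation_from_totalization} therefore yields CAP, (a)~$\Leftrightarrow$~(b), and (c)~$\Rightarrow$~(b).

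For (a)~$\Rightarrow$~(c) we use Proposition~\ref{thm:stable_necessary}, which requires strong amalgamation in $\Ult_\fin$. This holds: first amalgamate the distance orders strongly in $\Lin^0$, then apply the free-like amalgamation of Lemma~\ref{thm:complete_isometric_amalgamation} after completing the distance order and lifting as in Observation~\ref{obs:liftings}, and finally restrict to the finitely many distances actually used. That amalgam is disjoint on points. Finally, (c)~$\Leftrightarrow$~(d) is Observation~\ref{distance_sort_stable}.

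The main obstacle is condition~(2) for the distance sort. We must be sure that the new distances introduced by $d_{A'}(a,y)=\max\{D_{p'}(r_a),s_a\}$ behave as one-point orbit extensions in $\Lin^0$ that inherit stability, rather than opening up a possibility to merge or close orbits of $D_{p'}$. If this direct argument falters, we would argue instead that $D_{p'}$ is a restriction of the unique totalization of $D_p$, hence is itself determined.
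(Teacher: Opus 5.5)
Your overall architecture is the paper's. You use the same class $\Su$, Corollary~\ref{thm:dc_auto_amalgamation} for total automorphisms, Proposition~\ref{thm:unique_onepoint_extension} for uniqueness, Proposition~\ref{thm:stable_necessary} for (a)~$\Rightarrow$~(c), and Observation~\ref{distance_sort_stable} for (c)~$\Leftrightarrow$~(d). Your explicit check of strong amalgamation in $\Ult_\fin$, which the paper leaves implicit, is fine, and so is your argument for condition~(2) via Observation~\ref{thm:determined_to_pointwise} in $\Lin^0$.

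The genuine gap is in the cofinality condition~(3). When you close or merge point orbits by connecting points $t_1,\ldots,t_n$, the new distances $d(t_i,b)$ with $b\in A$ need not lie on extensions of existing orbits of $D_p$. They also cannot be deleted, because the points $t_i$ must stay. For example, let $A=\set{a,b,c}$ have all distances equal to $1$, with $p$ empty on points and $D_p=\set{0\mapsto 0,\,1\mapsto 1}$, so the distance sort is already stable. Merge the trivial orbits of $a$ and $b$ via $a\mapsto t\mapsto b$ with $d(a,t)=d(t,b)=1$ and $d(t,c)=s$ for a new distance $0<s<1$. This is a legitimate partial dc-automorphism, and $\set{s}$ is a new trivial, hence unstable, orbit of the distance sort. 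So three things in your step~(3) fail:
\begin{itemize}
\item the number of non-closed orbits ``counting both sorts'' need not decrease;
\item the claim that new distances lie on the closed or merged orbits is false;
\item stabilizing $\seq{D_A,D_p}$ \emph{first} does not help, since later point-sort steps can create fresh unstable distance orbits.
\end{itemize}

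The repair is to use the opposite order, as the paper does. First handle only the point sort. Each closing or merging step, restricted to $A\cup\set{t_1,\ldots,t_n}$, strictly lowers the number of non-closed point orbits and adds no new point orbit, so this terminates whatever happens to the distances. Then, for the resulting $\seq{B,q}$, use Proposition~\ref{lo_totaliz} to extend $\seq{D_B,D_q}$ to some $\seq{E,\phi}$ with all orbits stable, and pass to $\seq{B,q,E,\phi}$ as in Observation~\ref{distance_sort_stable}. This changes neither the points nor $q$, and every extension of $\seq{B,q,E,\phi}$ is an extension of $\seq{B,q}$, so the point orbits remain stable.

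Two smaller remarks. First, conditions~(1) and~(2) must also be checked for one-point orbit extensions at a distance $x\in D_A\setminus\dom(D_p)$. This is immediate, since $\seq{D_A,D_p}$ is determined and the point orbits are untouched. Second, your fallback claim that a restriction of the unique totalization of $D_p$ is determined is false in general: it fails when the restriction skips intermediate elements of an orbit. Your main argument for~(2) does not need it.
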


\begin{proof}
We follow Strategy~\ref{strategy}.
First, we showed in Corollary~\ref{thm:dc_auto_amalgamation} that total automorphisms have AP.
Our class of sufficient partial automorphisms $\Su$ is defined by (c), or equivalently (d), by Observation~\ref{distance_sort_stable}.
We need to check the assumptions (1), (2), (3) of Proposition~\ref{prop: suffimpldeterm}.

Assumption (1) for the point sort follows from Proposition~\ref{thm:unique_onepoint_extension} (as in the proof of Proposition~\ref{lo_totaliz}, it is enough to consider the forward case).
For the distance sort (1) follows from Observation~\ref{distance_sort_stable} as the distance part is determined.

As a stable orbit cannot become unstable in an extension, to show (2) it is enough to observe that a one-point orbit extension at $x$ does not add any new orbits.
If $x$ is a distance, we only extend an existing orbit of the distance sort, and if $x$ is a point, we extend an existing orbit of the point sort and possibly extend an existing orbit of the distance sort, but by Proposition~\ref{thm:unique_onepoint_extension} no new orbit of the distance sort is added.

To show (3), we inductively close or merge existing orbits as in the proof of Proposition~\ref{lo_totaliz}, but first we take care of all orbits from the point sort, which may introduce new orbits of the distance sort, and then we take care of all orbits of the distance sort, and this does not interfere with the point sort.

Since the assumptions (1), (2), (3) are satisfied, by Proposition~\ref{prop: suffimpldeterm} and Theorem~\ref{thm:amalgamation_from_totalization} we have that $\Ult^*_\fin$ has CAP and we have (a)~$\Leftrightarrow$~(b)~$\Leftarrow$~(c) in the present proposition.
The implication (a)~$\Rightarrow$~(c) follows from Proposition~\ref{thm:stable_necessary}.
\end{proof}

\begin{wn} \label{cor:generic_auto}
    The countable Urysohn ultrametric space $\U$ has a generic dc-automorphism.
\end{wn}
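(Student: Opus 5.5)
The plan is to apply Theorem~\ref{thm:KR} with $n = 1$ to the Fraïssé class $\Ult_\fin$, whose limit is $\U$ by Theorem~\ref{thm:U_Fraisse}. It then suffices to show that $\Ult^*_\fin$ has JEP and WAP, and that it has only countably many isomorphism types. Countability is immediate: objects are finite two-sorted structures together with a finite partial bijection on each sort.

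For WAP I will use the CAP established in Theorem~\ref{spaces_totaliz}. Given $\seq{A, p} \in \Ult^*_\fin$, CAP provides an embedding $i\maps \seq{A, p} \to \seq{A', p'}$ into an amalgamation base $\seq{A', p'}$. Then $i$ is an amalgamable map. Indeed, any two maps $\alpha_1, \alpha_2$ out of $\seq{A', p'}$ can be amalgamated by some $\beta_1, \beta_2$ with $\beta_1 \cmp \alpha_1 = \beta_2 \cmp \alpha_2$, and composing with $i$ preserves this equality. So CAP implies WAP.

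For JEP, take $\seq{A_1, p_1}$ and $\seq{A_2, p_2}$. The first option I would try is to amalgamate over the empty structure, i.e.\ the space with no points and $D = \set{0}$, carrying the empty automorphism. This object is determined, since its unique totalization is itself and it maps uniquely into every total automorphism. By Theorem~\ref{spaces_totaliz} it is therefore an amalgamation base, and amalgamating the two embeddings from it gives a joint embedding. If the paper's conventions exclude the empty space, I would instead argue directly. Extend each $\seq{A_j, p_j}$ to a total automorphism using Observation~\ref{obs:totalization_exists}. Amalgamate the two totalizations over the empty (or trivial) total automorphism via Corollary~\ref{thm:dc_auto_amalgamation}. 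Finally, restrict to the finite substructure generated by the images of $A_1$ and $A_2$, as at the end of the proof of Theorem~\ref{thm:amalgamation_from_totalization}.

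The main point needing care is JEP, specifically whether the trivial object is legitimately a member of $\Ult^*_\fin$ and an amalgamation base. Since $\Lin^0$ requires $0$, the natural base is $\seq{\emptyset, \set{0}}$. This is a finite two-sorted ultrametric space that embeds into everything, so the argument should go through. Once JEP and WAP are in place, Theorem~\ref{thm:KR} gives that the weak Fraïssé limit of $\Ult^*_\fin$ is of the form $\seq{\U, h}$ with $h$ a generic dc-automorphism.
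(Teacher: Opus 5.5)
Your proposal is correct and follows essentially the same route as the paper: CAP from Theorem~\ref{spaces_totaliz} gives WAP, and the empty automorphism $\seq{\emptyset, \set{0}}$ is total, hence determined, hence an amalgamation base, so JEP follows by amalgamating over it. Theorem~\ref{thm:KR} then yields the generic dc-automorphism; your fallback JEP argument via totalizations and Corollary~\ref{thm:dc_auto_amalgamation} is also valid but unnecessary, since that empty object is a legitimate member of $\Ult^*_\fin$.
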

\begin{proof}
    By the Theorem \ref{spaces_totaliz},  $\Ult^*_\fin$ has CAP, and since the empty automorphism $\seq{\emptyset, \emptyset}$ is total and so an amalgamation base, JEP follows from amalgamation.
    Hence, $\U$ has a generic automorphism by the theorem of Kechris and Rosendal (Theorem~\ref{thm:KR}).
\end{proof}

In both $(\Lin^0)^*_\fin$ and $\Ult^*_\fin$ the amalgamation bases are exactly partial automorphisms with stable orbits.
We believe this is not the case in general, but we failed to find a simple counterexample.
\begin{question} \label{que:stable_always_works}
    Is there a nice category of structures $\C$ such that $\C_\fin$ has SAP, the full subcategory of $\C^*$ of total automorphisms has AP, and $\C^*_\fin$ has CAP, but not every $\seq{A, p} \in \C^*_\fin$ with stable orbits is determined?
\end{question}

Coming back to $\Ult^*_\fin$,
let us define a subfamily $\Su' \subseteq \Su$ of all amalgamations bases $\seq{A, p}$ where we additionally suppose that $\seq{A, p}$ is precise, i.e. the spaces $A$, $\dom(p)$, and $\rng(p)$ use all distances, and that every orbit of the point sort is closed or monotone.

\begin{prop} \label{thm:sufficient_cofinal}
    The family $\Su' \subseteq \Su$ is cofinal, i.e. every partial automorphism $\seq{A, p} \in \Ult^*_\fin$ has an extension in $\Su'$.
\end{prop}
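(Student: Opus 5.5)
Given $\seq{A, p} \in \Ult^*_\fin$, I will produce an extension in $\Su'$ in three stages, each of which only passes to an extension. First, I make every orbit of the point sort closed or monotone by repeatedly applying Theorem~\ref{closedormonotone}. This is done one orbit at a time. The extension produced for one orbit may introduce new points and hence new orbits, so I need to control this. My plan is to read off the construction in the proof of Theorem~\ref{closedormonotone}: the extension $\seq{B, q}$ is a finite restriction of a total $\seq{X, f}$ containing $A$, together with the segment of the orbit being treated. Hence every new point of $B$ lies on the orbit of $a$, and no new point-orbits are created. Existing orbits that are already closed or monotone stay so under extension, since closedness is preserved, and monotonicity (a distance moved by $D_q$) persists. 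So after finitely many steps, one per original orbit, all point-orbits are closed or monotone.

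Second, I make every orbit stable. I follow the proof of (3) in Theorem~\ref{spaces_totaliz}: first close or merge point-orbits, then orbits of the distance sort. I need closedness and monotonicity of point-orbits to survive this stage. Closing a closed orbit is vacuous, and merging preserves the property: if one of the merged pieces is monotone, the merged orbit still moves a distance, and two closed orbits cannot merge. The only delicate point is that merging could create new horizontal behaviour. But a merged orbit containing a monotone or closed piece remains monotone or closed, so this does not happen. Dealing with the distance sort does not touch the points. Since only merging or closing is performed, the number of orbits decreases, so this terminates. By Theorem~\ref{spaces_totaliz}, the result lies in $\Su$.

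Third, I need preciseness: $A$, $\dom(p)$ and $\rng(p)$ should use all distances of $D_A$. This is where I expect the main obstacle. Removing unused distances would break the extension property, so instead I must realize each distance by adding points. For a point-orbit that is closed or monotone and stable, the unique one-point orbit extensions of Proposition~\ref{thm:unique_onepoint_extension} keep us inside $\Su$ (condition (2) of Proposition~\ref{prop: suffimpldeterm}), and the moreover part there shows that no new distance orbits appear. I would extend the monotone point-orbits forward and backward far enough that along them the distances $\max\set{\rho(a_m),\rho(a_n)}$ (Lemma~\ref{thm:monotone_distances}) cover the corresponding distance-orbit segments. For distances $r \in D_A$ still not realized, I would add, inside a total extension $\seq{X,f}$ from Observation~\ref{obs:totalization_exists} (taken to realize $r$, e.g.\ by homogeneity of $\U$ adding a point at distance $r$), the full finite or extended orbit segments of witnessing points. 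I would then restabilize as in the second stage. The risk here is that restabilizing or adding witnesses creates new unused distances or horizontal orbits. To handle this, I would argue by a counting induction. By Observation~\ref{distance_sort_stable}, the distance part is determined, so extensions only lengthen existing distance orbits. Along a monotone point-orbit, each new step realizes exactly the newly added monotone distance. Finally, $\dom(p)$ and $\rng(p)$ become precise once each orbit extends one step beyond the realizing pair in both directions, which the unique one-point extensions allow.
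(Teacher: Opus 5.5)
Your first two stages are sound. They match the paper's opening moves in the reverse order: the paper first passes into $\Su$ via Theorem~\ref{spaces_totaliz} and then applies Theorem~\ref{closedormonotone}, noting that stable point orbits stay stable, whereas you apply Theorem~\ref{closedormonotone} first and then check that merging keeps orbits closed or monotone.

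The gap is in the third stage, preciseness. Your key justification is that, since the distance part is determined, ``extensions only lengthen existing distance orbits''. This holds only for one-point orbit extensions (Proposition~\ref{thm:unique_onepoint_extension}). Determinedness says nothing about arbitrary extensions, which can add entirely new distance orbits. Concretely:
\begin{itemize}
\item A witness $z$ taken from a total extension inside $\U$, together with segments of its $f$-orbit, will in general bring in new distances $d(f^m(z), f^n(z))$ and $d(f^m(z), a)$ for $a \in A$.
\item The orbit of $z$ then has to be closed or pushed to a monotone one (Theorem~\ref{closedormonotone}), which adds further points and distances.
\item Restabilizing the distance sort, as in Proposition~\ref{lo_totaliz}, adds elements of the distance sort that are realized by no pair of points at all.
\end{itemize}
So each round can create new unrealized distances, and you exhibit no quantity that decreases; the ``counting induction'' is not an argument. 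Your closing sentence about $\dom(p)$ and $\rng(p)$ also does not say how to realize a distance of $\dom(D_p)$ that is attained in $A$ only by pairs leaving $\dom(p)$, or not attained at all.

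The missing idea is to fix the final distance structure first, and then realize it by explicitly designed witnesses that cannot interact with anything. The paper extends $\seq{D_{A'}, D_{p'}}$ to a stable $\seq{E, f}$ in $(\Lin^0_\fin)^*$ with a new maximum $r$ fixed by $f$ and with $E = \dom(f) \cup \rng(f)$. It then adds the following gadgets, disjointly and at distance $r$ from all other parts:
\begin{itemize}
\item for every increasing or decreasing orbit $(t_i)_{i<k}$ of $f$, a chain $a_0 \mapsto \cdots \mapsto a_k$ with $d(a_i, a_j) = \max\set{t_i, t_{j-1}}$ for $i<j$, which realizes every $t_i$, its domain realizing all but the last and its range all but the first;
\item for every fixed positive distance $x \neq r$, a pair of fixed points at distance $x$.
\end{itemize}
Because $f(r) = r$, the result is a partial dc-automorphism whose distance part is exactly $\seq{E, f}$. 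Hence no new distances appear and the distance sort stays stable. The new point orbits are fixed points or monotone. They cannot merge with anything, since in any extension a gadget orbit has all internal distances $< r$, while points of different parts are at distance $r$. This yields preciseness in a single step, with no iteration.
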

\begin{proof}
    Without loss of generality, by Theorem~\ref{spaces_totaliz}, we already have $\seq{A, p} \in \Su$.
    We apply Theorem~\ref{closedormonotone} to extend every orbit of the point sort, one by one, so that it becomes closed or monotone.
    Clearly, every orbit of the point sort of the resulting extension $\seq{A', p'}$ remains stable, but we may have added some new orbits of the distance sort.
    We consider an extension $\seq{E, f} \geq \seq{D_{A'}, D_{p'}}$ in $(\Lin^0_\fin)^*$ where all orbits are stable and without loss of generality $f(r) = r$ for $r = \max(E)$.
    By passing to a suitable orbit extension if needed, we may additionally assume that $E = \dom(f) \cup \rng(f)$ (so every $t \in E$ is a part of an increasing or decreasing or fix-point orbit) and that the end-points of the increasing and decreasing orbits of $f$ are not elements of $D_{\dom(p')}$ and $D_{\rng(p')}$.

    Finally we pass to an extension $\seq{A', p'} \leq \seq{A'', p''} \in \Su'$ with $\seq{D_{A''}, D_{p''}} = \seq{E, f}$.
    For every increasing or decreasing orbit $t = (t_i)_{0 \leq i < k}$ of $\seq{E, f}$ we consider the space $M_t = \set{a_i: 0 \leq i \leq k}$ with the distance $d(a_i, a_j) = t_i$ for $0 \leq i < j \leq k$ for the increasing case and $d(a_i, a_j) = t_{j - 1}$ for the decreasing case, and its partial automorphism $m_t(a_i) = a_{i + 1}$ for $0 \leq i < k$.
    Note that $\dom(m_t)$ uses all the distances $t_i$, but the last one; $\rng(m_t)$ uses all the distance $t_i$, but the first one.
    Next, for every fixed point $x \in E \setminus \set{r}$ we consider the space $F_x = \set{b_0, b_1}$ with the distance $d(b_0, b_1) = x$, and its partial automorphism $q_x(b_i) = b_i$, so that the distance $x$ is used both in $\dom(q_x)$ and $\rng(q_x)$.
    Finally we put the parts together: we let 
    \[\textstyle
        \seq{A'', p''} = \seq{A', p'} \cup \bigcup_t \seq{M_t, m_t} \cup \bigcup_x \seq{F_x, q_x}
    \]
    and let $d(y, z) = r$ for $y$ and $z$  from different parts.

    Clearly, every orbit of the point sort is closed or monotone, and $\seq{D_{A''}, D_{p''}} = \seq{E, f}$, so all orbits of the distance sort are stable.
    By the additional assumptions on $\seq{E, f}$, the spaces $A''$, $\dom(p'')$, $\rng(p'')$ are precise.
    It remains to show that orbits of the point sort are stable.
    As every orbit is already closed or monotone, it is enough to show that the orbits cannot be merged.
    Since we started with $\seq{A', p'}$, no two orbits from $p'$ can be merged.
    Clearly, no new fixed-point orbit can be merged with anything.
    Every extension of a new monotone orbit satisfies $d(y, z) < r$ for each pair of its points since $D_{p''}(r) = r$, and therefore cannot be merged with any other orbit of $\seq{A'', p''}$ as the distance between different orbits is $r$.
\end{proof}

\begin{wn}\label{wn:generic-clo or mon}
    Every orbit of the point sort of the generic dc-automorphism $g$ of $\U$ is closed or monotone, and $D_g$ is the generic automorphism of $D_\U = \Qpos$.
\end{wn}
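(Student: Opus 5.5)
By Theorem~\ref{thm:KR} together with Corollary~\ref{cor:generic_auto}, the structure $\seq{\U, g}$ is the weak Fraïssé limit of $\Ult^*_\fin$. Hence every sequence in $\Ult^*_\fin$ with colimit $\seq{\U, g}$ is weak Fraïssé. The plan is to apply Observation~\ref{obs:limit_from_cofinal} to the cofinal family $\Su'$ of Proposition~\ref{thm:sufficient_cofinal}. We write $\seq{\U, g}$ as the colimit of a chain $\seq{A_n, p_n}$ and intertwine it with a chain $\seq{B_n, q_n}$ with $B_n \in \Su'$, both with the same colimit. This uses weak absorption: from some stage on, every extension of $\seq{A_{m'}, p_{m'}}$ embeds back over $A_m$. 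So $\seq{\U, g}$ is the increasing union of members of $\Su'$.

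For the first claim, take a point $a \in \U$. Then $a \in B_n$ for some $n$, and its orbit in $\seq{B_n, q_n}$ is closed or monotone.

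If it is closed, then $q_n^k(a) = a$, so $g^k(a) = a$ and the orbit of $g$ is closed.

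If it is monotone, there are points $a_i, a_{i+N}, a_{i+1}, a_{i+1+N}$ in the finite orbit with $D_{q_n}(d(a_i, a_{i+N})) = d(a_{i+1}, a_{i+1+N}) \neq d(a_i, a_{i+N})$. Since $D_g$ extends $D_{q_n}$, the distance $d(a_i, a_{i+N})$ is not fixed by $D_g$, so the $g$-orbit has some monotone distance. By the classification of Theorem~\ref{thm:single_orbit}, such an orbit is not horizontal, and being infinite it is monotone. I should double-check that the monotone period $N$ defined via $r_n$ then appears; it suffices that some $r_n$ is a non-fixed distance, which holds along the orbit up to the shift.

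For the second claim, apply the functor $D^*\maps \Ult^* \to (\Lin^0)^*$ and Corollary~\ref{thm:generic_transfer}. The functor $D$ is $\sigma$-continuous, and $D(\U) = \Qpos$ is the Fraïssé limit of $\Lin^0_\fin$; this follows from Theorem~\ref{thm:U_Fraisse} and the transfer in the previous paper. It remains to check that $D^*_\fin\maps \Ult^*_\fin \to (\Lin^0)^*_\fin$ is cofinal and weakly absorbing.

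Cofinality is easy: a partial automorphism $\seq{E, \phi}$ of a finite order lifts to $\seq{A, p}$ using precise witnesses, as in the construction of $\seq{A'', p''}$ in Proposition~\ref{thm:sufficient_cofinal}.

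Weak absorption is the main obstacle. Given $\seq{A, p}$, I would choose the guardian extension $e$ to be an extension into $\Su$, so that the point orbits are stable. Now let $\seq{D_A, D_p} \to \seq{E', \phi'}$ be an arbitrary extension of the distance part. Using the canonical lifting of Observation~\ref{obs:liftings}, we get $\seq{A, p, E', \phi'}$, which is an $\Ult^*$-extension whose $D^*$-image is exactly the given one. This suggests that $D^*$ is even star-surjective, hence absorbing, and the weak version follows trivially.

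If this works, Corollary~\ref{thm:generic_transfer} shows that $D_g$ is generic in $\Aut(\Qpos)$.
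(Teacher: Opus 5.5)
Your proposal is correct and follows essentially the paper's route. For the first part, you realize $\seq{\U,g}$ as a colimit of members of the cofinal family $\Su'$ via Observation~\ref{obs:limit_from_cofinal}; for the second, you apply Corollary~\ref{thm:generic_transfer} to $D^*_\fin$, which is star-surjective by the canonical liftings of Observation~\ref{obs:liftings}. Two steps are unnecessary detours rather than gaps. Passing first to a guardian extension in $\Su$ adds nothing, since star-surjectivity already gives weak absorption. Cofinality (indeed wideness) is immediate by taking an empty point sort, $\seq{\seq{\emptyset, E}, \seq{\emptyset,\phi}}$, so you do not need to build precise witnesses.
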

\begin{proof}
    For both parts we use Theorem~\ref{thm:KR}, which says that the generic automorphism is the limit of the corresponding weak Fraïssé class of partial automorphisms.
    For the first part use Observation~\ref{obs:limit_from_cofinal} to obtain the generic dc-automorphism as a colimit of a sequence from any cofinal subclass of $\Ult_\fin^*$.
    We can take the subclass of amalgamation bases $\Su'$, which is cofinal by Proposition~\ref{thm:sufficient_cofinal}, or even the class of all finite partial dc-automorphisms with only closed or monotone orbits, which is cofinal already by Theorem~\ref{closedormonotone}.

    For the second part can we apply Corollary~\ref{thm:generic_transfer} to the forgetful functor $D\maps \Ult \to \Lin^0$ as $D^*_\fin$ is clearly wide and star-surjective by Observation~\ref{obs:liftings}.
\end{proof}

\section{No generic pair of dc-automorphisms}
\label{sec:no_pair}

In this section we show that the countable Urysohn ultrametric space $\U$ has no generic pair of dc-automorphisms.

The following result was originally proved by Hodkinson in an unpublished note.

\begin{tw}[{Hodkinson, see Truss~\cite[Theorem~2.4]{T}}]\label{Truss}
    There is no generic pair of automorphisms of $\seq{\Q, \leq}$.
    Equivalently, the class $(\Lin_\fin)^{*2}$ does not have WAP.
\end{tw}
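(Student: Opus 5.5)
This theorem is cited from Hodkinson (see Truss), so the plan is to reconstruct the standard argument. By Theorem~\ref{thm:KR} applied to $\Lin_\fin$, whose Fraïssé limit is $\Q$, the two formulations are equivalent. It therefore suffices to show that $(\Lin_\fin)^{*2}$ fails WAP. Concretely, we exhibit one object $\seq{A,p_1,p_2}$ with the following property. For every extension $\seq{A',p'_1,p'_2}\geq\seq{A,p_1,p_2}$ there are two further extensions $\seq{B,q_1,q_2}$ and $\seq{C,r_1,r_2}$ of $\seq{A',p'_1,p'_2}$ that cannot be amalgamated over $\seq{A,p_1,p_2}$. We may take $A$ to be a single point $a$ with both maps empty. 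Passing to a finite piece of an extension loses nothing, since non-amalgamability is witnessed by finitely many points.

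The key idea is to use the order-preserving nature of $p_1$ and $p_2$ to encode an infinite binary choice. Given $A'\ni a$, form the group word $w=p_1p_2p_1^{-1}p_2^{-1}$ or a similar word. Choose a point $b$ in $A'$, or a new point beyond $A'$, at which the relevant compositions of $p'_1,p'_2$ are not yet defined. Then build $B$ so that $w(b)>b$ and $C$ so that $w(b)<b$, with both extensions keeping everything consistent with $A'$.

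To make amalgamation over $A$ impossible, and not merely over $A'$, the disagreement must be tied back to $a$. Fix a point $c=u(a)$ for a word $u$ long enough that $u(a)$ is undefined in $A'$. Define $u$ in $B$ and in $C$ so that $u(a)$ lands on opposite sides of a specific configuration, for instance comparing $u(a)$ with $v(a)$ for another long word $v$. In $B$ we force $u(a)<v(a)$, and in $C$ we force $u(a)>v(a)$. Any amalgam over $\seq{A,p_1,p_2}$ extends both $\seq{B,\ldots}$ and $\seq{C,\ldots}$. In it $u(a)$ and $v(a)$ are computed by the same words starting from the identified point $a$, so both inequalities would hold at once, a contradiction.

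The hard part is realizability. We must find words $u,v$ such that for every finite partial pair $\seq{A',p'_1,p'_2}$ both orderings of $u(a),v(a)$ extend consistently. Here $A'$ may already contain the images of $a$ under many words, including prefixes of $u$ and $v$. I would choose $u,v$ to share a long common prefix $s$ such that $s(a)$ is a fresh point outside $A'$, with the subsequent letters also fresh. Concretely, take $u=p_1 s$ and $v=p_2 s$, so that $u(a)=p_1(s(a))$ and $v(a)=p_2(s(a))$. Then $p_1$ and $p_2$ both act on the new point $s(a)$, with relative order otherwise unconstrained. Placing $s(a)$ in a gap of $A'$ not touched by $\dom(p'_i)$ requires a careful choice. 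Because both $p'_1$ and $p'_2$ are order-preserving, the interval constraints they impose on the images can always be met in either relative order. This is the verification I would carry out first.
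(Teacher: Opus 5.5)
The paper does not prove this theorem; it only cites Hodkinson via Truss. Your proposal therefore has to stand on its own, and as written it has a genuine gap.

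\emph{Your witness cannot work.} WAP actually holds at $\seq{\{a\},\emptyset,\emptyset}$. Let $A'=\seq{\{a\},\id{},\id{}}$, and let $\seq{B,q_1,q_2}$ and $\seq{C,r_1,r_2}$ be any two extensions of $A'$.
\begin{itemize}
\item Every $q_i,r_i$ fixes $a$, so it keeps points below $a$ below $a$ and points above $a$ above $a$.
\item Order the union as $B_{<a}<C_{<a}<a<B_{>a}<C_{>a}$ and take the maps $q_i\cup r_i$. This is an amalgam.
\end{itemize}
Hence the embedding of your $A$ into $A'$ is an amalgamable map. In terms of your construction: in this $A'$ every word fixes $a$, so no word $u$ has $u(a)$ undefined, and the argument never starts. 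The same argument shows that $\seq{F,\id{},\id{}}$ is an amalgamation base for every finite chain $F$. So any witness to the failure of WAP must contain a point moved by $p_1$ or by $p_2$.

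\emph{The closing realizability claim is false.} You assert that order-preservation lets the constraints on $p_1(x)$ and $p_2(x)$ always be met in either relative order. Suppose a fresh $x$ lies between consecutive elements $d_1<d_2$ of $\dom(p'_1)$ and $e_1<e_2$ of $\dom(p'_2)$. Then $p_1(x)\in(p'_1(d_1),p'_1(d_2))$ and $p_2(x)\in(p'_2(e_1),p'_2(e_2))$. Order-preservation is exactly what creates these constraints. If the two intervals are disjoint, the order of $p_1(x)$ and $p_2(x)$ is forced. Moreover, the position of $x=s(a)$ is itself constrained in the same way by the earlier letters of $s$.

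Here is a concrete example. Take $A'=\{a<b<z\}$ with $p'_1=\{a\mapsto b,\ z\mapsto z\}$ and $p'_2=\{a\mapsto z\}$. Then every $x$ with $a<x<z$, in particular every $p_1^k(a)$ with $k\geq 1$, satisfies $p_1(x)<z<p_2(x)$ in every extension.

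Whoever chooses $A'$ can plant many such traps. You give no argument that some choice of $s$, or some comparison among words applied to the base configuration, always escapes all of them. That is precisely the content of Hodkinson's theorem, and it is left as ``a careful choice''. Keep in mind that a correct argument must use the interaction of the two maps in an essential way. For a single map the analogous statement is false: $\Aut(\Q)$ has a generic automorphism.
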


Clearly the category $(\Lin_\fin^0)^{*2}$ is equivalent to $(\Lin_\fin)^{*2}$, therefore $(\Lin_\fin^0)^{*2}$ does not have WAP as well.

\begin{wn} \label{cor:no_generic_pair}
    The class $\Ult_\fin^{*2}$ does not have WAP, and so $\aut(\U)$ does not have a generic pair.
\end{wn}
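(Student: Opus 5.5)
The plan is to transfer the failure of WAP from $(\Lin^0_\fin)^{*2}$ to $\Ult^{*2}_\fin$ along the forgetful functor $D^{*2}\maps \Ult^{*2}_\fin \to (\Lin^0_\fin)^{*2}$, using Proposition~\ref{thm:fin_transfer}(3). That proposition says: if $\Ult^{*2}_\fin$ had WAP and $D^{*2}$ were cofinal and weakly absorbing, then $(\Lin^0_\fin)^{*2}$ would have WAP. By Theorem~\ref{Truss} and the remark following it, the latter is false, so $\Ult^{*2}_\fin$ cannot have WAP. The second claim, that $\aut(\U)$ has no generic pair, then follows from Theorem~\ref{thm:KR} with $n = 2$, since $\Ult_\fin$ is Fraïssé with limit $\U$ by Theorem~\ref{thm:U_Fraisse}.

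It remains to check the two properties of $D^{*2}$. In fact I would show that $D^{*2}$ is wide and star-surjective, which is stronger. This is the analogue for pairs of what Observation~\ref{obs:liftings} gives for a single partial automorphism.

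\emph{Wideness.} Given $\seq{E, \phi_1, \phi_2} \in (\Lin^0_\fin)^{*2}$, take the empty space $\seq{\emptyset, E}$ with empty point-parts of both partial automorphisms and distance parts $\phi_1, \phi_2$. This is legitimate because the distance sort of a two-sorted ultrametric space need not be realized by points. Alternatively, if one insists on nonempty point sets, take a one-point space; all distances are still allowed to be unused.

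\emph{Star-surjectivity.} Let $\seq{X, p_1, p_2}$ be given, with distance parts $\seq{D_X, D_{p_1}, D_{p_2}}$, and let $g\maps \seq{D_X, D_{p_1}, D_{p_2}} \to \seq{E, \phi_1, \phi_2}$ be a map. I use the canonical lifting $\bar g\maps \seq{X, D_X} \to \seq{X, E}$, where the new distance is $g \cmp d_X$. I let the point parts of the lifted partial automorphisms still be $p_1, p_2$, and take their distance parts to be $\phi_1, \phi_2$ restricted to whatever domains are required. The one thing to verify is that $p_i$, together with $\phi_i$, is still a partial dc-automorphism of $\seq{X, E}$. This holds because for $x, y \in \dom(p_i)$,
\[
\phi_i(g(d(x,y))) = g(D_{p_i}(d(x,y))) = g(d(p_i x, p_i y)),
\]
using that $g$ commutes with $D_{p_i}$ on $\dom(D_{p_i})$. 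With this, $\bar g$ is a $\Ult^{*2}$-map whose $D$-image is exactly $g$.

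This verification is the only step where anything needs checking, and it is routine. Hence $D^{*2}$ is wide and star-surjective, so in particular cofinal and weakly absorbing, and the transfer argument above applies.
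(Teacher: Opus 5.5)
Your proposal is correct and follows the paper's proof essentially verbatim: you contrapose Proposition~\ref{thm:fin_transfer}(3) along the forgetful functor $(D_\fin)^{*2}$, which is wide (via $\seq{\emptyset, E}$) and star-surjective (via the canonical liftings of Observation~\ref{obs:liftings}), and then invoke Theorem~\ref{Truss} and Theorem~\ref{thm:KR}. One small precision: in the lifting, take the distance parts of the lifted partial automorphisms to be exactly $\phi_i$, with $\dom(\phi_i)$ and $\rng(\phi_i)$ as the distance sorts of the domain and range, rather than restrictions of $\phi_i$. This is allowed since distances need not be realized, and it is what makes the $D$-image of the lifted object exactly $\seq{E, \phi_1, \phi_2}$.
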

\begin{proof}
    We will use the contraposition of the transfer principle from Proposition~\ref{thm:fin_transfer}(3) applied to the forgetful functor $(D_\fin)^{*2}\maps (\Ult_\fin)^{*2} \to (\Lin^0_\fin)^{*2}$.
    Observation~\ref{obs:liftings} implies that the functor $(D_\fin)^{*2}$ is star-surjective.
    Clearly, for every $\seq{E, \phi_1, \phi_2} \in (\Lin^0)^{*2}$ we may consider $X = \seq{\emptyset, E}$ and $p_i = \seq{\emptyset, \phi_i}$ for $i = 1, 2$, showing that $(D_\fin)^{*2}$ is wide.
    Hence, $(\Ult_\fin)^{*2}$ does not have WAP, since by Theorem~\ref{Truss}, $(\Lin^0_\fin)^{*2}$ does not have WAP.
    Therefore, there is no generic pair of dc-automorphisms of $\U$ by Theorem~\ref{thm:KR}
\end{proof}

In the previous paper~\cite[Section~3.3]{paper1} we studied the category $\UltConv_\fin$ of finite convexly ordered ultrametric spaces.
We showed that it is Fraïssé and that its limit $\U^\prec$ is the rational Urysohn ultrametric space $\U$ endowed with a generic convex order~\cite[Theorem~3.16]{paper1}.
The distance functor $(D^\prec_\fin)^{*2}\maps (\UltConv_\fin)^{*2} \to (\Lin^0)^{*2}$ is again wide and star-surjective by an analogous argument, and so we obtain the following.

\begin{wn} \label{cor:no_generic_pair_convex}
    The class $(\UltConv_\fin)^{*2}$ does not have WAP, and so $\aut(\U^\prec)$ does not have a generic pair.
\end{wn}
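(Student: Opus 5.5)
The plan is to copy the proof of Corollary~\ref{cor:no_generic_pair} verbatim, replacing $\Ult$ by $\UltConv$. We use the contrapositive of Proposition~\ref{thm:fin_transfer}(3) for the forgetful functor $(D^\prec_\fin)^{*2}\maps (\UltConv_\fin)^{*2} \to (\Lin^0_\fin)^{*2}$, sending $\seq{X, \prec, p_1, p_2}$ to $\seq{D_X, D_{p_1}, D_{p_2}}$. If this functor is wide and star-surjective, hence cofinal and weakly absorbing, then WAP for $(\UltConv_\fin)^{*2}$ would transfer to $(\Lin^0_\fin)^{*2}$. That is impossible by Theorem~\ref{Truss}, since $(\Lin^0_\fin)^{*2}$ is equivalent to $(\Lin_\fin)^{*2}$. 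The non-existence of a generic pair of automorphisms of $\U^\prec$ then follows from Theorem~\ref{thm:KR}, because $\UltConv_\fin$ is Fraïssé with limit $\U^\prec$ by \cite[Theorem~3.16]{paper1}.

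Wideness is immediate. Given $\seq{E, \phi_1, \phi_2}$, we take the space with empty point set, distance set $E$, the empty order, and $p_i = \seq{\emptyset, \phi_i}$. The empty space is trivially convexly ordered, so it is an object of $\UltConv_\fin$. If the paper's convention for $\UltConv$ forbids an empty point sort, we instead take a single point, which is also trivially convexly ordered, with empty point parts of the $p_i$.

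For star-surjectivity we adapt Observation~\ref{obs:liftings}. Let $\seq{X, \prec, p_1, p_2}$ be given together with a $(\Lin^0)^{*2}$-map $g\maps \seq{D_X, D_{p_1}, D_{p_2}} \to \seq{E, \phi_1, \phi_2}$. We form the canonical lifting $\seq{X, \prec, p_1, p_2, E, \phi_1, \phi_2}$. This keeps the same points and the same order $\prec$, sets $d'(x,y) = g(d(x,y))$, and has point parts of the $p_i$ unchanged with distance parts $\phi_i$.

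There are three things to verify. First, $d'$ is an ultrametric, because $g$ is an injective order embedding preserving $0$. Second, the $p_i$ remain partial dc-automorphisms: the new distance part is $\phi_i$, and $\phi_i \cmp g = g \cmp D_{p_i}$ on the relevant distances. Third, the identity on points together with $g$ is a $\UltConv$-embedding whose $D$-image is exactly $g$.

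The only genuinely new point, and the main thing to check, is that $\prec$ stays convex with respect to $d'$. Convexity says that every ball is $\prec$-convex, and the balls of $d'$ are exactly the balls of $d$ because $g$ is an order embedding. Indeed, $d'(x,y) < g(s)$ iff $d(x,y) < s$, and every relevant radius is comparable through $g$. So the family of sets to be checked for convexity is unchanged, and the lifting stays in $\UltConv_\fin$.

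The lifted object uses finitely many points but the distance set $E$, which is finite since we are in $\Lin^0_\fin$. So we stay in the finite category, and compositionality of liftings gives star-surjectivity.
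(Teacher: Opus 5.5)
Your proposal is correct and follows the paper's route: the paper also applies the contrapositive of Proposition~\ref{thm:fin_transfer}(3) to the distance functor $(D^\prec_\fin)^{*2}$, noting only that it is wide and star-surjective ``by an analogous argument'' to Corollary~\ref{cor:no_generic_pair}. You spell out that convexity of $\prec$ survives the canonical lifting, and that check is sound. One small precision: balls of $d'$ whose radii lie outside $g[D_X]$ are balls of $d$ or the whole space, rather than ``exactly'' balls of $d$, and the whole space is convex anyway.
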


\begin{question} \label{que:generic_auto_convex}
Does  $\Aut(\U^\prec)$ have a generic dc-automorphism?
\end{question}

We believe that our proof for $\Aut(\U)$  should extend to $\Aut(\U^\prec)$.


\end{document}